 \newcommand{\preamble}{
   \keywords{Fano manifolds, mirror symmetry, quantum differential equations, Picard--Fuchs equations, quiver flag varieties}
   \email{e.kalashnikov14@imperial.ac.uk}
   \maketitle
 }
\newsavebox{\cimatrixbox}
\newlength{\cimatrixheight}
\newlength{\cimatrixshift}
\newcommand{\cimatrix}[1]{%
  \tiny
    \begin{tabular}{c}%
    \savebox{\cimatrixbox}{#1}%
    \setlength{\cimatrixheight}{\totalheightof{\usebox{\cimatrixbox}}}%
    \setlength{\cimatrixshift}{0.5\cimatrixheight}%
    \addtolength{\cimatrixshift}{-0.5ex}%
       \raisebox{\cimatrixshift}{\usebox{\cimatrixbox}}%
     \addtolength{\cimatrixheight}{2.9ex}%
     \rule[-1.45ex]{0pt}{\cimatrixheight}%
     \end{tabular}%
 }
\newcommand{\evnrow}{\rowcolor[gray]{0.95}}
\newcommand{\oddrow}{}
\newcommand{\Obro}[2]{\text{\rm B\O S}^{#1}_{#2}} 
\newcommand{\MM}[2]{\mathrm{MM}^3_{#1\text{--}#2}} 
\newcommand{\BB}[2]{B^{#1}_{#2}}
\newcommand{\VV}[2]{V^{#1}_{#2}}
\newcommand{\MW}[2]{\mathrm{MW}^{#1}_{#2}}
\renewcommand{\SS}[2]{S^2_{#2}}
\newcommand{\FI}[2]{\mathrm{FI}^{#1}_{#2}}
\DeclareMathOperator{\CKP}{CKP}
\DeclareMathOperator{\Str}{Str}
\newtheorem{thm}{\bf Theorem}[section]
\newtheorem{eg}[thm]{\bf Example}
\newtheorem{prop}[thm]{\bf Proposition}
\newtheorem{cor}[thm]{\bf Corollary}
\newtheorem{rem}[thm]{\bf Remark}
\newtheorem{mydef}[thm]{\bf Definition}
\newcommand{\C}{\mathbb{C}}
\newcommand{\R}{\mathbb{R}}
\newcommand{\Z}{\mathbb{Z}}
\newcommand{\Q}{\mathbb{Q}}
\newcommand{\N}{\mathbb{N}}
\newcommand{\PP}{\mathbb{P}}
\newcommand{\Cstar}{\C^*}
\newcommand{\br}{\mathbf{r}}
\newcommand{\be}{\mathbf{e}}
\newcommand{\one}{\mathbf{1}}
\newcommand{\numberofnewFanos}{$141$}
\DeclareMathOperator{\Rep}{\mathrm{Rep}}
\DeclareMathOperator{\Spec}{\mathrm{Spec}}
\DeclareMathOperator{\Hom}{\mathrm{Hom}}
\DeclareMathOperator{\GL}{\mathrm{GL}}
\DeclareMathOperator{\Fl}{\mathrm{Fl}}
\DeclareMathOperator{\OG}{\mathrm{OGr}}
\DeclareMathOperator{\Gr}{\mathrm{Gr}}
\DeclareMathOperator{\Sym}{\mathrm{Sym}}
\DeclareMathOperator{\rk}{\mathrm{rank}}
\DeclareMathOperator{\Cox}{\mathrm{Cox}}
\DeclareMathOperator{\NE}{\mathrm{NE}}
\DeclareMathOperator{\Pic}{\mathrm{Pic}}
\DeclareMathOperator{\Amp}{\mathrm{Amp}}
\DeclareMathOperator{\Nef}{\mathrm{Nef}}
\DeclareMathOperator{\Wall}{\mathrm{Wall}}
\DeclareMathOperator{\Irr}{\mathrm{Irr}}
\DeclareMathOperator{\image}{\mathrm{Im}}
\newcommand{\ab}{\text{\rm ab}}
\renewcommand{\emptyset}{\varnothing}
\newcommand{\intrinsic}[1]{\texttt{\upshape#1}}
\begin{document}

\title{Four-dimensional Fano Quiver Flag Zero Loci}

\author{Elana Kalashnikov}

\address{Department of Mathematics, Imperial College London, 180 Queen's Gate, London SW7 2AZ, UK}

\preamble

\begin{abstract}
  Quiver flag zero loci are subvarieties of quiver flag varieties cut out by sections of homogeneous vector bundles.  We prove the Abelian/non-Abelian Correspondence in this context: this allows us to compute genus zero Gromov--Witten invariants of quiver flag zero loci.  We determine the ample cone of a quiver flag variety, disproving a conjecture of Craw.    In the Appendices, which are joint work with Tom Coates and Alexander Kasprzyk, we use these results to find four-dimensional Fano manifolds that occur as quiver flag zero loci in ambient spaces of dimension up to 8, and compute their quantum periods.  In this way we find at least \numberofnewFanos ~new four-dimensional Fano manifolds.
\end{abstract}

\maketitle

\section{Introduction}

In this paper, we consider quiver flag zero loci: smooth projective algebraic varieties which occur as zero loci of sections of homogeneous vector bundles on quiver flag varieties.  We prove the Abelian/non-Abelian Correspondence of Ciocan-Fontanine--Kim--Sabbah in this context, which allows us to compute genus zero Gromov--Witten invariants of quiver flag zero loci\footnote{Another proof of this, using different methods, has recently been given by Rachel Webb~\cite{Webb2018}.}.  We also determine the ample cone of a quiver flag variety, disproving a conjecture of Craw.  Our primary motivation for these results is as follows.  There has been much recent interest in the possibility of classifying Fano manifolds using Mirror Symmetry.  It is conjectured that, under Mirror Symmetry, $n$-dimensional Fano manifolds should correspond to certain very special Laurent polynomials in $n$~variables~\cite{CoatesCortiGalkinGolyshevKasprzyk2013}.  This conjecture has been established in dimensions up to three~\cite{CoatesCortiGalkinKasprzyk2016}, where the classification of Fano manifolds is  known~\cite{Iskovskih1977,Iskovskih1978,Iskovskih1979,MoriMukai1982,MoriMukai1983,MoriMukai1986,MoriMukai2003,MoriMukai2004}.  Little is known about the classification of four-dimensional Fano manifolds, but there is strong evidence that the conjecture holds for four-dimensional toric complete intersections~\cite{CoatesKasprzykPrince2015}.  Not every Fano manifold is a toric complete intersection, but the constructions in~\cite{CoatesCortiGalkinKasprzyk2016} show that every Fano manifold of dimension at most three is either a toric complete intersection or a quiver flag zero locus.  One might hope, therefore, that most four-dimensional Fano manifolds are either toric complete intersections or quiver flag zero loci.

In the Appendices, which are joint work with Tom Coates and Alexander Kasprzyk, we use the structure theory developed here to find four-dimensional Fano manifolds that occur as quiver flag zero loci in ambient spaces of dimension up to 8, and compute their quantum periods. \numberofnewFanos ~of these quantum periods were previously unknown.  Thus we find at least \numberofnewFanos ~new four-dimensional Fano manifolds. This computation is described in Appendix~\ref{sec:search}.  The quantum periods, and quiver flag zero loci that give rise to them, are recorded in Appendix~\ref{results}.   Figure~\ref{eulerdegree} overleaf shows the distribution of degree and Euler number for the four-dimensional quiver flag zero loci that we found, and for four-dimensional Fano toric complete intersections.

\begin{landscape}
  \begin{figure}[htbp]
    \centering
    \includegraphics[width=0.78\textheight]{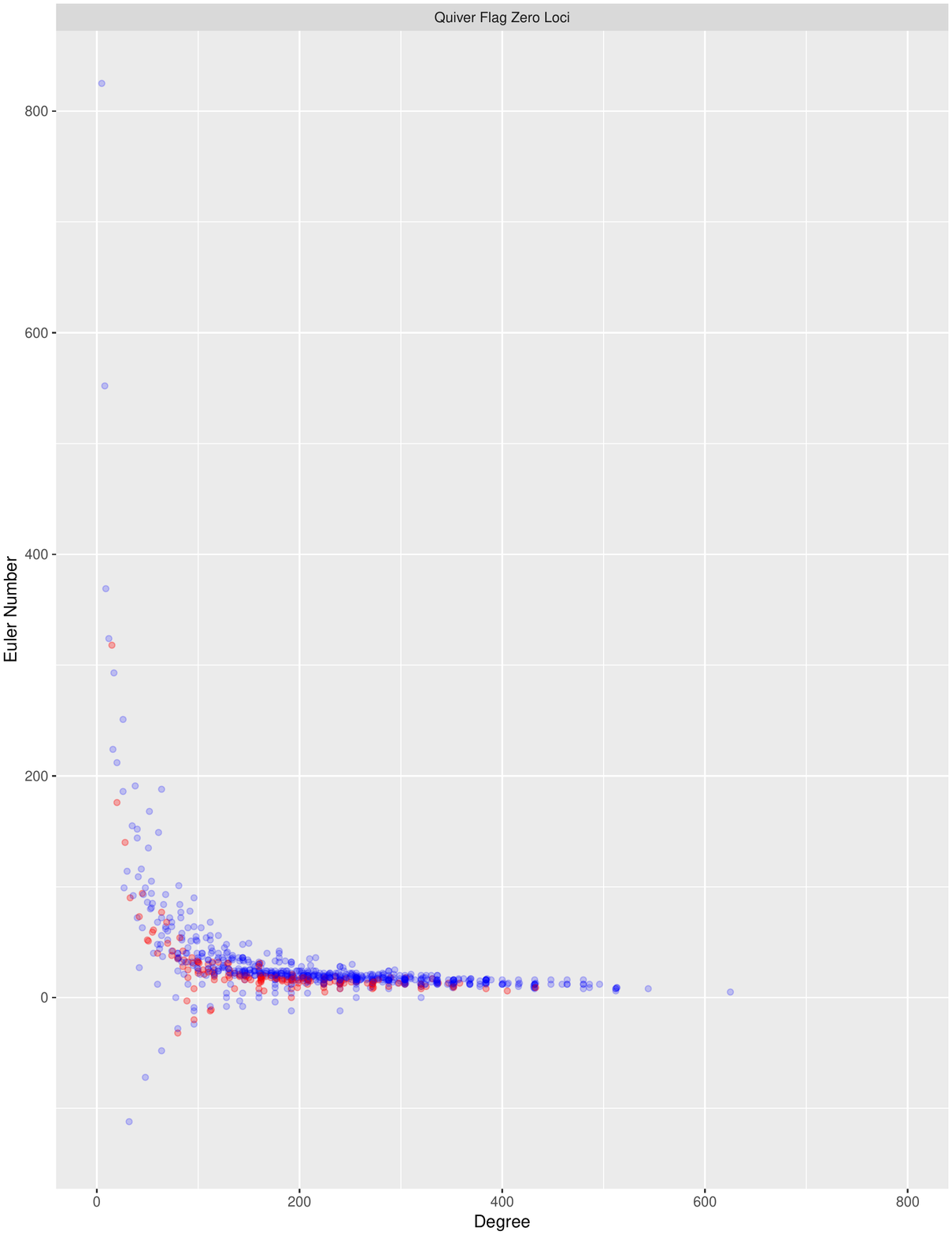}
    \includegraphics[width=0.78\textheight]{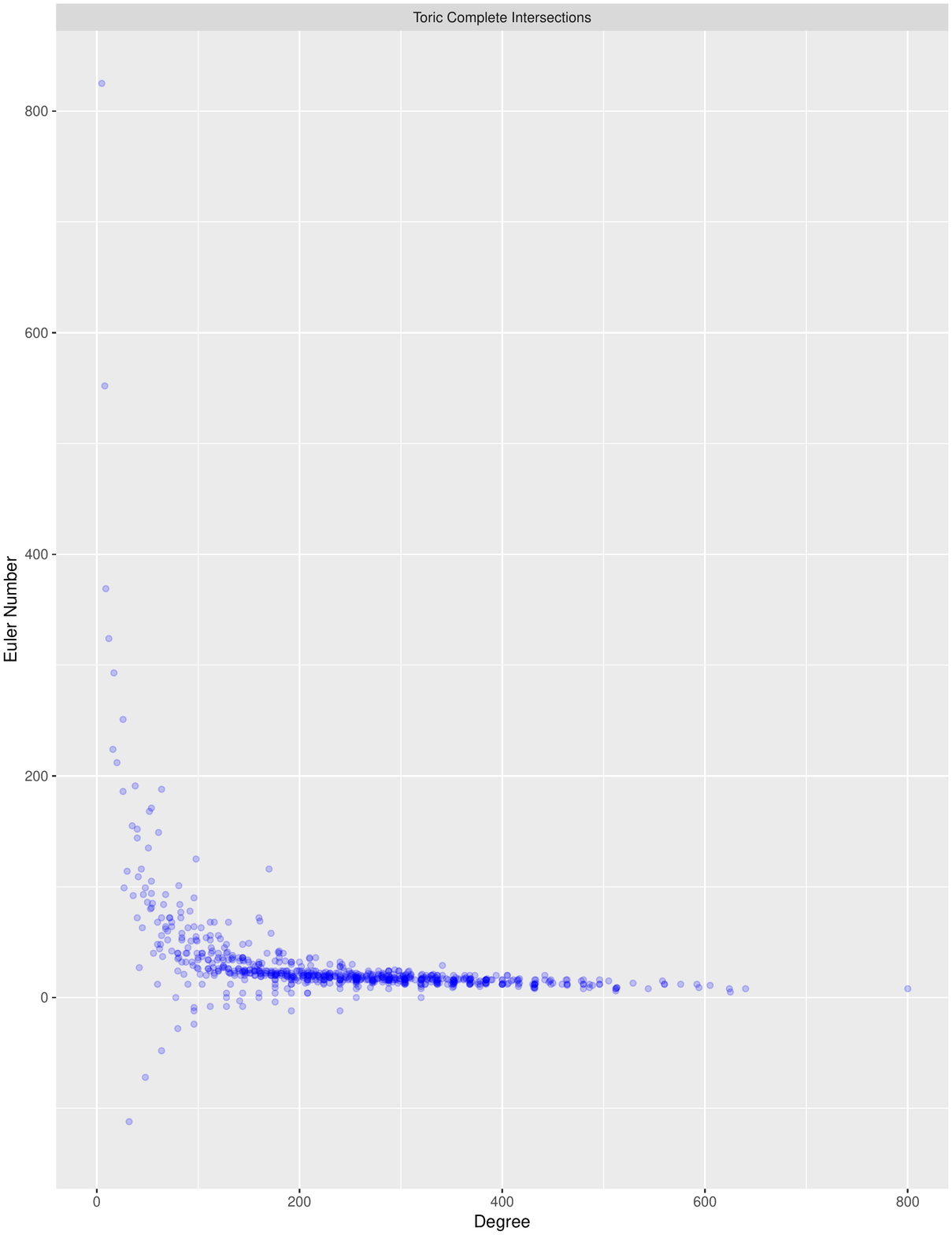}
    \caption{Degrees and Euler numbers for four-dimensional Fano quiver flag zero loci and toric complete intersections; cf.~\cite[Figure~5]{CoatesKasprzykPrince2015}.  Quiver flag zero loci that are not toric complete intersections are highlighted in red.} \label{eulerdegree}
  \end{figure}
\end{landscape}

\section{Quiver flag varieties}
\label{sec:quiver flag varieties}

Quiver flag varieties, which were introduced by Craw~\cite{Craw2011}, are generalizations of Grassmannians and type A flag varieties.  Like flag varieties, they are GIT quotients and fine moduli spaces.  We begin by recalling Craw's construction.  A quiver flag variety $M(Q,\br)$ is determined by a quiver $Q$ and a dimension vector $\br$.  The quiver $Q$ is assumed to be finite and acyclic, with a unique source.  Let $Q_0 = \{0,1,\ldots,\rho\}$ denote the set of vertices of $Q$ and let $Q_1$ denote the set of arrows.  Without loss of generality, after reordering the vertices if necessary, we may assume that $0 \in Q_0$ is the unique source and that the number $n_{ij}$ of arrows from vertex $i$ to vertex $j$ is zero unless $i<j$.  Write $s$,~$t :  Q_1 \to Q_0$ for the source and target maps, so that an arrow $a \in Q_1$ goes from $s(a)$ to $t(a)$.  The dimension vector $\br = (r_0,\ldots,r_\rho)$ lies in $\N^{\rho+1}$, and we insist that $r_0 = 1$. $M(Q,\br)$ is defined to be the moduli space of $\theta$-stable representations of  the quiver $Q$ with dimension vector $\br$. 

\subsection{Quiver flag varieties as GIT quotients.}
Consider the vector space
\[
\Rep(Q,\br) =\bigoplus_{a \in Q_1}\Hom(\C^{r_{s(a)}},\C^{r_{t(a)}})
\]
and the action of $\GL(\mathbf{r}):=\prod_{i=0}^\rho \GL(r_i)$ on $\Rep(Q,\br)$ by change of basis.  The diagonal copy of $\GL(1)$ in $\GL(\br)$ acts trivially, but the quotient $G := \GL(\br)/\GL(1)$ acts effectively; since $r_0 = 1$, we may identify $G$ with $\prod_{i=1}^\rho \GL(r_i)$.  The quiver flag variety $M(Q,\br)$ is the GIT quotient $\Rep(Q,\br)/\!\!/_\theta\, G$, where the stability condition $\theta$ is the character of $G$ given by 
\begin{align*}
  \theta(g) = \prod_{i=1}^\rho \det(g_i), && g = (g_1,\ldots,g_\rho) \in \prod_{i=1}^\rho \GL(r_i).
\end{align*}
 For the stability condition $\theta$, all semistable points are stable.
 To identify the $\theta$-stable points in $\Rep(Q,\br)$, set $s_i=\sum_{a \in Q_1,t(a)=i} r_{s(a)}$ and write
\[
\Rep(Q,\br)=\bigoplus_{i=1}^\rho \Hom(\C^{s_i},\C^{r_i}).
\]
Then $w=(w_i)_{i=1}^\rho$ is $\theta$-stable if and only if $w_i$ is surjective for all $i$. 
\begin{eg} \label{eg:Grassmannian}
Consider the quiver $Q$ given by 
\begin{center}
  \includegraphics[scale=0.5]{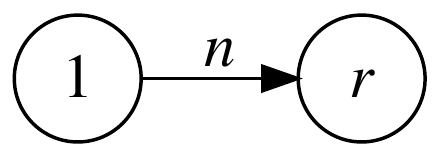}
\end{center}
so that $\rho = 1$, $n_{01} = n$, and the dimension vector $\br = (1, r)$.  Then $\Rep(Q,\br)=\Hom(\C^n,\C^r)$, and the $\theta$-stable points are surjections $\C^n \rightarrow \C^r$. The group $G$ acts by change of basis, and therefore $M(Q,\br) = \Gr(n,r)$, the Grassmannian of $r$-dimensional quotients of $\C^n$.  More generally, the quiver 
\begin{center}
  \includegraphics[scale=0.5]{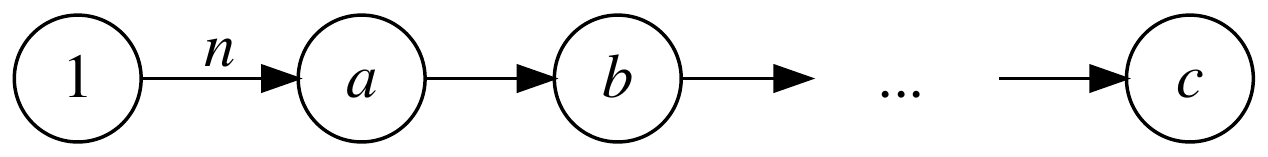}  
\end{center}
gives the flag of quotients $\Fl(n,a,b,\dots,c)$. 
\end{eg}

Quiver flag varieties are non-Abelian GIT quotients unless the dimension vector $\br = (1,1,\ldots,1)$.  In this case $G \cong \prod_{i=1}^\rho \GL_1(\C)$ is Abelian, and $M(Q;\br)$ is a toric variety.  We call such $M(Q,\br)$ toric quiver flag varieties.

\subsection{Quiver flag varieties as moduli spaces.}
\label{sec: moduli spaces}

To give a morphism to $M(Q,\br)$ from a scheme $B$ is the same as to give:
\begin{itemize}[topsep=0.1ex, itemsep=0pt,parsep=0pt]
\item globally generated vector bundles $W_i \to B$, $i \in Q_0$, of rank $r_i$ such that $W_0 = \mathcal{O}_B$; and
\item morphisms $W_{s(a)} \to W_{t(a)}$, $a \in Q_1$ satisfying the $\theta$-stability condition
\end{itemize}
up to isomorphism.  Thus $M(Q,\br)$ carries universal bundles $W_i$, $i \in Q_0$. It is also a Mori Dream Space. The GIT description gives an isomorphism between the Picard group of $M(Q,\br)$ and the character group $\chi(G) \cong \Z^\rho$ of $G$.  When tensored with $\Q$, the fact that this is a Mori Dream space implies that this isomorphism induces an isomorphism of wall and chamber structures given by the Mori structure (on the effective cone) and the GIT structure (on $\chi(G) \otimes \Q$); in particular, the GIT chamber containing $\theta$ is the ample cone of $M(Q,\br)$. The Picard group is generated by the determinant line bundles $\det(W_i)$, $i \in Q_0$. 

\subsection{Quiver flag varieties as towers of Grassmannian bundles.}
\label{sec: tower of Grassmannian bundles}

Generalizing Example~\ref{eg:Grassmannian}, all quiver flag varieties are towers of Grassmannian bundles~\cite[Theorem~3.3]{Craw2011}.
 For $0 \leq i \leq \rho$, let $Q(i)$ be the subquiver of $Q$ obtained by removing the vertices $j \in Q_0$, $j>i$, and all arrows attached to them. Let $\br(i)=(1,r_1, \dots,r_i)$,  and write $Y_i=M(Q(i),\br(i))$.  Denote the universal bundle $W_j \to Y_i$ by $W_j^{(i)}$.
Then there are maps
\[
M(Q,\br)=Y_\rho \to Y_{\rho-1} \to \cdots \to Y_1 \to Y_0=\Spec \C,
\]
induced by isomorphisms $Y_i \cong \Gr(\mathcal{F}_i,r_i)$, where $\mathcal{F}_i$ is the locally free sheaf
\[
\mathcal{F}_i=\bigoplus_{a \in Q_1,t(a)=i}W_{s(a)}^{(i-1)}
\]
of rank $s_i$ on $Y_{i-1}$.  This makes clear that  $M(Q,\br)$ is a smooth projective variety of dimension $\sum_{i=1}^\rho r_i(s_i-r_i)$, and that $W_i$ is the pullback to $Y_\rho$ of the tautological quotient bundle  over $\Gr(\mathcal{F}_i,r_i)$. Thus $W_i$ is globally generated, and $\det(W_i)$ is nef.  Furthermore the anticanonical line bundle of $M(Q,\br)$ is 
\[
\bigotimes_{a \in Q_1}\text{det}(W_{t(a)})^{r_{s(a)}} \otimes \text{det}(W_{s(a)})^{-r_{t(a)}}.
\]
In particular, $M(Q,\br)$ is Fano if $s_i >s_i':=\sum_{a \in Q_1,s(a)=i} r_{t(a)}.$  This condition is not if and only if.


\subsection{Quiver flag zero loci}  

We have expressed the quiver flag variety $M(Q,\br)$ as the quotient by $G$ of the semistable locus $\Rep(Q,\br)^{ss} \subset \Rep(Q,\br)$.  A representation $E$ of $G$, therefore, defines a vector bundle $E_G \to M(Q,\br)$ with fiber $E$; here $E_G = E \times_G \Rep(Q,\br)^{ss}$.  In the second half of this paper, we will study subvarieties of quiver flag varieties cut out by regular sections of such bundles.  We refer to such subvarieties as \emph{quiver flag zero loci}, and such bundles as homogeneous bundles. Quiver flag varieties are also natural ambient spaces via the multi-graded Pl\"ucker embedding (\cite{Craw2011}, \cite{Craw2018}). 

The representation theory of $G = \prod_{i=1}^\rho \GL(r_i)$ is well-understood, and we can use this to write down the bundles $E_G$ explicitly.  Irreducible polynomial representations of $\GL(r)$ are indexed by partitions (or Young diagrams) of length at most $r$.  The irreducible representation corresponding to a partition $\alpha$ is the Schur power $S^\alpha \C^r$ of the standard representation of $\GL(r)$~\cite[chapter~8]{Fulton1997}.  For example, if $\alpha$ is the partition $(k)$ then $S^\alpha \C^r = \Sym^k \C^r$, the $k$th symmetric power, and if $\alpha$ is the partition $(1,1,\ldots,1)$ of length $k$ then $S^\alpha \C^r = \bigwedge^k \C^r$, the $k$th exterior power.  Irreducible polynomial representations of $G$ are therefore indexed by tuples $(\alpha_1,\ldots,\alpha_\rho)$ of partitions, where $\alpha_i$ has length at most $r_i$. The tautological bundles on a quiver flag variety are homogenous: if $E=\C^{r_i}$ is the standard representation of the $i^{th}$ factor of $G$, then $W_i=E_G$. More generally, the representation indexed by $(\alpha_1,\ldots,\alpha_\rho)$ is $\bigotimes_{i=1}^\rho S^{\alpha_i} \C^{r_i}$, and the corresponding vector bundle on $M(Q,\br)$ is $\bigotimes_{i=1}^\rho S^{\alpha_i} W_i$. 

\begin{eg} Consider the vector bundle $\Sym^2 W_1$ on $\Gr(8,3)$. By duality -- which sends a quotient $\C^8 \to V \to 0$ to a subspace $0 \to V^* \to (\C^8)^*$ --  this is equivalent to considering the vector bundle $\Sym^2 S_1^*$ on the Grassmannian of 3-dimensional subspaces of $(\C^8)^*$, where $S_1$ is the tautological sub-bundle. A generic symmetric 2-form $\omega$ on $(\C^8)^*$ determines a regular section of $\Sym^2 S_1^*$, which vanishes at a point $V^*$ if and only if the restriction of $\omega$ to $V^*$ is identically zero. So the associated quiver flag zero locus is the orthogonal Grassmannian  $\OG(3,8)$.
\end{eg}

\subsection{The Euler sequence}

Quiver flag varieties, like both Grassmannians and toric varieties, have an Euler sequence.

\begin{prop} \label{euler}
  Let $X = M(Q,\br)$ be a quiver flag variety, and for $a \in Q_1$, denote $W_a:=W_{s(a)}^* \otimes W_{t(a)}.$ There is a short exact sequence
  \[
    0 \to \bigoplus_{i=1}^\rho W_i \otimes W_i^* \to \bigoplus_{a \in Q_1} W_a \to T_X \to 0.
  \]
\end{prop}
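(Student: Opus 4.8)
The plan is to obtain the sequence by descending the relative tangent sequence of the quotient map along the GIT presentation $X = U/G$, where $U := \Rep(Q,\br)^{ss}$ and $G = \prod_{i=1}^\rho \GL(r_i)$. First I would check that $\pi\colon U \to X$ is a principal $G$-bundle: for the stability condition $\theta$ all semistable points are stable, and the only element of $G$ acting trivially on $\Rep(Q,\br)$ is the identity (we have already divided $\GL(\br)$ by the trivially-acting diagonal $\GL(1)$), so every point of $U$ has trivial $G$-stabilizer; hence the action is free and $\pi$ is a principal bundle.

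Next I would identify the three terms via the associated-bundle construction $E \mapsto E_G$. Since $U$ is open in the vector space $\Rep(Q,\br)$, the bundle $T_U$ is the $G$-equivariantly trivial bundle with fibre the representation $\Rep(Q,\br) = \bigoplus_{a \in Q_1}(\C^{r_{s(a)}})^* \otimes \C^{r_{t(a)}}$; using that the standard representation $\C^{r_i}$ of the $i$th factor of $G$ has $(\C^{r_i})_G = W_i$, it descends along $\pi$ to $\bigoplus_{a\in Q_1} W_{s(a)}^* \otimes W_{t(a)} = \bigoplus_{a \in Q_1} W_a$. The vertical subbundle $T_{U/X} = \ker(d\pi)$ is, by freeness of the action, identified through the infinitesimal action with the equivariantly trivial bundle with fibre $\mathfrak{g} = \bigoplus_{i=1}^\rho \mathfrak{gl}(r_i)$ carrying the adjoint action, and this descends to the adjoint bundle $\bigoplus_{i=1}^\rho W_i \otimes W_i^*$. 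Descent of $G$-equivariant bundles along the principal bundle $\pi$ is exact, so the equivariant sequence $0 \to T_{U/X} \to T_U \to \pi^* T_X \to 0$ descends to the asserted sequence on $X$; the first map is the descended infinitesimal action $(f_i)_{i=1}^\rho \mapsto \bigl(f_{t(a)} \circ \varphi_a - \varphi_a \circ f_{s(a)}\bigr)_{a \in Q_1}$, written using the universal maps $\varphi_a \colon W_{s(a)} \to W_{t(a)}$ and the convention $f_0 = 0$.

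The point that needs care is that the cokernel is genuinely $T_X$, not merely equal to it in $K$-theory. Injectivity of the first map is not the obstacle: it is fibrewise injective because over a stable representation $E$ any kernel element is an endomorphism of $E$ vanishing on $E_0 = \C$, hence zero since $\Hom(E,E) = \C$ by stability; so the cokernel is automatically a vector bundle of rank $\dim X$. Identifying that cokernel with $T_X$ is exactly what descent of the tangent sequence provides --- equivalently, it amounts to exhibiting the Kodaira--Spencer surjection $\bigoplus_{a \in Q_1} W_a \to T_X$ taking a first-order deformation of the $\varphi_a$ to the corresponding tangent vector of the moduli space. I would also remark that the tower of Grassmannian bundles $X = Y_\rho \to \cdots \to Y_0$ gives a consistency check but not, by itself, the statement: level $i$ contributes the relative Euler sequence $0 \to W_i \otimes W_i^* \to \mathcal{F}_i^* \otimes W_i \to T_{Y_i/Y_{i-1}} \to 0$ with $\mathcal{F}_i^* \otimes W_i = \bigoplus_{a \in Q_1,\, t(a) = i} W_a$, and summing these recovers the asserted identity in $K$-theory, but assembling them into one short exact sequence meets a lifting obstruction (a class in $\mathrm{Ext}^1$) along the tower, which is why arguing via the free quotient is cleaner.
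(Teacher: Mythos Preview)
Your GIT-descent argument is correct and takes a genuinely different route from the paper. The paper proceeds by induction on $\rho$ via the tower of Grassmannian bundles: the base case is the classical Euler sequence on $\Gr(n,r)$, and the inductive step combines the relative Euler sequence for $\pi\colon Y_\rho \to Y_{\rho-1}$ with the pullback of the sequence on $Y_{\rho-1}$, asserting $T_X = \pi^* T_{Y_{\rho-1}} \oplus (S^* \otimes W_\rho)$ along the way. It is striking that the approach you explicitly set aside as potentially obstructed is exactly the one the paper adopts; your $\mathrm{Ext}^1$ concern is legitimate in general---the relative tangent sequence of a Grassmannian bundle need not split---and the direct-sum step is where the paper is terse. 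Read charitably, the tower argument goes through via a horseshoe/nine-lemma once one has the global surjection $\bigoplus_{a} W_a \to T_X$, and your descent construction furnishes exactly that map, so your proof can be viewed both as an independent one-step argument and as supplying what the inductive approach tacitly needs. The tower proof has the merit of staying close to the Grassmannian-bundle structure used throughout the paper; yours has the merit of producing the sequence, with its maps made explicit, in a single stroke.

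One small wobble in your exposition: the inference ``$G$ acts effectively on $\Rep(Q,\br)$, hence freely on $U$'' is a non-sequitur as written---effective does not imply free. The correct reason, which you in fact invoke two paragraphs later, is that a $\theta$-stable representation is simple, so its automorphism group in $\GL(\br)$ is the diagonal $\GL(1)$ and hence trivial in $G = \GL(\br)/\GL(1)$.
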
 

\begin{proof}
  We proceed by induction on the Picard rank $\rho$ of $X$.  If $\rho=1$ then this is the usual Euler sequence for the Grassmannian.
  Suppose that the proposition holds for quiver flag varieties of Picard rank $\rho-1$, $\rho>1$. Then the fibration $\pi \colon \Gr(\pi^*\mathcal{F}_\rho, r_\rho) \to Y_{\rho-1}$ from \S\ref*{sec:quiver flag varieties}\ref{sec: tower of Grassmannian bundles} above gives a short exact sequence
\[
0 \to W_\rho \otimes W_\rho^* \to \pi^*\mathcal{F}_\rho^* \otimes W_\rho \to S^* \otimes W_\rho \to 0
\]
where $S$ is the kernel of the projection $\pi^* \mathcal{F}_\rho \to W_\rho$. Note that
\begin{align*}
 \pi^*\mathcal{F}_\rho^* \otimes W_\rho=\bigoplus_{a \in Q_1, t(a)=\rho} W_a
  && \text{and that} && 
  T_X=T_{Y_{\rho-1}} \oplus S^* \otimes W_\rho.
\end{align*}
As $S^* \otimes W_\rho$ is the relative tangent bundle to $\pi$, the proposition follows by induction.
\end{proof}

If $X$ is a quiver flag zero locus cut out of the quiver flag variety $M(Q,\br)$ by a regular section of the homogeneous vector bundle $E$ then there is an exact sequence
\[
0 \to T_X \to T_{M(Q,\br)}|_X \to E \to 0.
\]
Thus $T_X$ is the K-theoretic difference of homogeneous vector bundles.

\section{Quiver flag varieties as subvarieties}\label{zero loci}
There are three well-known constructions of flag varieties: as GIT quotients, as homogenous spaces, and as subvarieties of products of Grassmannians. Craw's construction gives quiver flag varieties as GIT quotients. General quiver flag varieties are not homogenous spaces, so the second construction does not generalize. In this section we generalize the third construction of flag varieties, exhibiting quiver flag varieties as subvarieties of products of Grassmannians.  It is this description that will allow us to prove the Abelian/non-Abelian correspondence for quiver flag varieties.

\begin{prop} \label{prop:zl}
Let $M(Q,\br)$ be a quiver flag variety with $\rho>1.$ Let $\tilde{s_i}= \dim H^0(M(Q,\br),W_i)$. Then $M(Q,\br)$ is cut out of $Y=\prod_{i=1}^\rho \Gr(\tilde{s}_i,r_i)$ by a canonical section of 
\[
E=\bigoplus_{a \in Q_1, s(a) \neq 0} S_{s(a)}^* \otimes Q_{t(a)}
\]
where $S_i$ and $Q_i$ are the tautological sub-bundle and quotient bundle on the $i^{th}$ factor of $Y$.
\end{prop}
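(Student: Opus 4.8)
The plan is to construct mutually inverse morphisms $\phi\colon M(Q,\br)\to Z$ and $g\colon Z\to M(Q,\br)$, where $Z\subseteq Y$ is the zero scheme of the canonical section, by playing the fine moduli description of $M(Q,\br)$ from \S\ref{sec: moduli spaces} off against that of the product of Grassmannians $Y$. To construct $\phi$: each $W_i$ is globally generated of rank $r_i$ (\S\ref{sec: tower of Grassmannian bundles}), so the evaluation $\mathrm{ev}_i\colon H^0(W_i)\otimes\mathcal{O}\to W_i$ presents $W_i$ as a rank-$r_i$ quotient of $\mathcal{O}^{\tilde s_i}$, hence defines $\phi_i\colon M(Q,\br)\to\Gr(\tilde s_i,r_i)$ with $\phi_i^\ast Q_i=W_i$ and $\phi_i^\ast S_i=\ker(\mathrm{ev}_i)$; set $\phi=(\phi_1,\dots,\phi_\rho)$. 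For an arrow $a$ write $L_a:=H^0(\psi_a)$ for the induced map on sections; the canonical section $\tau\in H^0(Y,E)$ is the one whose component at a non-source arrow $a$ is the composite $S_{s(a)}\hookrightarrow\mathcal{O}^{\tilde s_{s(a)}}\xrightarrow{L_a}\mathcal{O}^{\tilde s_{t(a)}}\to Q_{t(a)}$ (this is canonical because the $L_a$ for arrows into $i$ assemble to a natural isomorphism $\bigoplus_{t(a)=i}H^0(W_{s(a)})\xrightarrow{\sim}H^0(W_i)$, which identifies $\C^{\tilde s_i}$ with $H^0(W_i)$). Functoriality of evaluation gives $\mathrm{ev}_{t(a)}\circ L_a=\psi_a\circ\mathrm{ev}_{s(a)}$, so pulling this composite back along $\phi$ and restricting to $\phi^\ast S_{s(a)}=\ker(\mathrm{ev}_{s(a)})$ yields $0$; hence $\phi^\ast\tau=0$ and $\phi$ factors through $Z$.

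Next I would build $g$ from the moduli description: on $Z$ take $W_0''=\mathcal{O}$ and $W_i'':=Q_i|_Z$, globally generated of rank $r_i$. For a source arrow $a\colon 0\to i$, let $\psi_a''\colon W_0''\to W_i''$ be the restriction to $Z$ of the section of $Q_i$ corresponding to $L_a(1)\in H^0(Y,Q_i)=\C^{\tilde s_i}$. For a non-source arrow $a$, the condition $\tau|_Z=0$ forces $S_{s(a)}|_Z$ into the kernel of $\mathcal{O}^{\tilde s_{s(a)}}\xrightarrow{L_a}\mathcal{O}^{\tilde s_{t(a)}}\to Q_{t(a)}|_Z$, so that map factors uniquely through $\mathcal{O}^{\tilde s_{s(a)}}\twoheadrightarrow Q_{s(a)}|_Z$ and defines $\psi_a''\colon W_{s(a)}''\to W_{t(a)}''$. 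The one substantive check is $\theta$-stability, i.e.\ surjectivity of $\bigoplus_{t(a)=i}W_{s(a)}''\to W_i''$: precomposing with the canonical surjections onto the $W_{s(a)}''$ turns this into $\mathcal{O}^{\tilde s_i}\xrightarrow{\oplus_a L_a}\mathcal{O}^{\tilde s_i}\twoheadrightarrow Q_i|_Z$, which is surjective because $\bigoplus_{t(a)=i}L_a$ is an isomorphism and precomposing with a surjection does not change the image. That $\bigoplus_{t(a)=i}L_a$ is an isomorphism follows in turn from the vanishing of $H^0$ and $H^1$ of the tautological sub-bundle of the Grassmann bundle $Y_i\to Y_{i-1}$ (Borel--Weil--Bott on the fibres, the Leray spectral sequence, and $R^\bullet\pi_\ast\mathcal{O}=\mathcal{O}$ for the remaining projections in the tower). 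The data $(W_i'',\psi_a'')$ now defines $g\colon Z\to M(Q,\br)$.

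It remains to check the composites. Pulling $(W_i'',\psi_a'')$ back along $\phi$ returns the universal data $(W_i,\psi_a)$ of $M(Q,\br)$: $\phi^\ast W_i''=W_i$, while $\phi^\ast\psi_a''=\psi_a$ for source arrows because $\phi^\ast$ acts as the identity on $\C^{\tilde s_i}=H^0(W_i)$ and $L_a(1)$ is exactly the section $\psi_a$, and for non-source arrows because $\mathrm{ev}_{t(a)}\circ L_a=\psi_a\circ\mathrm{ev}_{s(a)}$ with $\mathrm{ev}_{s(a)}$ surjective. So $g\circ\phi=\mathrm{id}$ by uniqueness of classifying morphisms. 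For $\phi\circ g$ one shows it equals the inclusion $Z\hookrightarrow Y$: since $(\phi\circ g)^\ast(\mathcal{O}^{\tilde s_i}\to Q_i)=g^\ast\mathrm{ev}_i$, it suffices to see that $g^\ast$ carries the $j$-th coordinate section of $W_i$ to the $j$-th canonical section of $Q_i|_Z$, which I would prove by induction on the vertex ordering: it holds for the sections indexed by source arrows by construction of $\psi_a''$, and it propagates along a non-source arrow $a$ because the $j$-th coordinate section of $W_{t(a)}$ equals $L_a$ applied to the $k$-th coordinate section of $W_{s(a)}$, while $\psi_a''$ was built precisely to carry the $k$-th canonical section of $Q_{s(a)}|_Z$ to the $j$-th canonical section of $Q_{t(a)}|_Z$. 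Hence $\phi$ is an isomorphism onto $Z$.

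The part I expect to be fiddliest is this last step --- keeping straight how the distinguished bases of $H^0(M(Q,\br),W_i)$ and of $H^0(Y,Q_i)$ correspond under $\phi$ and $g$; the cohomology vanishing for the tautological sub-bundles on the Grassmann-bundle tower is the only other technical input. As a consistency check, a direct count gives $\dim Y-\rk E=\sum_i r_i n_{0i}-\sum_i r_i^2+\sum_{a\in Q_1,\,s(a)\neq 0}r_{s(a)}r_{t(a)}=\dim M(Q,\br)$, which additionally shows the canonical section is automatically regular.
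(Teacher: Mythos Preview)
Your proof is correct and follows essentially the same approach as the paper: construct the map to $Y$ using global generation of the $W_i$, show it lands in the zero locus, build the inverse via the fine moduli description of $M(Q,\br)$, and check both composites using universality. The paper's exposition differs from yours in two minor organisational points. First, rather than deriving the isomorphism $\bigoplus_{t(a)=i}H^0(W_{s(a)})\xrightarrow{\sim}H^0(W_i)$ via cohomology vanishing on the tower, the paper simply cites Craw's result that $\tilde s_i$ equals the number of paths from $0$ to $i$, which immediately yields the decomposition $\C^{\tilde s_i}=\bigoplus_{t(a)=i,\,s(a)\neq 0}\C^{\tilde s_{s(a)}}\oplus\C^{n_{0i}}$. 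Second, the paper packages the section by first rewriting $E$ as $\bigoplus_{i\ge 2}\Hom(Q_i^*,(\C^{\tilde s_i})^*/F_i^*)$ with $F_i=\bigoplus_{t(a)=i}Q_{s(a)}$, so the canonical section is the composite $Q_i^*\hookrightarrow(\C^{\tilde s_i})^*\twoheadrightarrow(\C^{\tilde s_i})^*/F_i^*$; its vanishing at $(V_1,\dots,V_\rho)$ is then transparently the condition that $F_i\to Q_i$ be surjective, which makes the check of the composites a one-liner appeal to the fine moduli property (for $M(Q,\br)$ and for the Grassmannian factors of $Y$) rather than the basis-tracking you describe. Both routes arrive at the same place; the paper's is a little slicker, yours is more self-contained.
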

\begin{proof}
Recall that $\tilde{s}_i$ is the dimension of the space of paths from 0 to vertex $i$ (Corollary 3.5, \cite{Craw2011}). Thus 
\[
\C^{\tilde{s}_i}=\bigoplus_{a \in Q_1, t(a)=i, s(a) \neq 0} \C^{\tilde{s}_{s(a)}} \oplus \C^{n_{0i}}.
\]
Let $F_i =\bigoplus_{t(a)=i} Q_{s(a)}$. Combining the canonical surjections $\C^{\tilde{s}_{s(a)}} \otimes \mathcal{O} \to Q_{s(a)}$  gives a surjection $\C^{\tilde{s}_i} \otimes \mathcal{O} \rightarrow F_i$
that fits into the exact sequence
\[
0 \to \bigoplus_{t(a)=i, s(a) \neq 0} S_{s(a)} \to \C^{\tilde{s_i}}\otimes \mathcal{O} \to F_i \to 0.
\]
Thus 
\[
(\C^{\tilde{s}_i}{}^* \otimes \mathcal{O})/F_i^* \cong \bigoplus_{t(a)=i,s(a) \neq 0} S^*_{s(a)}
\]
and it follows that $E=\bigoplus_{i=2}^\rho \Hom(Q_i^*, (\C^{\tilde{s}_i}{}^* \otimes \mathcal{O})/F_i^*)$.

Consider the section $s$ of $E$ given by the compositions $Q_i^* \to \C^{\tilde{s}_i}{}^* \otimes \mathcal{O} \to (\C^{\tilde{s}_i}{}^* \otimes \mathcal{O})/F_i^*.$
The section $s$ vanishes at quotients $(V_1,\dots,V_\rho)$ if and only if $V_i^* \subset \bigoplus_{t(a)=i} V_{s(a)}^*$; dually, the zero locus is where there is a surjection $F_i \to Q_i$ for each $i$.  Now it isn't hard to see that $Z(s)$ is $M(Q,\br)$.  $M(Q,\br)$ parametrizes  surjections $\C^{\tilde{s_i}} \to V_i$ that factor as
\[
\C^{\tilde{s_i}} \to \bigoplus_{t(a)=i} V_{s(a)} \to V_i.
\]
Since the $W_i$ are globally generated, there is a unique map
\[
M(Q,\br) \to Y=\prod_{i=1}^\rho \Gr(\tilde{s_i},r_i)
\]
such that $Q_i \to Y$ pulls back to $W_i \to M(Q,\br)$. The discussion above shows that the image of this map lies in the zero locus of $s$. 
Similarly, the universal property of $M(Q,\br)$ gives rise to a unique map $Z(s) \to M(Q,\br)$ such that $W_i \to M(Q,\br)$ pulls back to $Q_i \to Z(s)$.  Because $M(Q,\br)$ is a fine moduli space, the composition of these maps $M(Q,\br) \to Z(s) \to M(Q,\br)$ must be the identity.  Similarly, the composition $ Z(s) \to M(Q,\br) \to Z(s)$ is the identity, and we conclude that $Z(s)$ and $M(Q,\br)$ are canonically isomorphic.
\end{proof}

Suppose that $X$ is a quiver flag zero locus cut out of $M(Q,\br)$ by a regular section of a homogeneous vector bundle~$E$. Since $M(Q,\br)$ and the product of Grassmannians described above are both GIT quotients by the same group $G$, the representation of $G$ that determines $E$ also determines a vector bundle $E'$ on $\prod_{i=1}^\rho \Gr(\tilde{s_i},r_i)$.  We see that $X$ is deformation equivalent to the zero locus of a generic section of the vector bundle $F := E' \oplus \bigoplus_{a \in Q_1, s(a) \neq 0} S_{s(a)}^* \otimes Q_{t(a)}$. Although the product of Grassmannians is a quiver flag variety, this is not generally an additional model of $X$ as a quiver flag zero locus, as the summand $S_{s(a)}^* \otimes Q_{t(a)}$ in $F$ does not in general come from a representation of $G$. 

\begin{rem}\label{gg} Suppose $\alpha$ is a non-negative Schur partition. Then \cite{Snow86} shows that $S^\alpha(Q)$ is globally generated on $\Gr(n,r)$. The above construction allows us to generalise this to quiver flag varieties: $S^\alpha(W_i)$ is globally generated on $M(Q,\br)$.
\end{rem}

\section{Equivalences of quiver flag zero loci} \label{sec:equivalences}

The representation of a given variety $X$ as a quiver flag zero locus, if it exists, is far from unique.  In this section we describe various methods of passing between different representations of the same quiver flag zero locus.  This is important in practice, because our systematic search for four-dimensional quiver flag zero loci described in the Appendix finds a given variety in many different representations.  Furthermore, geometric invariants of a quiver flag zero locus $X$ can be much easier to compute in some representations than in others.  The observations in this section allow us to compute invariants of four-dimensional Fano quiver flag zero loci using only a few representations, where the computation is relatively cheap, rather than doing the same computation many times and using representations where the computation is expensive.

\subsection{Dualising}

As we saw in the previous section, a quiver flag zero locus $X$ given by $(M(Q,\br), E)$ can be thought of as a zero locus in a product of Grassmannians $Y$. Unlike general quiver flag varieties, Grassmannians come in canonically isomorphic dual pairs:
\begin{center}
    \includegraphics[scale=0.5]{Gr_n_r.pdf}
    \qquad \qquad
    \includegraphics[scale=0.5]{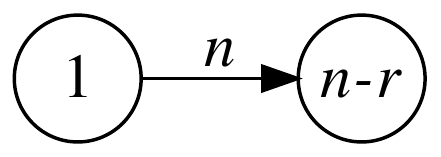}
\end{center}
The isomorphism interchanges the tautological quotient bundle $Q$ with $S^*$, where $S$ is the tautological sub-bundle. One can then dualize some or none of the Grassmannian factors in $Y$, to get different models of $X$. Depending on the representations in $E$, after dualizing, $E$ may still be a homogenous vector bundle, or the direct sum of a homogeneous vector bundle with bundles of the form $S_i^* \otimes W_j$. If this is the case, one can then undo the product representation process to obtain another model $(M(Q', \br'),E'_G)$ of $X$.

\begin{eg} Consider $X$ given by the quiver
\begin{center}
  \includegraphics[scale=0.5]{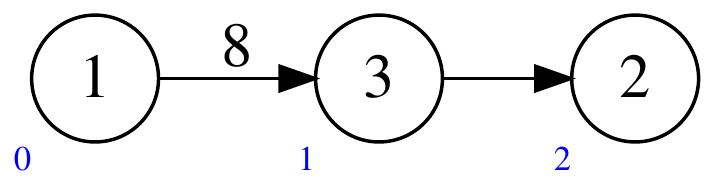} 
\end{center}
and bundle $\wedge^2 W_2$; here and below the vertex numbering is indicated in blue.
Then writing it as a product:
\begin{center}
  \includegraphics[scale=0.5]{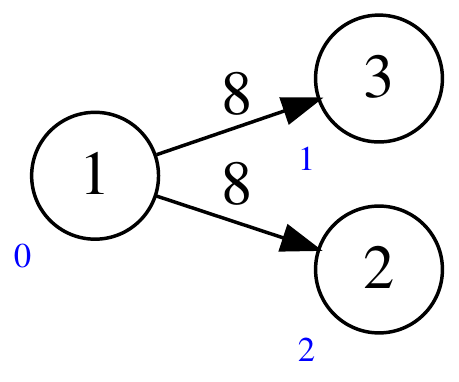} 
\end{center}
with bundle $\wedge^2 W_2 \oplus S_1^* \otimes W_2$ and dualizing the first factor, we get
\begin{center}
  \includegraphics[scale=0.5]{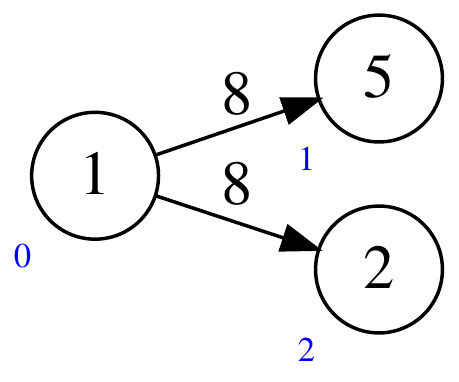} 
\end{center}
with bundle $W_1 \otimes W_2 \oplus \wedge^2 W_2$, which is a quiver flag zero locus. 
\end{eg}

\subsection{Removing arrows}

\begin{eg} \label{eg:Gr arrow}
  Recall that $\Gr(n,r)$ is the quiver flag zero locus given by $(\Gr(n+1,r), W_1)$. This is because the space of sections of $W_1$ is $\C^{n+1}$, where the image of the section corresponding to $v \in \C^{n+1}$ at the point $\phi \colon \C^{n+1} \to  W$ in $\Gr(n+1,r)$ is $\phi(v)$. This section vanishes precisely when $v \in \ker{\phi}$, so we can consider its zero locus to be $\Gr(\C^{n+1}/\langle v\rangle,r) \cong \Gr(n,r)$. The restriction of $W_1$ to this zero locus $\Gr(n,r)$ is $W_1$, and the restriction of the tautological sub-bundle $S$ is $S \oplus \mathcal{O}_{\Gr(n,r)}$.  
\end{eg}

This example generalises.  Let $M(Q,\br)$ be a quiver flag variety.  A choice of arrow $i \to j$ in $Q$ determines a canonical section of $W_i^* \otimes W_j$, and the zero locus of this section is $M(Q',\br)$,  where $Q'$ is the quiver obtained from $Q$ by removing one arrow from $i \to j$.

\begin{eg} \label{eg:Gr det} 
  Similarly, $\Gr(n,r)$ is the zero locus of a section of $S^*$, the dual of the tautological sub-bundle, on $\Gr(n+1,r+1)$. The exact sequence $0 \to W_1^* \to (\C^{n+1})^* \to S^* \to 0$
  shows that a global section of $S^*$ is given by a linear map $\psi: \C^{n+1} \to \C$. The image of the section corresponding to $\psi$ at the point $s \in S$ is $\psi(s)$, where we evaluate $\psi$ on $s$ via the tautological inclusion $S \to \C^{n+1}$. Splitting $\C^{n+1}=\C^{n}\oplus \C$ and choosing $\psi$ to be projection to the second factor shows that $\psi$ vanishes precisely when $S \subset \C^{n}$, that is, precisely along $\Gr(n,r)$.  The restriction of $S$ to this zero locus $\Gr(n,r)$ is $S$, and the restriction of $W_1$ is $W_1 \oplus \mathcal{O}_{\Gr(n,r)}$.
\end{eg}
\subsection{Grafting}

Let $Q$ be a quiver and $S \subset \{1,\dots,\rho\}$. We say that a vertex $i$ is \emph{graftable for $S$} if: 
\begin{itemize}[topsep=0.1ex, itemsep=0pt,parsep=0pt]
\item $r_i=1$ and $0<i<\rho$;
\item there is a path between vertex $i$ and vertex $j$ for all $j \in S$;
\item if we remove all of the arrows from $i$ to vertices in $S$, we get a disconnected quiver. 
\end{itemize}
\begin{eg} 
In the quiver below, vertex $1$ is not graftable for $\{2\}$.
\begin{center}
  \includegraphics[scale=0.5]{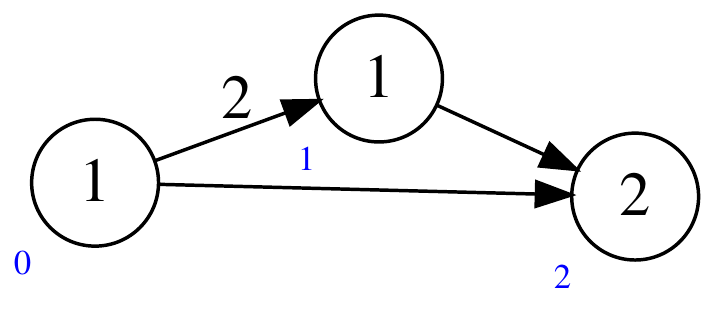} 
\end{center}
If we removed the arrow from vertex $0$ to vertex $2$, then vertex $1$ would be graftable for $\{2\}$. 
\end{eg}

\begin{prop} 
  Let $Q$ be a quiver and let $i$ be a vertex of $Q$ that is graftable for $S$. Let $Q'$ be the quiver obtained from $Q$ by replacing each arrow $i\to j$, where $j \in S$, by an arrow $0 \to j$.  Then 
  \[
   M(Q,\br) = M(Q', \br)
   \]
   for any dimension vector $\br$.
\end{prop}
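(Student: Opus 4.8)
The plan is to use the tower-of-Grassmannian-bundles description from \S\ref{sec:quiver flag varieties}\ref{sec: tower of Grassmannian bundles}, together with the moduli interpretation of \S\ref{sec: moduli spaces}, to show that $M(Q,\br)$ and $M(Q',\br)$ represent the same functor. Since $i$ is graftable, removing the arrows from $i$ to the vertices of $S$ disconnects the quiver; let $A$ denote the connected component containing vertex $i$ (equivalently the source $0$, since there is always a path $0\to i$), and let $B$ denote the union of the remaining components, so that $S\subset B$ and every arrow of $Q$ from $A$ to $B$ emanates from $i$ (this is exactly what graftability buys us). The key structural fact I would establish first is that, because $r_i=1$, the universal bundle $W_i$ on any scheme mapping to $M(Q,\br)$ is a line bundle which is globally generated by the single path $0\rightsquigarrow i$ — more precisely, the composite $\mathcal{O}_B=W_0\to W_i$ coming from any chosen path from $0$ to $i$ is already surjective, hence an isomorphism $W_i\cong\mathcal{O}_B$ after a trivialization; this uses that along $A$ the $\theta$-stability forces each structure map to be surjective and that $r_i=1$ makes a surjection onto a line bundle from a trivial bundle split off a trivialization.

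Given this, I would unwind the data classified by $M(Q,\br)$: globally generated bundles $W_j$ with $W_0=\mathcal{O}$, and maps $W_{s(a)}\to W_{t(a)}$ for each arrow, satisfying stability. For an arrow $a\colon i\to j$ with $j\in S$, the map is $W_i\to W_j$; composing with the isomorphism $\mathcal{O}\xrightarrow{\sim} W_i$ (chosen once and for all via a fixed path $0\rightsquigarrow i$ in $Q$, which still exists in $Q'$ since only arrows out of $i$ are altered and $i\ne 0$) produces a map $\mathcal{O}=W_0\to W_j$, i.e.\ the data of an arrow $0\to j$ in $Q'$; conversely a map $W_0\to W_j$ together with the isomorphism $W_i\cong\mathcal{O}$ gives back a map $W_i\to W_j$. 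All other arrows are untouched, and $\br$ is unchanged. The only thing to check is that this bijection on data is compatible with the $\theta$-stability conditions on both sides and that it is functorial in the base scheme $B$ — i.e.\ that it defines an isomorphism of moduli functors, not merely a bijection on closed points.

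The main obstacle, and the step requiring the most care, is precisely the verification that the rewriting of arrows matches up the stability conditions. Recall from \S\ref{sec:quiver flag varieties} that $\theta$-stability of a representation $w=(w_j)$ is equivalent to $w_j\colon\C^{s_j}\to\C^{r_j}$ being surjective for every $j$. For a vertex $j\in S$ the incoming arrows change — some come from $i$ in $Q$ and from $0$ in $Q'$ — so $s_j$ is computed differently, but since $r_i=1$ and (on the locus where $W_i\cong\mathcal{O}$, which is \emph{all} of $M(Q,\br)$ by the first step) the contribution of the $W_i$-summand to $\C^{s_j}$ is a trivial line exactly matching the contribution of the new $\mathcal{O}$-summand to $\C^{s_j}$ in the $Q'$-picture, surjectivity of the relevant map is preserved under the identification. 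For vertices $j\notin S$ nothing changes at all. For vertex $i$ itself, and for vertices on paths from $i$ into $S$, one must check that the graftability hypothesis (the disconnection condition) guarantees that no stability constraint is lost: the point is that after grafting, vertex $i$ still receives whatever arrows it did before, so $W_i$ is still globally generated and the path $0\rightsquigarrow i$ still trivializes it. I would phrase the conclusion as an isomorphism of the representing GIT quotients by running the argument at the level of the universal families: the universal representation on $M(Q,\br)$ produces, via the arrow-rewriting above, a $\theta$-stable $Q'$-representation on $M(Q,\br)$, hence a morphism $M(Q,\br)\to M(Q',\br)$; symmetrically one gets $M(Q',\br)\to M(Q,\br)$; and because both are fine moduli spaces the two composites are the identity by the universal property, exactly as in the proof of Proposition~\ref{prop:zl}.
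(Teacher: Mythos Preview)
Your proposal contains a genuine error at its foundational step. You claim that the composite $\mathcal{O}=W_0\to W_i$ along a chosen path from $0$ to $i$ is surjective and hence gives an isomorphism $W_i\cong\mathcal{O}$. This is false: $W_i$ is in general a nontrivial line bundle on $M(Q,\br)$ --- indeed $W_i=\det W_i$ is one of the generators of $\Pic M(Q,\br)$ --- and a single path gives only a single global section of $W_i$, which typically vanishes along a divisor. (Concretely, for $Q$ the quiver $0\rightrightarrows 1$ with $r_1=1$ we have $M(Q,\br)=\PP^1$ and $W_1=\mathcal{O}(1)$; a single arrow $0\to 1$ gives a section of $\mathcal{O}(1)$ vanishing at a point.) The $\theta$-stability condition guarantees only that the map $\bigoplus_{t(a)=i}W_{s(a)}\to W_i$ from \emph{all} incoming arrows is surjective, not that any single path trivializes $W_i$. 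Since your entire identification of the two moduli problems rests on composing with this nonexistent global isomorphism, the argument does not go through.

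The paper's proof sidesteps this issue, and the fix is short: rather than attempting to trivialize $W_i$, one \emph{twists} the universal bundles by setting $W_j\mapsto W_i^{*}\otimes W_j$ for $j\in S$ and leaving $W_j$ unchanged otherwise. An arrow $i\to j$ with $j\in S$, which encodes a map $W_i\to W_j$, becomes a map $\mathcal{O}\to W_i^{*}\otimes W_j$, i.e.\ an arrow $0\to j$ for the twisted bundles; arrows internal to $S$ are twisted on both ends and hence unchanged; arrows outside $S$ are untouched. The disconnection hypothesis ensures there are no arrows between $S$ and the rest of the quiver other than those through $i$, so the twist is consistent. This transforms the moduli problem for $Q$ into that for $Q'$ without ever needing $W_i$ to be trivial --- only that it be a line bundle, which is exactly the condition $r_i=1$.
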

\begin{proof}
  Recall from \S\ref*{sec:quiver flag varieties}\ref{sec: moduli spaces} the moduli problem represented by $M(Q,\br)$ and note, since $i$ is graftable for $S$, that $W_i$ is a line bundle.  Setting
  \[
    W_j \mapsto
    \begin{cases}
      W_i^* \otimes W_j & j \in S \\
      W_j & j \not \in S
    \end{cases}
  \]
  transforms the moduli problem for $M(Q,\br)$ into that for $M(Q',\br)$.
\end{proof}

\begin{eg} Consider the quiver flag zero locus $X$ given by the quiver in (a) below, with bundle 
  \[
  W_1 \otimes W_3 \oplus W_1^{\oplus 2} \oplus \det W_1.
  \]
  Writing $X$ inside a product of Grassmannians gives $W_1 \otimes W_3 \oplus W_1^{\oplus 2} \oplus \det W_1$ on the quiver in (b), with arrow bundle $S_2^* \otimes W_1$.  Removing the two copies of $W_1$ using Example~\ref{eg:Gr arrow} gives 
  \[
  W_1 \otimes W_3 \oplus \det W_1
  \]
  on the quiver in (c), with arrow bundle $S_2^* \otimes W_1$. Applying Example~\ref{eg:Gr det} to remove $\det W_1 = \det S_1^* = S_1^*$, and taking care to absorb the extra factors of $W_3$ and $S_2^*$ which arise from $W_1 \otimes W_3$ and the arrow bundle, we see that $X$ is given by $W_1 \otimes W_3$ on the quiver in (d), with arrow bundle $S_2^* \otimes W_1$.  Dualising at vertices~$1$ and~$2$ now gives the quiver in (e), with arrow bundle $S_1^* \otimes W_2 \oplus S_1^* \otimes W_3$.  Finally, undoing the product representation exhibits $X$ as the quiver flag variety for the quiver in (f).
  \begin{center}
    (a) \begin{minipage}[b]{0.3\textwidth}\includegraphics[scale=0.5]{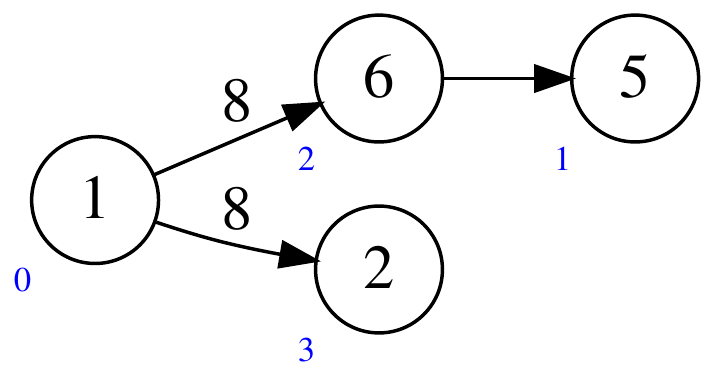}\end{minipage}
    (b) \begin{minipage}[b]{0.25\textwidth}\includegraphics[scale=0.5]{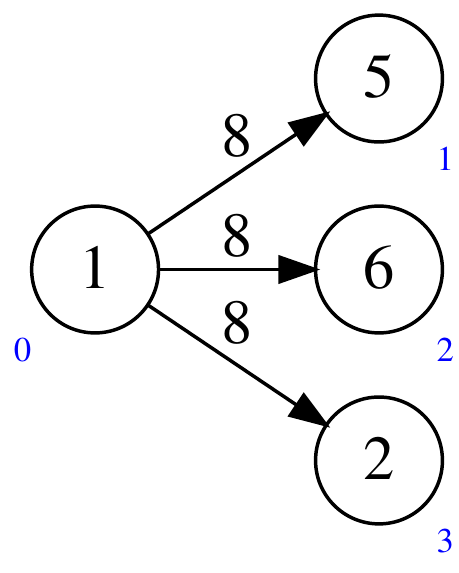}\end{minipage} 
    (c) \begin{minipage}[b]{0.25\textwidth}\includegraphics[scale=0.5]{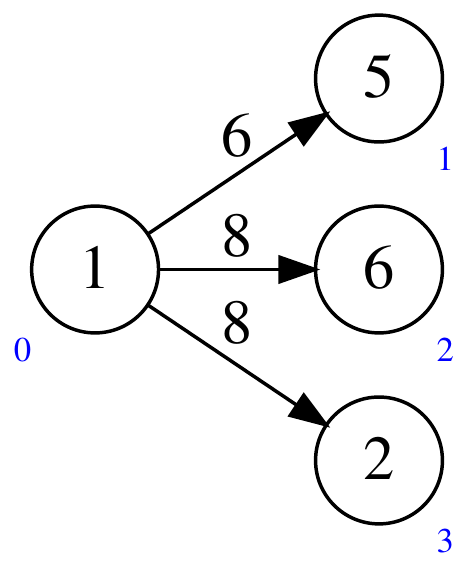}\end{minipage}  \\
    (d) \begin{minipage}[b]{0.3\textwidth}\includegraphics[scale=0.5]{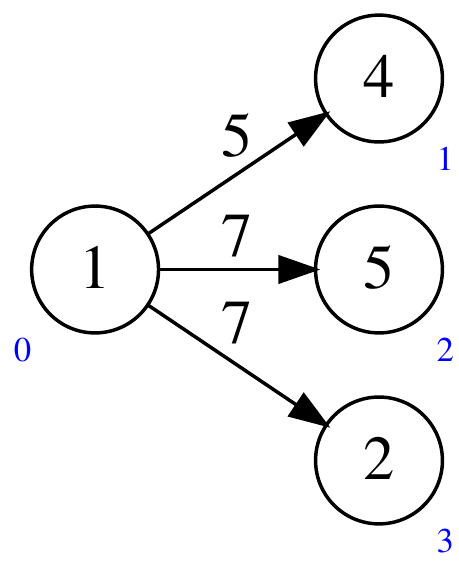}\end{minipage} 
    (e) \begin{minipage}[b]{0.25\textwidth}\includegraphics[scale=0.5]{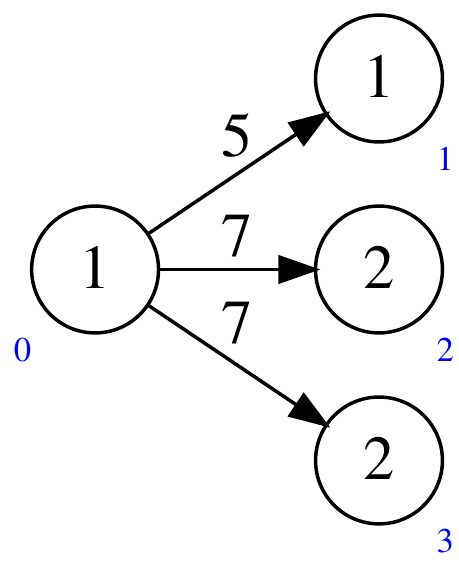}\end{minipage} 
    (f) \begin{minipage}[b]{0.25\textwidth}\includegraphics[scale=0.5]{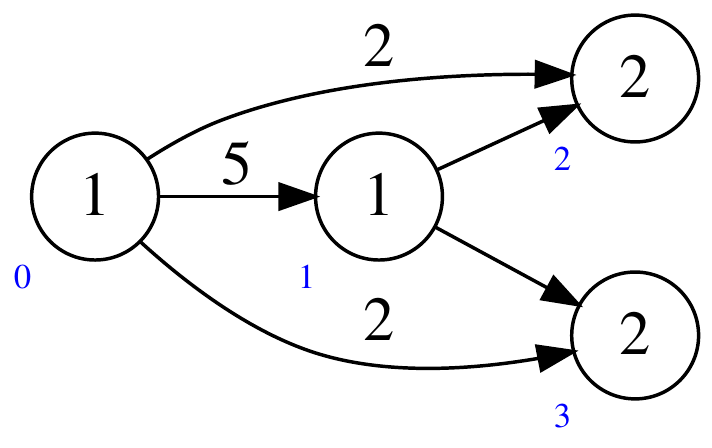}\end{minipage} 
  \end{center}

\end{eg}
\section{The ample cone}

We now discuss how to compute the ample cone of a quiver flag variety.  This is essential if one wants to search systematically for quiver flag zero loci that are Fano.  In~\cite{Craw2011}, Craw gives a conjecture that would in particular solve this problem, by relating a quiver flag variety $M(Q,\br)$ to a toric quiver flag variety. We give a counterexample to this conjecture, and determine the ample cone of $M(Q,\br)$ in terms of the combinatorics of the quiver: this is Theorem~\ref{thm:nef cone} below. Our method also involves a toric quiver flag variety: the Abelianization of $M(Q,\br)$.

\subsection{The multi-graded Pl\"{u}cker embedding}  
Given a quiver flag variety $M(Q,\br)$, Craw defines a multi-graded analogue of the Pl\"{u}cker embedding:
\begin{align*}
  p:M(Q,\br) \hookrightarrow M(Q',\one) && \text{with $\one=(1,\dots,1)$.}
\end{align*}
Here $Q'$ is the quiver with the same vertices as $Q$ but with the number of arrows $i \to j$ given by 
\[
\dim H^0\left(\Hom\big(\det(W_i),\det(W_j)\big)\right)/S
\]
where $S$ is spanned by maps which factor through maps to $\det(W_k)$ with $i<k<j$ (\cite{Craw2011}, Example 2.9 in ~\cite{Craw2018}).  This induces an isomorphism $p^*: \\Pic(X)\otimes \R \to \Pic(X)\otimes \R$ that sends $\det(W_i') \mapsto \det(W_i).$ In~\cite{Craw2011}, it is conjectured that this induces a surjection of Cox rings $\Cox(M(Q',\one)) \to \Cox(M(Q,\br))$. 
This would give information about the Mori wall and chamber structure of $M(Q,\br)$. In particular, by the proof of Theorem 2.8 of~\cite{Levitt2014}, a surjection of Cox rings together with an isomorphism of Picard groups (which we have here) implies an isomorphism of effective cones. 

We provide a counterexample to the conjecture. To do this, we exploit the fact that quiver flag varieties are Mori Dream Spaces, and so the Mori wall and chamber structure on $\NE^1(M(Q,\br) \subset \Pic(M(Q,\br)$ coincides with the GIT wall and chamber structure. This gives GIT characterizations for effective divisors, ample divisors, nef divisors, and the walls. 
 
\begin{thm} \cite{DolgachevHu1998} Let $X$ be a Mori Dream Space obtained as a GIT quotient of $G$ acting on $V=\C^N$ with stability condition $\omega \in \chi(G)=\Hom(G,\C^*)$. Identifying $\Pic(X) \cong \chi(G)$, we have that: 
\begin{itemize}[topsep=0.1ex, itemsep=0pt,parsep=0pt]
\item $v \in \chi(G)$ is ample if $V^{s}(v)=V^{ss}(v)=V^{s}(\omega)$.
\item $v$ is on a wall if $V^{ss}(v) \neq V^{s}(v)$.
\item $v \in \NE^1(X)$ if $V^{ss} \neq \emptyset$.
\end{itemize}
\end{thm}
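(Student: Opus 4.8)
The plan is to deduce the three characterisations from variation of GIT together with the Mori Dream Space hypothesis. First I would recall the Hilbert--Mumford numerical criterion: for $v \in V = \C^N$ and $\chi \in \chi(G)$, the point $v$ is $\chi$-semistable exactly when $\langle \chi, \lambda\rangle \geq 0$ for every one-parameter subgroup $\lambda$ of $G$ such that $\lim_{t\to 0}\lambda(t)\cdot v$ exists in $V$, and $\chi$-stable when the inequality is strict for all nontrivial such $\lambda$ (equivalently, the stabiliser of $v$ is finite). Since $V$ is a linear representation, the collection of one-parameter subgroups that can occur, for all $v$ at once, is governed by finitely many linear functionals on $\chi(G)\otimes\R$; consequently $V^{ss}(\chi)$ and $V^s(\chi)$ are locally constant off a finite rational polyhedral arrangement, the GIT wall-and-chamber decomposition of Dolgachev--Hu~\cite{DolgachevHu1998}. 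In the interior of a maximal chamber one has $V^{ss}(\chi) = V^s(\chi)$, the $G$-action there has finite stabilisers, the quotient $V/\!\!/_\chi G$ is projective, and the linearisation descends to an ample line bundle.

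Next I would line this up with the Mori structure on $X = V /\!\!/_\omega G$. By the theory of Mori Dream Spaces of Hu and Keel, the Cox ring $\Cox(X) = \bigoplus_{L \in \Pic(X)} H^0(X,L)$ is finitely generated and all small $\Q$-factorial modifications of $X$ arise as GIT quotients of $\Spec\Cox(X)$ by the Picard torus; in our setting this role is played by $V$ and $G$, with $\Pic(X) \cong \chi(G)$ via the GIT presentation. Under this identification, $G$-semi-invariant functions on $V$ of weight $kv$ are precisely the global sections of $\mathcal{L}_v^{\otimes k}$ on $X$, so $v$ lies in $\NE^1(X)$ if and only if some positive multiple carries a nonzero semi-invariant, i.e.\ if and only if $V^{ss}(v) \neq \emptyset$. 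This gives the third bullet.

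For the remaining two bullets I would argue as follows. If $v$ lies in the interior of the chamber containing $\omega$, then by the first paragraph $V^{s}(v) = V^{ss}(v) = V^{ss}(\omega) = V^{s}(\omega)$, hence $V/\!\!/_v G \cong X$ and $\mathcal{L}_v$ is ample on $X$; conversely an ample class cannot cross a wall, so the ample cone of $X$ is exactly this open chamber and ampleness of $v$ forces the stated equalities. Finally $v$ is on a wall precisely when it is a boundary point of some chamber, and by the numerical criterion this is exactly the locus where strictly semistable points appear, i.e.\ where $V^{ss}(v) \neq V^{s}(v)$. The crux of the argument, and the step I expect to be the main obstacle, is the identification of the GIT wall-and-chamber decomposition (inside the effective cone) with the Mori one: that each GIT quotient is a birational model occurring in the Mori program follows from variation of GIT combined with Hu--Keel, while the absence of spurious extra GIT walls is exactly where one must invoke finite generation of $\Cox(X)$, that is, the Mori Dream Space hypothesis itself.
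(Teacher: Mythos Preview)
The paper does not prove this statement: it is quoted from Dolgachev--Hu~\cite{DolgachevHu1998} as a black box and then applied (via King's stability criterion) to analyse concrete examples. So there is no proof in the paper to compare against, and your outline is already more than the paper provides.

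As a sketch your argument is along the right lines, but there is one genuine gap. You write that ``in our setting this role is played by $V$ and $G$'' and then assert that $G$-semi-invariants on $V$ of weight $kv$ are \emph{precisely} the global sections of $\mathcal L_v^{\otimes k}$ on $X$. That identification is exactly what lets you match the GIT fan on $\chi(G)\otimes\Q$ with the Mori fan on $\NE^1(X)$, and it does not follow from the hypotheses as stated. The isomorphism $\Pic(X)\cong\chi(G)$ tells you every line bundle descends from a character, and restriction gives a map $\C[V]^{G,kv}\to H^0(X,\mathcal L_v^{\otimes k})$, but surjectivity of this map (equivalently, surjectivity of $\C[V]\to\Cox(X)$) is an additional input: it requires, for instance, that the unstable locus $V\setminus V^{ss}(\omega)$ have codimension at least two. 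Without it the GIT effective cone could in principle be strictly smaller than $\NE^1(X)$, and your third bullet and the matching of wall structures would not be justified. The paper itself takes this identification for granted in the quiver-flag setting (see \S\ref*{sec:quiver flag varieties}\ref{sec: moduli spaces}), but if you want a self-contained argument you should make the codimension hypothesis explicit or otherwise justify the Cox-ring surjection.
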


When combined with King's characterisation~\cite{King1994} of the stable and semistable points for the GIT problem defining $M(Q,\br)$, this determines the ample cone of any given quiver flag variety.  In Theorem~\ref{thm:nef cone} below we make this effective, characterising the ample cone in terms of the combinatorics of $Q$, but this is already enough to see a counterexample to the conjecture.

\begin{eg}
Consider the quiver $Q$ and dimension vector $\br$ as in (a).  The target $M(Q',\one)$ of the multi-graded Pl\"ucker embedding has the quiver $Q'$ shown in (b).
\begin{center}
  (a) \begin{minipage}[t]{0.3\textwidth}\includegraphics[scale=0.5]{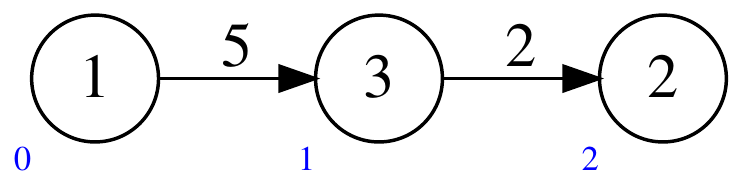}\end{minipage} \qquad \qquad
  (b) \begin{minipage}[b]{0.3\textwidth}\includegraphics[scale=0.5]{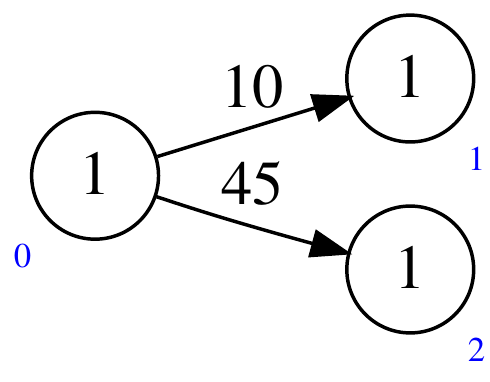}\end{minipage} 
\end{center}
In this case, $M(Q',\one)$ is a product of projective spaces and so the effective cone coincides with the nef cone, which is just the closure of the positive orthant.  The ample cone of $M(Q,\br)$ is indeed the positive orthant, as we will see later.  However, the effective cone is strictly larger. We will use King's characterisation of semi-stable points with respect to a character $\chi$ of $\prod_{i=0}^\rho Gl(r_i)$: a representation $R = (R_i )_{i \in Q_0}$ is semi-stable with respect to $\chi=(\chi_i)_{i=0}^\rho$ if and only if
\begin{itemize}[topsep=0.1ex, itemsep=0pt,parsep=0pt]
\item $\sum_{i=0}^\rho \chi_i \dim_{\C}(R_i)=0$; and
\item for any  subrepresentation $R'$ of $R$, $\sum_{i=0}^\rho \chi_i \dim_{\C}(R'_i)\geq 0$.
\end{itemize}

Consider the character $\chi=(-1,3)$ of $G$, which we lift to a character of $\prod_{i=0}^\rho Gl(r_i)$ by taking $\chi=(-3,-1,3)$. We will show that there exists a representation $R=(R_0,R_1,R_2)$ which is semi-stable with respect to $\chi$. 
The maps in the representation are given by a triple $(A,B,C) \in Mat(3 \times 5) \times Mat(2 \times 3) \times Mat(2 \times 3).$ Suppose that
\begin{align*}
  \text{$A$ has full rank}, &&
  B=\begin{bmatrix}
    1 & 0& 0\\
    0 & 1 &0 \\
  \end{bmatrix}, &&
  C=\begin{bmatrix}
    0 & 0 & 0\\
    0 & 0 & 1 \\
  \end{bmatrix},
\end{align*}
and that $R'$ is a subrepresentation with dimensions $a$,~$b$,~$c$. We want to show that $-3 a - b + 3c \geq 0.$ If $a=1$ then $b=3$, as otherwise the image of $A$ is not contained in $R_1'$. Similarly, this implies that $c=2$. So suppose that $a=0$. The maps $B$ and $C$ have no common kernel, so $b > 0$ implies $c > 0$, and $-b+3 c \geq 0$  as $b \leq 3$.  Therefore $R$ is a semi-stable point for $\chi$, and as quiver flag varieties are Mori Dream Spaces, $\chi$ is in the effective cone. 
\end{eg}

\subsection{Abelianization}

We consider now the toric quiver flag variety associated to a given quiver flag variety $M(Q,\br)$ which arises from the corresponding Abelian quotient.  Let $T \subset G$ be the diagonal maximal torus. Then the action of $G$ on $\Rep(Q,\br)$ induces an action of $T$ on $\Rep(Q,\br)$, and the inclusion $i:\chi(G) \hookrightarrow \chi(T)$ allows us to interpret the special character $\theta$ as a stability condition for the action of $T$ on $\Rep(Q,\br)$. The Abelian quotient is then $\Rep(Q,\br)/\!\!/_\theta T$.
Let us see that $\Rep(Q,\br)/\!\!/_\theta T$ is a toric quiver flag variety. Let $\lambda=(\lambda_1,\dots,\lambda_\rho)$ denote an element of $T=\prod_{i=1}^\rho (\C^*)^{r_i}$, where $\lambda_j=(\lambda_{j1},\dots,\lambda_{j r_j})$. The action of $\lambda$ on the $(i,j)$ entry $x$ of the $r_{t(a)} \times r_{s(a)}$ matrix which is the $a \in Q_1$ component of an element in $\Rep(Q,\br)$
is 
\[
x \mapsto \lambda_{s(a) i}^{-1} x \lambda_{t(a) j}.
\]
Hence this is the same as the group action on the quiver $Q^{\ab}$ with vertices 
\[
Q^{\ab}_0=\{v_{i j} : 0 \leq i \leq \rho, 1 \leq j \leq r_i\}
\]
and the number of arrows between $v_{i j}$ and $v_{i k}$ is the number of arrows in the original quiver between vertices $i$ and $k$. Hence 
\[
\Rep(Q,\br)/\!\!/_\theta T=M(Q^{\ab},\one).
\]
We call $Q^{\ab}$ the \emph{Abelianized} quiver. 
\begin{eg} Let $Q$ be the quiver
\begin{center}
  \includegraphics[scale=0.5]{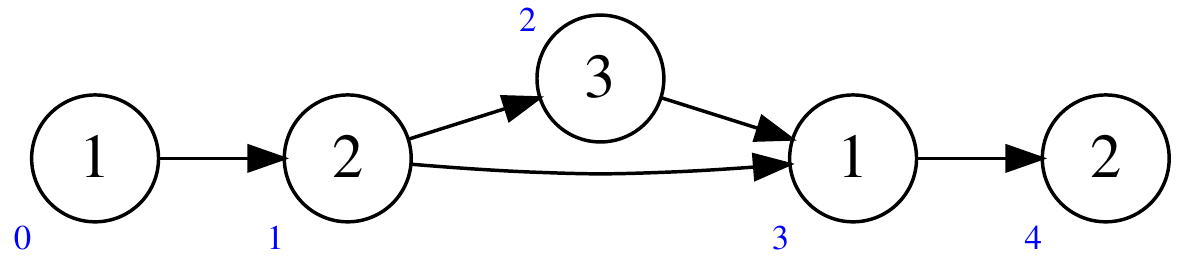} 
\end{center}
Then $Q^{\ab}$ is
\begin{center}
  \includegraphics[scale=0.5]{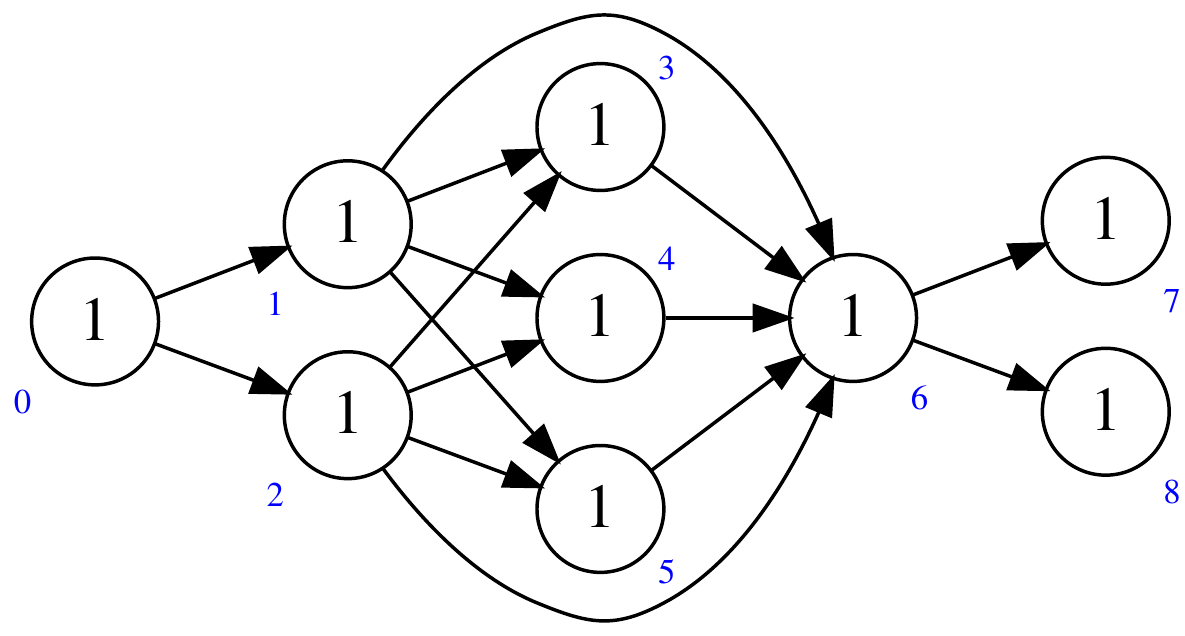} 
\end{center}
\end{eg}

Martin~\cite{Martin2000} has studied the relationship between the cohomology of Abelian and non-Abelian quotients.  We state his result specialized to quiver flag varieties, then extend this to a comparison of the ample cones.  To simplify notation, denote $M_Q=M(Q,\br)$, $M_{Q^{\ab}}=M(Q^{\ab},(1,\dots,1))$ and $V=\Rep(Q,\br)=\Rep(Q^{\ab},(1,\dots,1))$. For $v \in \chi(G)$, let $V_v^s(T)$ denote the $T$-stable points of $V$ and $V_v^s(G)$ denote the $G$-stable points, dropping the subscript if it is clear from context. It is easy to see that $V^s(G) \subset V^s(T)$. The Weyl group $W$ of $(G,T)$ is $\prod_{i=1}^\rho S_{r_i}.$ Let $\pi:V^s(G)/T \to V^s(G)/G$ be the projection.  The Weyl group acts on the cohomology of $M(Q^{\ab},\one)$, and also on the Picard group, by permuting the $W_{v_{i1}},\dots,W_{v_{i r_i}}.$  It is well-known (see e.g.~Atiyah--Bott~\cite{AtiyahBott84}) that 
\[
\pi^*: H^*(V^s(G)/T)^W \cong H^*(M_Q).
\]

\begin{thm}{\cite{Martin2000}}\label{martin} 
  There is a graded surjective ring homomorphism
  \[
  \phi:H^*(M_{Q^{\ab}},\C)^W  \to H^*(V^{s}(G)/T,\C) \xrightarrow{\pi^*} H^*(M_Q,\C)
  \]
  where the first map is given by the restriction $V^{s}(T)/T \to V^{s}(G)/T$.  The kernel is the annihilator of $e=\prod_{i=1}^\rho \prod_{1 \leq j,k \leq r_i} c_1(W_{v_{i j}}^* \otimes W_{v_{i k}}).$
\end{thm}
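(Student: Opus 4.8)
The plan is to derive the statement as a special case of Martin's comparison theorem~\cite{Martin2000}, which relates the cohomology of a GIT quotient by a nonabelian reductive group to that of the quotient by a maximal torus; the real work is to check the hypotheses for the linear action of $G$ on $V=\Rep(Q,\br)$ and to make the two maps and the class $e$ explicit. Recall the shape of Martin's theorem: given the two quotients $V/\!\!/_\theta G$ and $V/\!\!/_\theta T$, it passes through the intermediate space $V^{s}(G)/T$, which is simultaneously an open subvariety of $V^{s}(T)/T$ (because $V^{s}(G)\subseteq V^{s}(T)$) and the total space of a locally trivial $G/T$-bundle $\pi\colon V^{s}(G)/T\to V^{s}(G)/G$. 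We have already identified $V^{s}(G)/G=M_Q$ and $V^{s}(T)/T=M_{Q^{\ab}}$, so the two arrows in the statement are precisely the pullback $j^{*}$ along the open immersion $j\colon V^{s}(G)/T\hookrightarrow M_{Q^{\ab}}$, together with the inverse of the Atiyah--Bott isomorphism $\pi^{*}\colon H^{*}(M_Q,\C)\xrightarrow{\sim}H^{*}(V^{s}(G)/T,\C)^{W}$ quoted above.

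First I would verify Martin's hypotheses. Since $\theta$ has no strictly semistable points for the $G$-action --- and, by King's criterion, none for the induced $T$-action either --- both $V^{s}(G)/G$ and $V^{s}(T)/T$ are geometric quotients by free actions, and both are smooth and projective, being towers of Grassmannian bundles (respectively projective bundles) by~\S\ref*{sec:quiver flag varieties}\ref{sec: tower of Grassmannian bundles}. In particular $\pi$ is the $G/T$-flag bundle associated to the principal $G$-bundle $V^{s}(G)\to M_Q$, so the Atiyah--Bott isomorphism above applies; and via Kempf--Ness these GIT quotients agree with the symplectic reductions to which Martin's theorem applies. Granting this, Martin's theorem yields that $\phi$ --- which is $j^{*}$ followed by the inverse of the Atiyah--Bott isomorphism, and is well defined because $j^{*}$ sends $W$-invariant classes to $W$-invariant classes --- is a graded ring homomorphism onto $H^{*}(M_Q,\C)$, with kernel the annihilator in $H^{*}(M_{Q^{\ab}},\C)^{W}$ of the Euler class of the vector bundle on $M_{Q^{\ab}}$ associated to the $T$-representation $\mathfrak{g}/\mathfrak{t}$. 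That $\phi$ is a ring map is immediate; that it is onto is part of Martin's conclusion (and is also clear here, as $H^{*}(M_Q,\C)$ is generated by Chern classes of the tautological bundles, each of which lies in the image of $\phi$ by the tower description); the kernel statement is the substantive part of Martin's argument, coming from the integration formula $\int_{M_Q}\bar a=\tfrac{1}{|W|}\int_{M_{Q^{\ab}}}\tilde a\cup e$ together with Poincar\'e duality.

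It then remains only to identify $e$ explicitly. For $G=\prod_{i=1}^{\rho}\GL(r_i)$ the $T$-representation $\mathfrak{g}/\mathfrak{t}$ decomposes as $\bigoplus_{i=1}^{\rho}\bigoplus_{j\neq k}\C_{\epsilon_{ij}-\epsilon_{ik}}$, where $\epsilon_{ij}$ is the character of $T=\prod_{i=1}^{\rho}(\C^{*})^{r_i}$ picking out the $j$-th coordinate of the $i$-th block. Under the identification of the tautological line bundles of $M_{Q^{\ab}}$ with the bundles $W_{v_{ij}}$, the line bundle attached to the character $\epsilon_{ij}-\epsilon_{ik}$ is $W_{v_{ij}}^{*}\otimes W_{v_{ik}}$, so the Euler class in question is $e=\prod_{i=1}^{\rho}\prod_{j\neq k}c_{1}\bigl(W_{v_{ij}}^{*}\otimes W_{v_{ik}}\bigr)$, as claimed.

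I expect the only real obstacle to be bookkeeping rather than mathematics: one must make sure that Martin's theorem, originally stated for symplectic reductions of compact K\"ahler varieties, genuinely applies to the quotient of the noncompact affine space $\Rep(Q,\br)$ --- which it does, thanks to Kempf--Ness and to the tower-of-Grassmannian-bundles description guaranteeing smoothness, projectivity, and freeness of the actions on the stable loci --- and that all conventions (which roots, which tautological bundles, and the resulting signs) are matched so that Martin's abstract Euler class becomes the displayed product. No individual step should be genuinely difficult once these identifications are set up correctly.
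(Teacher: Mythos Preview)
The paper does not prove this statement at all: it is stated as a citation of Martin's result and immediately followed by a remark and a corollary, with no proof environment. Your proposal is correct and supplies exactly the verification that the paper leaves implicit --- checking that Martin's hypotheses hold for the linear action on $\Rep(Q,\br)$ (freeness of the stable loci, smoothness and projectivity via the tower description, Kempf--Ness to pass between GIT and symplectic reduction) and unpacking the abstract Euler class of $\mathfrak{g}/\mathfrak{t}$ as the displayed product over roots. So you are providing strictly more than the paper does, and doing so soundly.

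One minor remark: the paper's displayed formula writes the product over all $1\le j,k\le r_i$, which taken literally would include the diagonal terms $c_1(\mathcal{O})=0$ and annihilate $e$; this is evidently a typo for $j\neq k$, as confirmed by the later appearance of $\prod_{j\neq k}$ in the $I$-function formulas. Your write-up already uses $j\neq k$, which is the correct reading.
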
 
\begin{rem} This means that any class $\sigma \in H^*(M_Q)$ can be lifted (non-uniquely) to a class $\tilde{\sigma} \in H^*(M_{Q^{\ab}})$. Moreover, $e \cap \tilde{\sigma}$ is uniquely determined by $\sigma$.
\end{rem}
\begin{cor} Let $E$ be a representation of $G$ and hence $T$. We can form the vector bundles
  \begin{align*}
    E_G&=(V^s(G) \times E)/G \to M_Q,\\
    E_G'&=(V^s(G) \times E)/T \to V^s(G)/T,\\
    E_T&=(V^s(T) \times E)/T \to M_{Q^{\ab}},
  \end{align*}
  where in all cases the group acts diagonally.
Then $\phi(c_i(E_T))=c_i(E_G).$
\end{cor}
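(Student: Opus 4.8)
The plan is to reduce the statement to Martin's Theorem~\ref{martin} by identifying the Chern classes of $E_T$ and $E_G$ with images of a common class under the restriction maps, using the intermediate bundle $E_G'$ on $V^s(G)/T$. First I would recall that all three bundles arise from the single representation $E$ of $G$ (hence of $T$), so it suffices to prove the identity after decomposing $E$ into irreducibles; by Whitney additivity of total Chern classes it is even enough to treat the summands, and by splitting-principle/functoriality arguments it is enough to understand line bundles, i.e.\ the characters of $T$. Thus the whole statement will follow once we know that $\phi$ sends $c_1$ of the line bundle on $M_{Q^{\ab}}$ associated to a character $\chi\in\chi(T)$ to $c_1$ of the corresponding line bundle on $M_Q$ — but for characters coming from $\chi(G)\subset\chi(T)$ this is exactly the statement that $\phi$ is compatible with the identification $\Pic\cong\chi(G)$, and in general the $W$-invariant combinations of the $v_{ij}$-characters that appear in $c_i(E_T)$ (for $E$ a genuine $G$-representation, so $W$-invariant) land in the image of $\chi(G)$.

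The key steps, in order: (1) Observe $E_G = (V^s(G)\times E)/G$ and $E_G' = (V^s(G)\times E)/T$ fit into $E_G' = \pi^* E_G$ under $\pi\colon V^s(G)/T\to V^s(G)/G$, since quotienting by $G$ factors through quotienting by $T$ followed by quotienting by $W$; hence $\pi^* c_i(E_G) = c_i(E_G')$. (2) Observe that $E_G'$ is the restriction of $E_T = (V^s(T)\times E)/T$ along the open inclusion $j\colon V^s(G)/T\hookrightarrow V^s(T)/T = M_{Q^{\ab}}$ (using $V^s(G)\subset V^s(T)$), so $j^* c_i(E_T) = c_i(E_G')$. (3) By definition $\phi = \pi^*\circ j^*$ (the first map in Martin's composition is exactly restriction along $j$), so $\phi(c_i(E_T)) = \pi^* j^* c_i(E_T) = \pi^* c_i(E_G') = \dots$; wait, more directly, $\phi(c_i(E_T)) = \pi^*\big(j^* c_i(E_T)\big) = \pi^* c_i(E_G')$, and by (1) we need this to equal $c_i(E_G)$. (4) Since $\pi^*$ is injective on cohomology onto the $W$-invariants (Atiyah--Bott, as quoted before Theorem~\ref{martin}), and $\pi^* c_i(E_G) = c_i(E_G') = \pi^*\big(\text{something}\big)$ — actually $\pi^* c_i(E_G)$ literally equals $c_i(E_G')$ by step (1), and step (3) gives $\phi(c_i(E_T)) = \pi^* c_i(E_G')$... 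I must be careful: $c_i(E_G')$ already lives on $V^s(G)/T$, so $\phi(c_i(E_T)) = c_i(E_G')$, and this equals $\pi^* c_i(E_G)$ by (1); finally $\pi^*$ is \emph{injective}, but here we already have the equality $c_i(E_G') = \pi^* c_i(E_G)$ on the nose, so composing with nothing further, $\phi(c_i(E_T)) = \pi^* c_i(E_G)$. The corollary as stated identifies classes on $M_Q$, so one reads the right-hand side of Theorem~\ref{martin}'s composition, where $\pi^*$ is the final map to $H^*(M_Q)$; thus $\phi(c_i(E_T)) = c_i(E_G)$ in $H^*(M_Q)$ precisely because the composition in Theorem~\ref{martin} ends with $\pi^*$ and we have matched the intermediate terms.

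The main obstacle is purely bookkeeping: making sure the three bundles really do glue as claimed over the open loci, i.e.\ that $E_G$ pulls back to $E_G'$ under $\pi$ and that $E_G'$ is the honest restriction of $E_T$ under $j$ — both are formal from $E_G = E\times_G V^s(G)$ and the compatibility of the $G$-, $T$-, and $W$-actions, but one should spell out that $V^s(G)/T\to V^s(G)/G$ is the quotient by the residual free $W$-action (so Chern classes pull back) and that $V^s(G)\subset V^s(T)$ is $T$-saturated and open, so the associated-bundle construction commutes with restriction. No genuinely hard analytic or geometric input is needed beyond what Martin's theorem already supplies; the content of the corollary is exactly that the surjection $\phi$ is induced by honest restrictions of vector bundles, so it is compatible with characteristic classes. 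I would therefore keep the write-up to the three identifications (1)--(3) above and conclude by invoking Theorem~\ref{martin}.
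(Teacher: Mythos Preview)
Your steps (1) and (2) and the conclusion are exactly the paper's proof: one shows $\pi^*E_G = E_G' = f^*E_T$ (with $f$ your $j$), hence $\pi^*c_i(E_G) = f^*c_i(E_T)$, and concludes from the definition of $\phi$ in Theorem~\ref{martin}. Your first paragraph (reduction to line bundles via splitting principle and characters) is an unnecessary detour and should be dropped --- the argument works directly for arbitrary $E$, as the paper does; in fact that reduction is slightly muddled, since arbitrary $T$-characters arising from the splitting principle need not come from $\chi(G)$, and you never actually use it. The fumbling in step (3) stems from the paper's mild abuse of notation: the arrow labeled $\pi^*$ in Theorem~\ref{martin} really denotes the \emph{inverse} of the pullback isomorphism $\pi^*\colon H^*(M_Q)\xrightarrow{\sim} H^*(V^s(G)/T)^W$, so $\phi(\alpha)=\beta$ means precisely $f^*\alpha = \pi^*\beta$ on $V^s(G)/T$, which is exactly what you have established.
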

\begin{proof}
Let $f$ be the inclusion $V^s(G)/T \to V^s(T)/T$. Clearly $f^*(E_T)=E_G'$ as $E_G'$ is just the restriction of $E_T$.  Considering the square
\[
\xymatrix{
  E_G'=(V^s(G) \times E)/T \ar[r] \ar[d] &E_G =(V^s(G) \times E)/G \ar[d] \\
  V^s(G)/T \ar[r]^-{\pi} & V^s(G)/G,
}
\]
we see that $\pi^*(E_G) =E_G'$. Then we have that $f^*(E_T)=\pi^*(E_G)$, and so in particular $f^*(c_i(E_T)=\pi^*(c_i(E_G))$.  The result now follows from Martin's theorem (Theorem~\ref{martin}).
\end{proof}
The corollary shows that the restriction of Martin's isomorphism to degree 2 is just
\[
i:c_1(W_i) \mapsto \sum_{j=1}^{r_i} c_1(W_{v_{ij}}).
\]
In particular we have that $i(\omega_{M_Q})=\omega_{M_{Q^{\ab}}}$.

\begin{prop}
  Let $\Amp(Q)$,~$\Amp(Q^{\ab})$ denote the ample cones of $M_Q$ and $M_{Q^{\ab}}$ respectively. Then 
  \[
  i(\Amp(Q))=\Amp(Q^{\ab})^W.
  \]
\end{prop}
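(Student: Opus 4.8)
The plan is to prove both inclusions using the GIT characterization of ample classes from the Dolgachev--Hu theorem together with the relation $V^s(G) \subset V^s(T)$ and King's criterion. First I would record the key compatibility: for a character $v \in \chi(G)$, lifting to $i(v) \in \chi(T)$ via the inclusion $\chi(G) \hookrightarrow \chi(T)$, the $G$-(semi)stability of a point $x \in V$ with respect to $v$ is controlled by the $T$-(semi)stability of the whole $G$-orbit of $x$ with respect to $i(v)$. Concretely, one should check that $V^{ss}_v(G) = G \cdot V^{ss}_{i(v)}(T)$ and $V^s_v(G) = G \cdot V^s_{i(v)}(T)$ — this is the standard translation between the Hilbert--Mumford criterion for $G$ and for $T$ (one only needs to test one-parameter subgroups landing in maximal tori, and every maximal torus is $G$-conjugate to $T$). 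Since $\theta \in \chi(G)$ is already a stability condition for $T$ with $M_{Q^{\ab}} = V /\!\!/_\theta T$, and since $i(\omega_{M_Q}) = \omega_{M_{Q^{\ab}}}$ (noted just before the proposition), the ampleness condition $V^s_v(G) = V^{ss}_v(G) = V^s_\theta(G)$ will correspond, via these orbit-equalities, precisely to the $T$-side ampleness condition $V^s_{i(v)}(T) = V^{ss}_{i(v)}(T) = V^s_\theta(T)$.

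For the inclusion $i(\Amp(Q)) \subseteq \Amp(Q^{\ab})^W$: start with $v$ ample for $M_Q$, so $V^s_v(G) = V^{ss}_v(G) = V^s_\theta(G)$. Apply the orbit-equalities to deduce $G \cdot V^s_{i(v)}(T) = G \cdot V^{ss}_{i(v)}(T) = G \cdot V^s_\theta(T)$; then I would upgrade this to equality of the $T$-loci themselves. The subtlety here is that $G$-saturation could in principle collapse distinct $T$-loci, so one wants: if $G \cdot A = G \cdot B$ for two $T$-invariant open sets defined by the Hilbert--Mumford numerical conditions, and one is contained in the other's closure appropriately, then $A = B$. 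This follows because the (semi)stable loci for $T$ are cut out by the numerical function $\mu(x, \lambda)$ over one-parameter subgroups $\lambda$ of $T$, which is $G$-equivariant in the appropriate sense; concretely $x \in V^s_{i(v)}(T)$ is a condition on $x$ that is invariant under replacing $x$ by $g \cdot x$ and $\lambda$ by $g \lambda g^{-1}$, but since $T$-one-parameter subgroups need not stay in $T$ after conjugation one must instead argue that $G \cdot V^s_{i(v)}(T) = V^s_{i(v)}(G)$ directly and then use that $V^s_{i(v)}(G) \subseteq V^s_{i(v)}(T)$ forces, on the quotient, the desired equality of the toric (semi)stable loci. That $i(v)$ is then $W$-invariant is automatic since $i(v)$ lies in the image of $\chi(G)$ and the $W$-action on $\chi(T)$ fixes that image pointwise.

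For the reverse inclusion $\Amp(Q^{\ab})^W \subseteq i(\Amp(Q))$: take $w \in \Amp(Q^{\ab})^W$. Since $w$ is $W$-invariant and the $W$-invariant part of $\chi(T) \otimes \R$ is exactly $i(\chi(G) \otimes \R)$, write $w = i(v)$ for a unique $v \in \chi(G) \otimes \R$. Now $w$ ample for $M_{Q^{\ab}}$ gives $V^s_w(T) = V^{ss}_w(T) = V^s_\theta(T)$. I would then take $G$-saturations: $G \cdot V^s_w(T) = G \cdot V^s_\theta(T) = V^s_\theta(G)$, and simultaneously $G \cdot V^s_w(T) = V^s_v(G) = V^{ss}_v(G)$ by the orbit-equalities from step one (applied in reverse, for rational characters one passes to a positive integer multiple to stay in the lattice, which is harmless for cone membership). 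Hence $V^s_v(G) = V^{ss}_v(G) = V^s_\theta(G)$, i.e.\ $v \in \Amp(Q)$, so $w = i(v) \in i(\Amp(Q))$.

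The main obstacle I anticipate is step one — pinning down exactly how $G$-saturation interacts with $T$-(semi)stability, specifically the identity $V^{ss}_v(G) = G \cdot V^{ss}_{i(v)}(T)$ and its stable analogue, and ensuring no loss of information when passing between the $T$-locus and its $G$-saturation. This is where one genuinely uses that $G/\!\!/_\theta$ and $T/\!\!/_\theta$ share the same unstable-locus stratification through maximal tori (every destabilizing one-parameter subgroup for $G$ is conjugate into $T$), together with the fact that $V^s(G) \subset V^s(T)$ is an open immersion compatible with $\pi$. Everything else — the translation of "ample" into the three-fold equality of loci, the identification of the $W$-fixed sublattice with $i(\chi(G))$, and the compatibility $i(\theta) = \theta$, $i(\omega_{M_Q}) = \omega_{M_{Q^{\ab}}}$ — is either immediate or already established above.
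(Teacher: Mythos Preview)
Your key claimed identity $V^{ss}_v(G) = G \cdot V^{ss}_{i(v)}(T)$ is false. The Hilbert--Mumford argument you sketch actually gives the opposite: since every one-parameter subgroup of $G$ is conjugate into $T$, a point $x$ is $G$-semistable if and only if $g^{-1}x$ is $T$-semistable for \emph{every} $g \in G$, so $V^{ss}_v(G) = \bigcap_{g \in G} g \cdot V^{ss}_{i(v)}(T)$, not the union. For a concrete counterexample take $G = \GL(2)$ acting on $\C^2$ with $v = \det$: then $V^{ss}_{i(v)}(T) = (\C^*)^2$, so $G \cdot V^{ss}_{i(v)}(T) = \C^2 \setminus \{0\}$, but $V^{ss}_v(G) = \emptyset$ since $\GL(2)$ has no nonconstant $\det^k$-semi-invariants on $\C^2$. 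Your reverse-inclusion step $G \cdot V^s_\theta(T) = V^s_\theta(G)$ fails for the same reason.

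With the correct intersection formula the inclusion $\Amp(Q^{\ab})^W \subseteq i(\Amp(Q))$ actually does go through cleanly (equal $T$-loci give equal intersections), but the forward inclusion does not, because equal intersections need not force equal $T$-loci --- this is exactly the ``upgrading'' you flag as subtle, and it genuinely fails. The paper takes a different route altogether: it proves $i(\Amp(Q)) \subseteq \Amp(Q^{\ab})^W$ via curve duality, using the projection $p \colon \NE_1(M_{Q^{\ab}}) \to \NE_1(M_Q)$ of Ciocan-Fontanine--Kim--Sabbah so that $i(\alpha) \cdot C = \alpha \cdot p(C) \geq 0$ for every effective curve $C$. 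Then, rather than proving the reverse inclusion directly, it shows $i(\Wall(G)) \subset \Wall(T)^W$ by conjugating a strictly-destabilizing one-parameter subgroup into $T$. Since the boundary of $i(\Amp(Q))$ therefore cannot lie inside $\Amp(Q^{\ab})^W$, and both cones are full-dimensional in the $W$-invariant subspace, the inclusion is forced to be an equality.
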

\begin{proof}
Let $v$ be a character for $G$, denoting its image under $i: \chi(G) \hookrightarrow \chi(T)$ as $v$ as well. First note that  $V^{ss}_v(G) \subset V^{ss}_v(T)$.
To see this, suppose $v \in V$ is semi-stable for $G$,~$v.$ Let $\lambda: \C^* \to T$ be a one-parameter subgroup of $T$ such that $\lim_{t \to 0}\lambda(t) \cdot v$ exists. By inclusion, $\lambda$ is a one-parameter subgroup of $G$, and so $\langle v, \lambda \rangle \geq 0$ by semi-stability of $v$. Hence $v \in V^{ss}_v(T).$   It follows that, if $v \in \NE^1(M_Q)$, then $V^{ss}_v(G) \neq \emptyset$, so $V^{ss}_v(T) \neq \emptyset$, and hence $v \in \NE^1(M_{Q^{\ab}})^W.$ 

Ciocan-Fontanine--Kim--Sabbah use duality to construct a projection~\cite{CiocanFontanineKimSabbah2008} 
\[
p: \NE_1(M_{Q^{\ab}}) \to \NE_1(M_Q).
\]
Suppose that $\alpha \in \Amp(Q)$. Then for any $C \in \NE_1(M_{Q^{\ab}})$, $i(\alpha) \cdot C=\alpha \cdot p(C) \geq 0.$
So $i(\alpha) \in \Amp(Q^{\ab})^W$.


Let $\Wall(G) \subset \Pic(M_Q)$ denote the union of all GIT walls given by the $G$ action, and similarly for $\Wall(T)$. Recall that $v \in \Wall(G)$ if and only if it has a non-empty strictly semi-stable locus. Suppose $v \in \Wall(G)$, with $v$ in the strictly semi-stable locus. That is, there exists a non-trivial $\lambda: \C^* \to G$ such that $\lim_{t \to 0} \lambda(t) \cdot v$ exists and $\langle v,\lambda \rangle=0$. Now we don't necessarily have $\image(\lambda)\subset T$, but the image is in some maximal torus, and hence there exists $g \in G$ such that $\image(\lambda) \subset g^{-1} T g.$ Consider $\lambda'=g \lambda g^{-1}$. Then $\lambda'(\C^*) \subset T.$ Since $g\cdot v$ is in the orbit of $v$ under $G$, it is semi-stable with respect to $G$, and hence with respect to $T$. In fact, it is strictly semi-stable with respect to $T$, since $\lim_{t \to 0} \lambda'(t) g \cdot v=\lim_{t \to 0} g \lambda(t) \cdot v$ exists, and $\langle v, \lambda' \rangle=\langle v, \lambda \rangle=0$.
So as a character of $T$, $v$ has a non-empty strictly semi-stable locus, and we have shown that
\[
i(\Wall(G)) \subset \Wall(T)^W.
\]
This means that the boundary of $i(\Amp(Q))$ is not contained in $\Amp(Q^{\ab})^W$. Since both are full dimensional cones in the $W$ invariant subspace, the inclusion $i(\Amp(Q)) \subset \Amp(Q^{\ab})^W$ is in fact an equality. 
\end{proof}

\begin{eg} Consider again the example 
\begin{center}
  \includegraphics[scale=0.5]{plucker.pdf} 
\end{center}
The Abelianization of this quiver is
\begin{center}
  \includegraphics[scale=0.5]{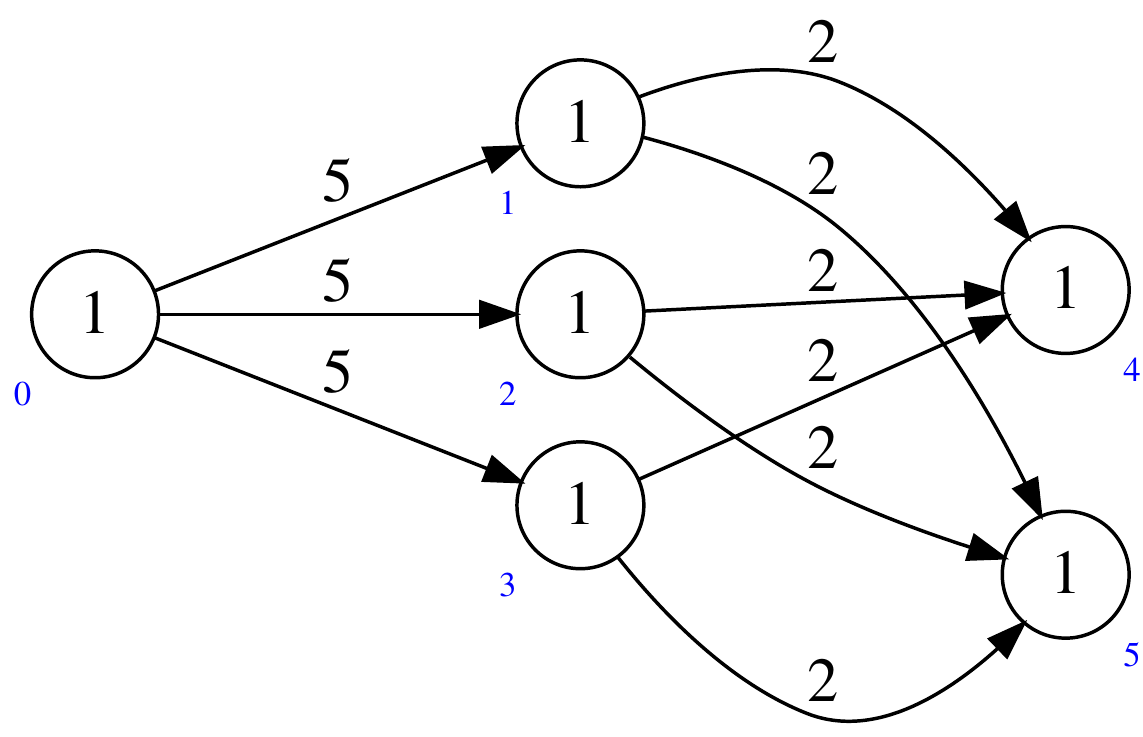} 
\end{center}
Walls are generated by collections of divisors that generate cones of codimension 1. We then intersect them with the Weyl invariant subspace, generated by $(1,1,1,0,0)$ and $(0,0,0,1,1)$. In this subspace, the walls are generated by 
\[
(1,1,1,0,0),\quad (0,0,0,1,1),\quad (-2, -2, -2, 3, 3).
\]
This is consistent with the previous example. 
\end{eg}

\subsection{The toric case}

As a prelude to determining the ample cone of a general quiver flag variety, we first consider the toric case.  Recall that a smooth projective toric variety (or orbifold) can be obtained as a GIT quotient of $\C^N$ by an $r$-dimensional torus. 

\begin{mydef} The \emph{GIT data} for a toric variety is an $r$-dimensional torus $K$ with cocharacter lattice $L=\Hom(\Cstar, K)$, and $N$ characters $D_1, \dots, D_N \in L^\vee$, together with a stability condition $w \in L^\vee \otimes \R$. 
\end{mydef} 

These linear data give a toric variety (or Deligne--Mumford stack) as the quotient of an open subset $U_w \subset \C^N$ by $K$, where $K$ acts on $\C^N$ via the map $K \rightarrow (\C^{*})^{N}$ defined by the $D_i$. $U_w$ is defined as
\[
\Big\{ (z_1,\dots,z_N) \in \C^N \; \Big|\; w \in \text{Cone}(D_i: z_i \neq 0) \Big\},
\]
that is, its elements can have zeroes at $z_i, i \in I$, only if $w$ is in the cone generated by $D_i$, $i \not \in I$. Assume that all cones given by subsets of the divisors that contain $w$ are full dimensional, as is the case for toric quiver flag varieties. Then the ample cone is the intersection of all of these.

In \cite{CrawSmith}, the GIT data for a quiver flag variety is detailed; we present it slightly differently. The torus is $K=(\C^*)^\rho$. Let $e_1,\ldots,e_\rho$ be standard basis of $L^\vee = \Z^\rho$ and $e_0=0.$ Then each $a \in Q_1$ gives a weight $D_a=-e_{s(a)}+e_{t(a)}.$ The stability condition is $\one = (1,1,\ldots,1)$. Identify $L^\vee \cong \Pic M(Q,\one).$ Then $D_a=W_a := W_{s(a)}^* \otimes W_{t(a)}.$
 
A minimal full dimensional cone  for a toric quiver flag variety is given by $\rho$ linearly independent $D_{a_i}$,~$a_i \in Q_1$. Therefore for each vertex $i$ with $1 \leq i \leq \rho$, we need an arrow $a_i$ with either $s(a)=i$ or $t(a)=i$, and these arrows should be distinct. For the positive span of these divisors to contain $\one$ requires that $D_{a_i}$ has $t(a_i)=i.$ Fix such a set $S = \{a_1, \dots, a_\rho\}$, and denote the corresponding cone by $C_S$. As mentioned, the ample cone is the intersection of such cones $C_S$. The set $S$ determines a path from $0$ to $i$ for each $i$, given by concatenating (backwards) $a_i$ with $a_{s(a_i)}$ and so on; let us write $f_{ij}=1$ if $a_j$ is in the path from $0$ to $i$, and $0$ otherwise. Then 
\[
e_i =\sum_{j=1}^\rho f_{ij} D_{a_j}.
\]
This gives us a straightforward way to compute the cone $C_S$. Let $B_S$ be the matrix with columns given by the $D_{a_i}$, and let $A_S=B_S^{-1}$. The columns of $A_S$ are given by the aforementioned paths: the $j$th column of $A_S$ is $\sum_{i=1}^\rho f_{ij} e_i$. If $c \in \Amp(Q)$, then $A_S c \in A_S \Amp(Q) \subset A_S C_S$. Since $A_S D_{a_i}=e_i$, this means that $A_S c$ is in the positive orthant. 

\begin{prop} Let $M(Q,\one)$ be a toric quiver flag variety. Let $c \in \Amp(Q)$, $c=(c_1,\dots,c_\rho)$, be an ample class, and suppose that vertex $i$ of the quiver $Q$ satisfies the following condition: for all $j \in Q_0$ such that $j>i$, there is a path from $0$ to $j$ not passing through $i$. Then $c_i>0$.
\end{prop}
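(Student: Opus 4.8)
The plan is to isolate one minimal cone $C_S$ from the toric GIT description and observe that every ample class lies in its interior with all coordinates (in the basis $D_{a_1},\dots,D_{a_\rho}$) positive. First I would recall that a valid $S$ is simply a choice, for each vertex $k$ with $1\le k\le\rho$, of an arrow $a_k$ into $k$: the cone $C_S=\mathrm{Cone}(D_{a_1},\dots,D_{a_\rho})$ is then simplicial, since $D_{a_k}=e_k-e_{s(a_k)}$ with $s(a_k)<k$ makes $B_S$ unitriangular, and $\one$ lies in its interior (the entries of $A_S\one$ count the arrows in the $S$-paths from $0$, hence are positive), so $C_S$ is one of the cones whose intersection is $\Amp(Q)$; in particular $\Amp(Q)\subseteq C_S$. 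Since the ample cone is open — it is the open GIT chamber of $\one$ — any ample class $c$ then lies in the interior of $C_S$, i.e. $c=\sum_{k=1}^{\rho}\mu_k D_{a_k}$ with every $\mu_k>0$.

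The main step, and the only point where the hypothesis on vertex $i$ is used, is to choose $S$ so that none of the arrows $a_k$ has source $i$. For $k\le i$ this is automatic, since $s(a_k)<k\le i$. For $k>i$ I would invoke the hypothesis: it furnishes a path from $0$ to $k$ avoiding $i$, and the final arrow of that path runs into $k$ from a vertex different from $i$; take $a_k$ to be that arrow. (The remaining $a_k$ may be chosen arbitrarily, since every vertex $\ge 1$ has an incoming arrow, $0$ being the unique source.) For this $S$ we have $s(a_k)\ne i$ for all $k\in\{1,\dots,\rho\}$.

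Finally, expanding $c=\sum_k\mu_k(e_k-e_{s(a_k)})$ with the convention $e_0=0$, the coefficient of $e_i$ on the right is $\mu_i-\sum_{k:\,s(a_k)=i}\mu_k=\mu_i$, the sum being empty by the choice of $S$; hence $c_i=\mu_i>0$. I expect the only real obstacle to be spotting the right $S$ in the middle step: one must notice that a path from $0$ to $j$ avoiding $i$ necessarily ends in an arrow into $j$ whose source is not $i$, which is precisely what removes the negative contributions to the $e_i$-coefficient. The simpliciality of $C_S$, the openness of the ample cone, and the final expansion are all routine.
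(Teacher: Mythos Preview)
Your proof is correct and follows essentially the same approach as the paper: choose $S$ so that no chosen arrow has source $i$ (equivalently, so that every $S$-path to $j>i$ avoids $i$), observe that then the $i$th coordinate of $A_Sc$ is exactly $c_i$, and conclude from $A_Sc$ lying in the open positive orthant. You are slightly more explicit than the paper about why such an $S$ exists (take the last arrow of a path avoiding $i$) and about the strict positivity (openness of the ample cone), but the argument is the same.
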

\begin{proof} Choose a collection $S$ of arrows $a_j \in Q_1$ such that the span of the associated divisors $D_{a_j}$ contains the stability condition $\one$, and such that the associated path from $0$ to $j$ for any $j>i$ does not pass through $i$. Then the $(i,i)$ entry of $A_S$ is $1$ and all other entries of the $i^{th}$ row are zero. As $A_S c$ is in the positive orthant, $c_i>0.$
\end{proof}
\begin{cor}
Let $M(Q,\br)$ be a quiver flag variety, not necessarily toric. If $c=(c_1,\dots,c_\rho) \in \Amp(Q)$ and $r_j>1$, then $c_j > 0$.
\end{cor}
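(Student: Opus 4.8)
The plan is to reduce to the toric case via Abelianization. Given an ample class $c=(c_1,\dots,c_\rho)\in\Amp(Q)$ and a vertex $j$ with $r_j>1$, I would pass to the image $i(c)\in\Pic(M_{Q^{\ab}})\otimes\R$ under the map $i$ of the preceding subsection. By the Proposition identifying $i(\Amp(Q))=\Amp(Q^{\ab})^W$, the class $i(c)$ is ample on the toric quiver flag variety $M_{Q^{\ab}}$; and since $i$ sends $c_1(W_p)\mapsto\sum_{k=1}^{r_p}c_1(W_{v_{pk}})$, in the basis $\{c_1(W_{v_{pk}})\}$ of $\Pic(M_{Q^{\ab}})$ the coordinate of $i(c)$ at the vertex $v_{p,k}$ is exactly $c_p$. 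So it suffices to show that the $v_{j,1}$-coordinate of the ample class $i(c)$ is positive, and for this I would invoke the toric Proposition above applied to $Q^{\ab}$ at the vertex $v_{j,1}$.

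First I would order the vertices of $Q^{\ab}$ lexicographically by $(p,k)$: then $v_{0,1}$ is the unique source (no arrow of $Q$ ends at $0$, so none of $Q^{\ab}$ ends at $v_{0,1}$), every arrow of $Q^{\ab}$ increases the order, and $Q^{\ab}$ is finite and acyclic. It then remains to check the hypothesis of the toric Proposition for $v_{j,1}$: for every vertex $w>v_{j,1}$ of $Q^{\ab}$ there is a path in $Q^{\ab}$ from $v_{0,1}$ to $w$ not passing through $v_{j,1}$. The vertices $w>v_{j,1}$ are the $v_{j,k}$ with $k\ge2$ and the $v_{p,k}$ with $p>j$. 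Since $Q$ has a unique source and is finite and acyclic, every vertex of $Q$ is joined to $0$ by a path, and every such path is automatically simple; I would fix such a path $0\to\cdots\to j$ (resp. $0\to\cdots\to p$) in $Q$ and lift it to $Q^{\ab}$ by assigning second index $1$ to each internal vertex and the second index prescribed by $w$ to the terminal vertex, except that if the chosen $Q$-path runs through $j$ --- which, being simple, happens at most once, and in the first case not at all --- I assign second index $2$ there, which is possible precisely because $r_j>1$. Each step of the lifted walk is a genuine arrow of $Q^{\ab}$ since the corresponding step of the $Q$-path is, and the lift never meets $v_{j,1}$. This verifies the hypothesis, and the toric Proposition then gives that the $v_{j,1}$-coordinate of $i(c)$ is positive, i.e. $c_j>0$.

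The step that needs genuine care is the lifting in the case $w=v_{p,k}$ with $p>j$: a path from the source to such a $w$ may be forced to run through $j$ inside $Q$, and it is exactly here that the hypothesis $r_j>1$ is used, to reroute the lifted path around $v_{j,1}$ through $v_{j,2}$. The remaining ingredients --- that $v_{0,1}$ is the unique source of $Q^{\ab}$, that the lexicographic order is compatible with the arrows, and that $i(c)$ is ample with the stated coordinates --- are immediate from the description of $Q^{\ab}$ and of $i$ recalled above.
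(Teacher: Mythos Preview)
Your proposal is correct and follows essentially the same approach as the paper: pass to the Abelianized quiver via the identification $i(\Amp(Q))=\Amp(Q^{\ab})^W$, observe that the $v_{j,1}$-coordinate of $i(c)$ equals $c_j$, and verify the hypothesis of the preceding toric Proposition by rerouting any path through $v_{j,1}$ via $v_{j,2}$, which is available precisely because $r_j>1$. The paper's proof is more terse (it simply asserts the rerouting is possible), while you spell out the lexicographic ordering and the lifting explicitly, but the argument is the same.
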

\begin{proof}
Consider the Abelianized quiver. For any vertex $v \in Q^{\ab}_0$, we can always choose a path from the origin to $v$ that does not pass through $v_{j1}$: if there is an arrow between $v_{j1}$ and $v$, then there is an arrow between $v_{j2}$ and $v$, so any path through $v_{j1}$ can be rerouted through $v_{j2}$. Then we obtain that the $j1$ entry of $i(c)$ is positive -- but this is just $c_j.$\end{proof}

\subsection{The ample cone of a quiver flag variety}

Let $M(Q,\br)$ be a quiver flag variety and $Q'$ be the associated Abelianized quiver. For each $i \in \{1,\dots,\rho\}$, define
\[
T_i:=\{j \in Q_0 \mid \text{all paths from $0$ to $j$ pass through $i$}\},
\]
but pretending that a path can always circumvent a vertex $j$ when $r_j>1$. This is motivated by the path structure of the Abelianized quiver.  Note that $i \in T_i$, and that if $r_i=1$ then $T_i=\{i\}$. 

\begin{thm} \label{thm:nef cone}
The nef cone of $M(Q,\br)$ is given by the following inequalities. Suppose that $a=(a_1,\dots,a_\rho) \in \Pic(M_Q)$. Then $a$ is nef if and only if
\begin{align} \label{eq:nef cone}
  \sum_{j \in T_i} r_j a_j \geq 0 &&i = 1,2,\ldots,\rho.
\end{align}
\end{thm}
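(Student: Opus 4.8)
The plan is to reduce the non-Abelian case to the toric (Abelian) case using the Proposition that identifies $i(\Amp(Q))$ with $\Amp(Q^{\ab})^W$, and then to compute the nef cone of the Abelianized toric quiver flag variety $M(Q^{\ab},\one)$ explicitly using the combinatorial description of the cones $C_S$ developed in the toric subsection. Concretely, a class $a=(a_1,\dots,a_\rho)$ is nef on $M_Q$ if and only if $i(a)=\big(\sum_{j} \dots\big)$ — more precisely $i(a)$ has entry $a_j$ in each of the $r_j$ coordinates $v_{j1},\dots,v_{jr_j}$ — lies in $\Nef(Q^{\ab})$; since $i(a)$ is automatically $W$-invariant, the condition $i(a)\in\Nef(Q^{\ab})^W$ is the same as $i(a)\in\Nef(Q^{\ab})$. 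So everything comes down to understanding $\Nef(Q^{\ab})$ and then restricting the defining inequalities to the $W$-invariant subspace.

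The key steps, in order, are as follows. First, recall that $\Nef(Q^{\ab})=\bigcap_S C_S$, where $S$ ranges over choices of one incoming arrow $a_j$ (in $Q^{\ab}$) at each non-source vertex $v$, with the span of the $D_{a_j}$ containing $\one$; and $c\in C_S$ iff $A_S c$ lies in the closed positive orthant, where the rows of $A_S$ encode the paths $f_{\bullet\bullet}$ determined by $S$. So $c$ is nef iff for every such $S$ and every vertex $v$ of $Q^{\ab}$, $\sum_{w} f^S_{vw}\, c_w\ge 0$, i.e.\ the sum of $c_w$ over the vertices $w$ lying on the $S$-path from $0$ to $v$ is $\ge 0$. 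Second, I would show that, as $S$ varies, the relevant minimal such path-sums through a vertex $v$ are exactly those of the form $\sum_{w\in P} c_w$ where $P$ is a path from $0$ to $v$ in $Q^{\ab}$, and conversely every such path arises from some valid $S$; hence $\Nef(Q^{\ab})$ is cut out by $\sum_{w\in P} c_w\ge 0$ for all directed paths $P$ from the source to a vertex. Third, I would intersect with the $W$-invariant subspace: writing $c=i(a)$, the path-sum $\sum_{w\in P}c_w$ becomes a sum of the $a_j$'s weighted by how many vertices $v_{jk}$ of the fibre over $j$ the path $P$ meets. Fourth — the crucial combinatorial identification — I would argue that the sharpest (smallest) such weighted sum, as $P$ ranges over all source-to-$v_{ik}$ paths that are ``forced'' to accumulate the full fibre over $i$, is exactly $\sum_{j\in T_i} r_j a_j$: a path into the fibre over $i$ must, by the definition of $T_i$ (``all paths from $0$ to $j$ pass through $i$, pretending a path can circumvent $j$ when $r_j>1$''), pass through every one of the $r_j$ copies $v_{j1},\dots,v_{jr_j}$ for each $j\in T_i$, contributing $r_j a_j$, while for $j\notin T_i$ one can choose $P$ and $S$ so that the fibre over $j$ is avoided entirely (exactly as in the proof of the Corollary above, rerouting through $v_{j2}$). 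This shows the inequalities \eqref{eq:nef cone} are necessary, and a dimension/extremal-ray count (both sides are full-dimensional rational polyhedral cones in the $\rho$-dimensional $W$-invariant space, and there are $\rho$ inequalities, one per $i$, which one checks are irredundant) shows they suffice — alternatively, one checks directly that every path-sum inequality is implied by the $\rho$ inequalities \eqref{eq:nef cone}.

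The main obstacle I anticipate is the bookkeeping in the third and fourth steps: matching directed paths in the Abelianized quiver $Q^{\ab}$ (which fibres, vertex by vertex, over $Q$ with fibre size $r_j$ over $j$) against the sets $T_i$, and in particular making precise and correct the ``pretend a path can circumvent $j$ when $r_j>1$'' convention so that it genuinely corresponds to the rerouting $v_{j1}\rightsquigarrow v_{j2}$ available in $Q^{\ab}$. One must be careful that the arrows of $Q^{\ab}$ go only between $v_{ij}$ and $v_{ik}$ for \emph{$i<k$} (lifted from arrows $i\to k$ in $Q$), so a path entering the fibre over $i$ cannot leave it and re-enter, which is exactly what forces it to sweep out the whole fibre $\{v_{i1},\dots,v_{ir_i}\}$ once $T_i$ pins $i$ down; getting this ``swept exactly once'' claim right, and handling the edge case $r_i=1$ where $T_i=\{i\}$ and \eqref{eq:nef cone} just reads $a_i\ge0$, consistent with the Corollary, is where the argument needs the most care. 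The identification of $\Nef$ with the intersection of the $C_S$ (using that $M_Q$ is a Mori Dream Space, so GIT and Mori chamber structures agree, already invoked in the excerpt) and the reduction to the $W$-invariant slice are then routine given the machinery assembled above.
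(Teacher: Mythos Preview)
Your overall strategy --- reduce to the Abelianization via the identification $i(\Amp(Q))=\Amp(Q^{\ab})^W$, compute the toric nef cone of $M(Q^{\ab},\one)$ as $\bigcap_S C_S$, then restrict to the $W$-invariant subspace --- is exactly the paper's approach. The gap is in your computation of the inequalities defining $C_S$.

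You claim that the inequality at a vertex $v$ reads $\sum_{w\in P_v} c_w\ge 0$, the sum over vertices on the $S$-path from $0$ to $v$. This is backwards. Recall that the $j$th column of $A_S$ is the indicator of the path from $0$ to $j$; hence the $(i,j)$ entry of $A_S$ is $f_{ji}$, not $f_{ij}$, and the $i$th inequality $(A_S c)_i\ge 0$ reads
\[
\sum_{j\,:\,i\in P_j} c_j \;\ge\; 0,
\]
the sum over vertices $j$ whose $S$-path \emph{passes through} $i$. (A quick sanity check: for $0\rightrightarrows 1\rightrightarrows 2$ with $\br=(1,1,1)$, the nef cone is $c_1+c_2\ge 0$, $c_2\ge 0$; your formula gives $c_1\ge 0$, $c_1+c_2\ge 0$, which is a different cone.) This reversal propagates into your step~4 and makes it fail: a single directed path in $Q^{\ab}$ visits at most one vertex in each fibre $\{v_{j1},\dots,v_{jr_j}\}$, since there are no arrows within a fibre, so your claim that a path ``must pass through every one of the $r_j$ copies'' is impossible.

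The correct picture is the mirror image of what you wrote. The set $T_i$ records the \emph{downstream} vertices $j$ for which every path from $0$ to $v_{jl}$ is forced through $v_{i1}$ (when $r_i=1$; when $r_i>1$ one can always reroute through another copy, so $T_i=\{i\}$ --- you have this case reversed too). To extract the inequality $\sum_{j\in T_i} r_j a_j\ge 0$, one chooses $S$ so that the paths $P_{kl}$ avoid $v_{i1}$ whenever possible (i.e.\ for $k\notin T_i$), and chooses the paths symmetrically in $l$ so that for each $k\in T_i$ all $r_k$ paths $P_{k1},\dots,P_{kr_k}$ do pass through $v_{i1}$; restricting to $b_{kl}=a_k$ then gives the coefficient $r_k$. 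This yields necessity. For sufficiency the paper runs a downward induction on $(i,j)$, showing that for any choice of $S$ the inequality at $v_{ij}$ follows from the inequality for $T_i$ plus already-established inequalities at later vertices. Your proposed ``dimension/extremal-ray count'' would also work once the direction is fixed, but the inductive argument is cleaner.
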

\begin{proof}
We have already shown that the Weyl invariant part of the nef cone of $M_{Q'}:=M(Q',\one)$ is the image of the nef cone of $M_Q:=M(Q,\br)$ under the natural map $\pi: \Pic(M_Q) \to \Pic(M_{Q'})$. Label the vertices of $Q'$ as $v_{ij}$, $i \in \{0,\dots,\rho\}$, $j \in \{1, \dots,r_i\},$ and index elements of $\Pic(M_{Q'})$ as $(b_{ij}).$ The inequalities defining the ample cone of $M_{Q'}$ are given by a choice of arrow $A_{ij} \in Q'_1, t(A_{ij})=v_{ij}$ for each $v_{ij}$.  This determines a path $P_{ij}$ from $0 \to v_{ij}$ for each vertex~$v_{ij}$. For each $v_{ij}$ the associated inequality is:
\begin{equation}\label{ine} \sum_{v_{ij} \in P_{kl}} b_{kl} \geq 0.
\end{equation}

Suppose that $a$ is nef. First we show that $a$ satisfies the inequalities \eqref{eq:nef cone}. We have shown that $a_j \geq 0$ if $r_j>1$, so it suffices to consider $i$ such that $r_i=1$. For each such $i$, take a choice of arrows such that if $v_{kl} \in P_{i1}, k \in T_i.$ Moreover, choose arrows such that $s(A_{ik})= s(A_{il})$ for all $k,l$. Then this gives the inequalities \eqref{eq:nef cone}, after restricting to the Weyl invariant locus. Therefore, if $C$ is the cone defined by \eqref{eq:nef cone}, we have shown that $\Nef(M_Q) \subset C$. 

Suppose now that $a \in C$ and take a choice of arrows $A_{kl}$. Write $\pi(a)=(a_{ij})$. We prove that the inequalities \ref{ine}  are satisfied starting at $v_{\rho \rho}$. For $\rho$, the inequality is $a_{\rho \rho} \geq 0$, which is certainly satisfied. Suppose the $(i j+1),(i j+2),\dots,(\rho \rho)$ inequalities are satisfied. The inequality we want to establish for $(ij)$ is 
\[
a_{i}+\sum_{k \in T_i-\{i\}} r_k a_{k} + X=a_{ij}+\sum_{k \in T_i-\{i\}} \sum_{l=1}^{r_l} a_{kl} + \Gamma \geq 0,
\]
where
\[
\Gamma=\sum_{s(A_{kl})=v_{ij}, k \not \in T_i}\left(a_{kl}+\sum_{v_{kl} \in P_{st}} a_{st}\right).
\]
As $a \in C$ it suffices to show that $\Gamma \geq 0.$ By the induction hypothesis $a_{kl}+\sum_{v_{kl} \in P_{st}} a_{st} \geq 0$,
and therefore $\Gamma \geq 0$. This shows that $\pi(a)$ satisfies \eqref{ine}. 
\end{proof}

\subsection{Nef line bundles are globally generated}

We conclude this section by proving that nef line bundles on quiver flag varieties are globally generated. This is well-known for toric varieties.  This result will be important for us because in order to use the Abelian/non-Abelian Correspondence to compute the quantum periods of quiver flag zero loci, we need to know that the bundles involved are convex.  Convexity is a difficult condition to understand geometrically, but it is implied by global generation.

Let $M(Q,\br)$ be a quiver flag variety and $Q'$ be the associated Abelianized quiver.   For each $i \in \{1,\dots,\rho\}$, let $T_i$ be as above, and define:
\begin{itemize}[topsep=0.1ex, itemsep=0pt,parsep=0pt]
\item $H_i:=\{j \in Q_0 \mid \text{$j \ne 0$ and all paths from $0$ to $i$ pass through $j$}\}$;
\item $h(i):=\max{H_i}; h'(i):=\min{H_i}$; and
\item $R_i:=\{j \mid h'(j)=i\}$;
\end{itemize}
again pretending that a path can always circumvent a vertex $j$ when $r_j>1.$ Note that $i \in H_i$.  Observe too that the $R_i$ give a partition of the vertices of $Q$, and that if $j$,~$k \in T_i$, then $j$,~$k \in R_{h'(i)}$.
\begin{prop} \label{pro:nef}
Let $L$ be a nef line bundle on $M(Q,r)$. Then $L$ is globally generated.
\end{prop}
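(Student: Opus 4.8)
The plan is to reduce the statement to the toric case, where nef line bundles are globally generated, by combining Theorem~\ref{thm:nef cone} (which describes the nef cone of $M_Q$ via the combinatorial data $T_i$) with the Abelianization machinery and the tower-of-Grassmannian-bundles structure from \S\ref*{sec:quiver flag varieties}\ref{sec: tower of Grassmannian bundles}. First I would write a nef class $L = (a_1,\ldots,a_\rho)$ in terms of the determinant line bundles $\det W_i$, and use the inequalities \eqref{eq:nef cone} to produce an explicit presentation of $L$ as a tensor product of line bundles that are manifestly globally generated. The natural candidates are the $\det W_i$ themselves (which are globally generated, being pullbacks of tautological quotient bundles, as noted in \S\ref*{sec:quiver flag varieties}\ref{sec: tower of Grassmannian bundles}) together with line bundles pulled back from the intermediate Grassmannian bundles $Y_i$; tensor products of globally generated line bundles are globally generated, so it suffices to exhibit such a decomposition.

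Concretely, I expect the combinatorial sets $H_i$, $h(i)$, $h'(i)$ and $R_i$ introduced just before the proposition to be exactly what is needed: the partition of $Q_0$ into the blocks $R_i$ should let one group the summands of $L$ so that each group, when restricted along the tower, becomes a nef --- hence globally generated --- line bundle on a Grassmannian bundle, and one argues that a nef line bundle on a projectivized-bundle-type space (a Grassmannian bundle $\Gr(\mathcal F, r)$ with $\mathcal F$ globally generated) pulled back from relative $\mathcal{O}(1)$-type classes is globally generated when the base contributions are. The key identity $\sum_{j \in T_i} r_j a_j \ge 0$ should be rephrased as the statement that a certain line bundle on the factor $Y_{h'(i)}$ is relatively nef, and then one invokes the inductive structure: by induction on $\rho$ the restriction of the ``lower'' part of $L$ to $Y_{\rho-1}$ is globally generated, and the ``top'' contribution involving $W_\rho$ and the Grassmannian bundle $\Gr(\pi^*\mathcal F_\rho, r_\rho)$ is globally generated by Snow's theorem / Remark~\ref{gg} together with the nef inequality at the relevant vertex.

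The main obstacle, I expect, is bookkeeping: translating the single family of inequalities \eqref{eq:nef cone} --- which are indexed by all vertices $i$ and involve the sets $T_i$ --- into a statement about each stage of the tower $Y_\rho \to \cdots \to Y_0$ separately, and making sure that when one ``peels off'' the contribution of $\det W_\rho$ the remaining class on $Y_{\rho-1}$ is still nef so the induction applies. This is where the auxiliary data $h(i)$, $h'(i)$, $R_i$ earns its keep: $h'(i)$ identifies the lowest vertex through which every path to $i$ must pass, hence the stage of the tower at which the $i$-th inequality ``lives'', and the fact that $T_i \subset R_{h'(i)}$ (noted in the text) ensures the inequalities decouple across the blocks of the partition. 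Once the combinatorics is set up so that the nef inequalities at vertices in $R_k$ exactly guarantee relative global generation at the $k$-th stage, the global generation of $L$ follows by assembling the stages, using that pushforward/pullback along the smooth fibrations $Y_i \to Y_{i-1}$ preserves global generation of the line bundles in question. A secondary technical point is handling the vertices with $r_j > 1$: there the convention ``a path can circumvent $j$'' must be reconciled with the actual Abelianized quiver $Q'$, but this was already dealt with in the corollary showing $a_j > 0$ when $r_j > 1$, so those coordinates contribute globally generated (indeed ample) summands automatically.
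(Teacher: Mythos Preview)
Your identification of the $R_i$ partition as the key combinatorial device is correct and matches the paper: both arguments begin by reducing to the case $\{j : a_j \neq 0\} \subset R_i$ for a single $i$, using that the nef inequalities \eqref{eq:nef cone} decouple across the blocks (and that when $r_i > 1$ one has $R_i = \{i\}$, so $L$ is a nonnegative power of $\det W_i$).

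Where the approaches diverge is after this reduction. The paper does not use the tower structure inductively, nor does it descend from the Abelianization. Instead it constructs sections directly on the GIT model: a section of $L$ is a $G$-equivariant polynomial on $\Rep(Q,\br)$ with character $\prod_j \chi_j^{a_j}$, and the paper writes such polynomials down explicitly. For each $j \in R_i$ one forms a polynomial $f_j$ of degree $d_j = \sum_{k \in T_j} r_k a_k \geq 0$ in the maps along paths $h(j) \to j$ (or in the minors of the matrix of such paths when $r_j > 1$); the product $\prod_{j \in R_i} f_j$ has the correct character by a telescoping computation using the nesting of the $T_j$'s, and for any semistable point some choice of the $f_j$ is nonvanishing. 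This is a one-shot construction with no induction.

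Your inductive tower approach is plausible but has a genuine gap as stated. Within a block $R_i$ some coordinates $a_j$ may be negative (subject only to $\sum_{k \in T_j} r_k a_k \geq 0$), so $L$ is not a tensor product of $\det W_j$'s with nonnegative exponents, and ``peeling off the top vertex'' does not obviously leave a nef class on $Y_{\rho-1}$: the nef inequalities for $M_Q$ are indexed by the $T_i$, which need not respect the linear ordering of vertices, so there is no clean statement that the restriction of $L$ to $Y_{\rho-1}$ (whatever that means when $L$ involves $W_\rho$) satisfies the nef inequalities there. What one actually needs is, for each $j$, the relative bundle $\det(W_{h(j)})^{-d_j} \otimes \det(W_j)^{d_j}$, which is globally generated on the total space and absorbs the negative contribution --- but this is precisely what the paper's path-polynomial $f_j$ exhibits, and it is not an output of an induction on $\rho$. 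Finally, the framing ``reduce to the toric case'' is misleading: sections on the Abelianization $M_{Q^{\ab}}$ do not obviously descend to $M_Q$, and the paper makes no use of that direction.
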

\begin{proof}

Let $L$ be a nef line bundle on $M(Q,\br)$ given by $(a_1,\dots,a_\rho)$. A section of $L$ is a $G$-equivariant section of the trivial line bundle on $\Rep(Q,\br)$, where the action of $G$ on the line bundle is given by the character $\prod \chi_i^{a_i}.$ A point of $\Rep(Q,\br)$ is given by $(\phi_a)_{a \in Q_1}, \phi_a:\C^{r_{s(a)}} \to \C^{r_{t(a)}},$ where $G$ acts by change of basis. A choice of path $i \to j$ on the quiver gives an equivariant map $\Rep(Q,\br) \to \Hom(\C^{r_i},\C^{r_j})$ where $G$ acts on the image  by $g \cdot \phi = g_j \phi g_i^{-1}.$ If $r_i=r_j=1$, such maps can be composed. 

We will show that $L$ is globally generated. It suffices to show this in the case that $\{j \mid a_j \neq 0\} \subset R_i$ for some $i$. This is because if $j \in R_i$, the inequalities in Theorem~\ref{thm:nef cone} which involve $j$ only involve $a_k$ for $k \in R_i$. So if $L=L_1 \otimes \cdots \otimes L_\rho$ such that $L_i$ only has non-zero powers of $\det(W_j)$, $j \in R_i$, then $L$ is nef if and only if all the $L_i$ are nef. So suppose that $\{j \mid a_j \neq 0\} \subset R_i.$ If $r_i>1,$ then $R_i=\{i\}$ and $L=\det(W_i)^{\otimes a_i}$, which is known to be globally generated. So we further assume that $r_i=1$.

Let $f_i$ be any homogenous polynomial of degree $d_i=\sum_{k \in T_i} r_k a_k \geq 0$ in the maps given by paths $0 \to i$. That is, $f_i$ is a $G$-equivariant map $\Rep(Q,\br) \to \C$, where $G$ acts on the image with character $\chi_i^{d_i}$.  For $j$ in $R_i$ with $r_j=1$ and $j \ne i$, let $f_j$ be any homogenous polynomial of degree $\sum_{k \in T_j} r_k a_k\geq 0$ in the maps given by paths $h(j) \to j$ (note that by definition, $r_{h(j)}=1$). That is, $f_j$ is a $G$-equivariant map $\Rep(Q,\br) \to \C$, where $G$ acts on the image with character $\chi_{h(j)}^{-d_j} \chi_j^{d_j}$.  For $j$ in $R_i$ with $r_j>1$, let $f_j$ be a homogenous polynomial of degree $a_k \geq 0$ in the minors of the matrix whose columns are given by the paths $h(j) \to j$. That is, $f_j$ is a $G$-equivariant map $\Rep(Q,\br) \to \C$, where $G$ acts on the image with character $\chi_{h(j)}^{-r_j a_j} \chi_j^{a_j}$. For any $x \in \Rep(Q,\br)$ which is semi-stable, and for any $j \in R_i$, there exists such an $f_j$ with $f_j(x) \neq 0$, by construction of $h(j)$. Define $s':=\prod_{j \in R_i} f_j :\Rep(Q,\br) \to \C$, and take the section $s$ to be a sum of such $s'.$ Then $s$ is a $G$-equivariant map $\Rep(Q,\br) \to \C$, where $G$ acts on the image with character 
$$\prod_{j \in R_i} \chi_j^{b_j}=\chi_i^{d_i} \cdot \prod_{j \in R_i, j \neq i, r_j=1} \chi_{h(j)}^{-d_j} \chi_j^{d_j} \cdot \prod_{j \in R_i, j \neq i, r_j>1} \chi_{h(j)}^{-r_j a_j} \chi_j^{a_j}.$$

We need to check that $b_j=a_j$ for all $j$. This is obvious for $j \in R_i$ with  $r_j>1.$ For $j=i$, 
\[
b_i=\sum_{j \in T_i} r_j a_j - \sum_{k \in R_i, k \neq i, h(k)=i} \sum_{j \in T_k} r_k a_k.
\]
This simplifies to $a_i$ in view of the fact that for all $j \in T_i$, there is a unique $k$ such that $h(k)=i$ and $j \in T_k$. The check for $j \in R_i$ with $r_j=1$ is similar. Therefore $s$ gives a well-defined section of $L$. For any $x \in \Rep(Q,\br)$ semi-stable, there exists an $s$ such that $s(x) \neq 0,$ so $L$ is globally generated. 

\end {proof}

\section{The Abelian/non-Abelian Correspondence}

The main theoretical result of this paper, Theorem~\ref{thm:AnA} below, proves the Abelian/non-Abelian Correspondence with bundles~\cite[Conjecture~6.1.1]{CiocanFontanineKimSabbah2008} for quiver flag zero loci.  This determines all genus-zero Gromov--Witten invariants, and hence the quantum cohomology, of quiver flag varieties, as well as all genus-zero Gromov--Witten invariants of quiver flag zero loci involving cohomology classes that come from the ambient space.  In particular, it determines the \emph{quantum period} of a quiver flag varieties or quiver flag zero locus $X$ with $c_1(TX) \geq 0$.

\subsection{A brief review of Gromov--Witten theory} \label{sec:review}
We give a very brief review of Gromov--Witten theory, mainly to fix notation,  See \cite{CoatesCortiGalkinKasprzyk2016,CiocanFontanineKimSabbah2008} for more details and references.  Let $Y$ be a smooth projective variety. Given $n \in \Z_{\geq 0}$ and $\beta \in H_2(Y)$, let  $M_{0,n}(Y,\beta)$ be the moduli space of genus zero stable maps to $Y$ of class $\beta$, and with $n$ marked points~\cite{Kontsevich95}. While this space may be highly singular and have components of different dimensions, it has a \emph{virtual fundamental class} $[M_{0,n}(Y,\beta)]^{virt}$ of the expected dimension~\cite{BehrendFantechi97,LiTian98}. There are natural evaluation maps $ev_i: M_{0,n}(Y,\beta) \to Y$ taking a class of a stable map $f: C \to Y$ to $f(x_i)$, where $x_i \in C$ is the $i^{th}$ marked point. There is also a line bundle $L_i \to M_{0,n}(Y,\beta)$ whose fiber at $f: C \to Y$ is the cotangent space to $C$ at $x_i$. The first Chern class of this line bundle is denoted $\psi_i$. Define:
\begin{equation}
  \label{eq:GW}
  \langle \tau_{a_1}(\alpha_1),\dots,\tau_{a_n}(\alpha_n) \rangle_{n,\beta} = \int_{[M_{0,n}(Y,\beta)]^{virt}} \prod_{i=1}^n ev_i^*(\alpha_i) \psi_i^{a_i}
\end{equation}
where the integral on the right-hand side denotes cap product with the virtual fundamental class.  If $a_i=0$ for all $i$, this is called a (genus zero) Gromov--Witten invariant and the $\tau$ notation is omitted; otherwise it is called a descendant invariant. It is deformation invariant. 

We consider a generating function for  descendant invariants called the \emph{J-function}. 
Write $q^{\beta}$ for the element of $\Q[H_2(Y)]$ representing $\beta \in H_2(Y)$. Write $N(Y)$ for the Novikov ring of $Y$:
\[
N(Y) = \left\{\sum_{\beta \in \NE_1(Y)} c_\beta q^\beta \: \middle| \: \begin{minipage}{3in}
$c_\beta \in \C$, for each $d \geq 0$ there are only finitely many $\beta$ such that $\omega \cdot \beta \leq d$ and $c_\beta \ne 0$ 
\end{minipage}
\right\}.
\]
Here $\omega$ is the K\"ahler class on $Y$.  The J-function assigns an element of $H^*(Y) \otimes N(Y)[[z^{-1}]]$ to every element of $H^*(Y)$, as follows. Let $\phi_1,\dots,\phi_N$ be a homogenous basis of $H^*(Y)$, and let $\phi^1,\dots,\phi^N$ be the Poincar\'e dual basis.  Then the J-function maps
\begin{equation}
  \label{eq:J}
  T \in H^*(Y) \mapsto 1+Tz^{-1}+z^{-1} \sum_i \langle\langle \phi_i/(z-\psi)\rangle\rangle \phi^i.
\end{equation}
Here $1$ is the unit class in $H^0(Y)$, and
\begin{equation}
  \label{eq:J_correlator}
  \langle\langle \phi_i/(z-\psi)\rangle\rangle=\sum_{\beta \in \NE_1(Y)} q^\beta \sum_{n=0}^\infty \sum_{a=0}^\infty\frac{1}{n! z^{a+1}} \langle \tau_a(\phi_i),T,\dots,T\rangle_{n+1,\beta}.
\end{equation}
The \emph{small} J-function is the restriction of the J-function to $H^0(Y)\oplus H^2(Y)$; closed forms for the small J-function of toric complete intersections and toric varieties are known~\cite{Givental98}. The quantum period $G_Y(t)$ is the component of $J(0)$ along $1 \in H^\bullet(Y)$ after the substitutions $z \mapsto 1$, $q^\beta \mapsto t^{\langle -K_Y,\beta \rangle}.$ This is a power series in $t$. The quantum period satisfies an important differential equation called the quantum differential equation. 

Let $X \subset Y$ be the zero locus of a generic section of a convex vector bundle $E \to Y$ and let $\be$ denote the total Chern class, which evaluates on a vector bundle $F$ of rank $r$ as 
\[
\be(F) = \lambda^r + \lambda^{r-1} c_1(F) + \cdots + \lambda c_{r-1}(F) + c_r(F).
\]
The notation here indicates that one can consider $\be(F)$ as the $\C^*$-equivariant Euler class of $F$, with respect to the canonical action of $\C^*$ on $F$ which is trivial on the base of $F$ and scales all fibers.  In this interpretation, $\lambda \in H^\bullet_{\C^*}(pt)$ is the equivariant parameter.  The twisted J-function $J_{\be,E}$ is defined exactly as the J-function \eqref{eq:J}, but replacing the virtual fundamental class which occurs there (via equations~\ref{eq:J_correlator} and~\ref{eq:GW}) by $[M_{0,n}(Y,\beta)]^{virt} \cap \be(E_{0,n,\beta})$, where $E_{0,n,\beta}$ is $\pi_*(ev_{n+1}^*(E))$, $\pi: M_{0,n+1}(Y,\beta) \to M_{0,n}(Y,\beta)$ is the universal curve, and $ev_{n+1}:M_{0,n}(Y,\beta) \to Y$ is the evaluation map. $E_{0,n,\beta}$ is a vector bundle over $M_{0,n}(Y,\beta)$, because $E$ is convex.  Functoriality for the virtual fundamental class~\cite{KimKreschPantev2003} implies that
\[
j^* J_{\be,E}(T)\big|_{\lambda=0} = J_X(j^*T)
\]
where $j:X \to Y$ is the embedding~\cite[Theorem~1.1]{Coates2014}.  Thus one can compute the quantum period of $X$ from the twisted J-function. We will use this to compute the quantum period of Fano fourfolds which are quiver flag zero loci. 

The Abelian/non-Abelian correspondence is a conjecture \cite{CiocanFontanineKimSabbah2008} relating the J-functions (and more broadly, the quantum cohomology Frobenius manifolds) of GIT quotients $V/\!\!/G$ and $V/\!\!/T$, where $T \subset G$ is the maximal torus. It also extends to considering zero loci of representation theoretic bundles, by relating the associated twisted J-functions. As the Abelianization $V/\!\!/T$ of a quiver flag variety $V/\!\!/G$ is a toric quiver flag variety, the Abelian/non-Abelian correspondence conjectures a closed form for the J-functions of Fano quiver flag zero loci. Ciocan-Fontanine--Kim--Sabbah proved the Abelian/non-Abelian correspondence (with bundles) when $V/\!\!/G$ is a flag manifold~\cite{CiocanFontanineKimSabbah2008}. We will build on this to prove the conjectures when $V/\!\!/G$ is a quiver flag variety.  

\subsection{The I-Function}

We give the J-function in the way usual in the literature: first, by defining a cohomology-valued hypergeometric function called the I-function (which should be understood as a mirror object, but we omit this perspective here), then relating the J-function to the I-function. We follow the construction given by \cite{CiocanFontanineKimSabbah2008} in our special case.  Let $X$ be a quiver flag zero locus given by $(Q,E_G)$, and write $M_Q=M(Q,\br)$ for the ambient quiver flag variety.  Let $(Q^{\ab}, E_T)$ be the associated Abelianized quiver and bundle, $M_{Q^{\ab}}=M(Q^{\ab},(1,\dots,1))$. Assume, moreover, that $E_T$ splits into nef line bundles; this implies that both $E_T$ and $E_G$ are convex. To define the I-function, we need to relate the Novikov rings of $M_Q$ and $M_{Q^{\ab}}$. Let $\Pic Q$ (respectively $\Pic Q^{\ab}$) denote the Picard group of $M_Q$ (respectively of $M_{Q^{\ab}}$), and similarly for the cones of effective curves and effective divisors. The isomorphism $\Pic Q  \to (\Pic Q^{\ab})^W$ gives a projection $p:\NE_1(M_{Q^{\ab}}) \to \NE_1(M_Q).$ In the bases dual to $\det(W_1),\dots,\det(W_\rho)$ of $\Pic M_Q$ and $W_{ij}, 1 \leq i \leq \rho, 1 \leq j \leq r_i$ of $\Pic M_{Q^{\ab}}$, this is
\[
p \colon (d_{1,1},\dots,d_{1,r_1},d_{2,1},\dots,d_{\rho,r_\rho}) \mapsto (\sum_{i=1}^{r_1} d_{1 i},\dots,\sum_{i=1}^{r_\rho} d_{\rho i}).
\]
For $\beta=(d_1,\dots,d_\rho)$, define
\[
\epsilon(\beta)=\sum_{i=1}^\rho {d_i(r_i-1)}.
\]
Then, following \cite[equation~3.2.1]{CiocanFontanineKimSabbah2008}, the induced map of Novikov rings $N(M_{Q^{\ab}}) \to N(M_Q)$ sends
\[
q^{\tilde{\beta}} \mapsto  (-1)^{\epsilon(\beta)} q^{\beta}
\]
where $\beta = p(\tilde{\beta})$.  We write $\tilde{\beta} \to \beta$ if and only if $\tilde{\beta} \in \NE_1(M_{Q^{\ab}})$ and $p(\tilde{\beta}) = \beta$.

For a representation theoretic bundle $E_G$ of rank $r$ on $M_Q$, let $D_1,\dots, D_r$ be the divisors on $M_{Q^{\ab}}$ giving the split bundle $E_T$. Given $\tilde{d} \in \NE_1(M_{Q^{\ab}})$ define 
\[
I_{E_G}(\tilde{d})=\frac{\prod_{i=1}^r \prod_{m \leq \langle \tilde{d}, D_i \rangle} (D_i+m z)}{\prod_{i=1}^r \prod_{m \leq 0} (D_i+m z)}.
\]
Notice that all but finitely many factors cancel here.  If $E$ is K-theoretically a representation theoretic bundle, in the sense that there exists $A_G, B_G$ such that
\[
0 \to A_G \to B_G \to E \to 0
\]
is an exact sequence, we define
\begin{equation}
  \label{eq:K_theory}
  I_E(\tilde{d})=\frac{I_{B_G(\tilde{d})}}{I_{A_G(\tilde{d})}}.
\end{equation}
      
\begin{eg}
The Euler sequence from Proposition \ref{euler} shows that for the tangent bundle $T_{M_Q}$
\[
I_{T_{M_Q}}(\tilde{d})=\frac{\prod_{a \in Q_1^{ab}}\prod_{m \leq 0} (D_a+m z)}{\prod_{a \in Q_1^{ab}}\prod_{m \leq \langle \tilde{d}, D_a \rangle} (D_a+m z)}\frac{\prod_{i=1}^\rho \prod_{j \neq k} \prod_{m \leq \langle \tilde{d}, D_{ij}-D_{ik} \rangle} (D_{ij}-D_{ik}+m z)}{\prod_{i=1}^\rho \prod_{j \neq k} \prod_{m \leq 0} (D_{ij}-D_{ik}+m z)}.
\]
Here $D_a$ is the divisor on $M_{Q^{ab}}$ corresponding to the arrow $a \in Q^{ab}_1$, and $D_{ij}$ is the divisor corresponding to the tautological bundle $W_{ij}$ for vertex $ij$.
\end{eg}

\begin{eg}
If $X$ is a quiver flag zero locus in $M_Q$ defined by the bundle $E_G$, then the adjunction formula implies that
\[
I_{T_X}(\tilde{d})=I_{T_{M_Q}}(\tilde{d}) I_{E_G}(\tilde{d}).
\]
\end{eg}

Define the I-function of $X \subset M_Q$ to be
\[
I_{X,M_Q} =\sum_{d \in \NE_1(M_Q)} \sum_{\tilde{d} \to d} (-1)^{\epsilon(d)} q^d I_{T_X}(\tilde{d}).
\]
Since
\begin{equation}\label{TX} 
I_{T_X}(\tilde{d})=\frac{\prod_{a \in Q_1^{ab}}\prod_{m \leq 0} (D_a+m z)}{\prod_{a \in Q_1^{ab}}\prod_{m \leq \langle \tilde{d}, D_a \rangle} (D_a+m z)}\frac{\prod_{i=1}^\rho \prod_{j \neq k} \prod_{m \leq \langle \tilde{d}, D_{ij}-D_{ik} \rangle} (D_{ij}-D_{ik}+m z)}{\prod_{i=1}^\rho \prod_{j \neq k} \prod_{m \leq 0} (D_{ij}-D_{ik}+m z)}
\end{equation}
is invariant under the action of the Weyl group on the $D_{ij}$, by viewing these as Chern roots of the tautological bundles $W_i$ we can express it as a function in the Chern classes of the $W_i$. We can therefore regard $I_{T_X}(\tilde{d})$ as an element of $H^\bullet(M_Q,\C)$.  Thus the I-function is an element of $H^\bullet(M_Q,\C) \otimes N(M_Q) \otimes \C(\!(z^{-1})\!)$.  If $X$ is Fano then 
\begin{equation}
  \label{eq:I_asymptotics}
  I_{X,M_Q}(z)=1+z^{-1} C+O(z^{-2})
\end{equation}
where $O(z^{-2})$ denotes terms of the form $\alpha z^{k}$ with $k \leq {-2}$ and $C \in  H^0(M_Q,\C) \otimes N(M_Q)$; furthermore $C$ vanishes if the Fano index of $X$ is greater than $1$.

\begin{thm} \label{thm:AnA} 
  Let $X$ be a Fano quiver flag zero locus given by $(Q,E_G)$, and let $j \colon X \to M_Q$ be the embedding of $X$ into the ambient quiver flag variety.  Then
  \[
  J_X(0) = e^{-c/z} j^* I_{X,M_Q}
  \]
  where $c = j^* C$.
\end{thm}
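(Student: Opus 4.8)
The plan is to reduce to the case of a product of Grassmannians, where the Abelian/non-Abelian Correspondence with bundles is a theorem of Ciocan-Fontanine--Kim--Sabbah, by exploiting the embedding constructed in Section~\ref{zero loci}. Concretely, by Proposition~\ref{prop:zl} the ambient quiver flag variety $M_Q$ is the zero locus in $Y := \prod_{i=1}^\rho \Gr(\tilde{s}_i, r_i)$ of a canonical section of $E_{\mathrm{arr}} := \bigoplus_{a \in Q_1,\, s(a) \neq 0} S_{s(a)}^* \otimes Q_{t(a)}$, and hence $X$ is deformation equivalent to the zero locus $Z$ of a generic section of $F := E' \oplus E_{\mathrm{arr}}$ on $Y$, where $E'$ is the vector bundle on $Y$ determined by the same $G = \prod_i \GL(r_i)$-representation as $E_G$. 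Both summands of $F$ are convex: $E'$ by the hypothesis that $E_T$ splits into nef line bundles, and $E_{\mathrm{arr}}$ because each $S_{s(a)}^* \otimes Q_{t(a)}$ is globally generated (Remark~\ref{gg}); note that $E_{\mathrm{arr}}$ is not a genuine $G$-representation bundle but is $K$-theoretically a difference of two such, which is all the I-function formalism \eqref{eq:K_theory} requires.

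First I would establish the Correspondence for the ambient space $Y$. Each factor $\Gr(\tilde{s}_i, r_i)$ is a flag manifold, for which the Abelian/non-Abelian Correspondence with bundles is proved in~\cite{CiocanFontanineKimSabbah2008}; since genus-zero Gromov--Witten theory, the $J$- and $I$-functions, the Novikov rings, and the Abelianization all behave multiplicatively under products of GIT quotients, the untwisted Correspondence holds for $Y$, and the with-bundles version for the convex bundle $F$ follows from it by the usual quantum Lefschetz / functoriality argument (using~\cite{KimKreschPantev2003,Coates2014}). This gives
\[
  J_Z(0) = e^{-c_Y/z}\, j_Y^* I_{Z,Y},
\]
where $j_Y \colon Z \hookrightarrow Y$ is the embedding, $I_{Z,Y}$ is the twisted Abelian/non-Abelian I-function of $Y$ built from the Abelianization $Y^{\ab} = \prod_i (\PP^{\tilde s_i - 1})^{r_i}$, and $c_Y = j_Y^* C_Y$ for $C_Y$ the $z^{-1}$-coefficient in \eqref{eq:I_asymptotics}. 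By deformation invariance of genus-zero Gromov--Witten invariants, $J_X(0) = J_Z(0)$.

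It then remains to identify $j_Y^* I_{Z,Y}$ with $j^* I_{X,M_Q}$. The essential point is a change-of-ambient-space identity for I-functions: the Abelianization $Y^{\ab}$ contains $M_{Q^{\ab}}$ (the Abelianization of $M_Q$) as the zero locus of $(E_{\mathrm{arr}})_T$, reflecting Proposition~\ref{prop:zl}, and using the combinatorial identity $\C^{\tilde s_i} = \bigoplus_{a \in Q_1,\, t(a) = i,\, s(a) \neq 0} \C^{\tilde s_{s(a)}} \oplus \C^{n_{0i}}$ from the proof of that proposition one checks, factor by factor, that
\[
  I_{T_Y}(\tilde d)\, I_{E_{\mathrm{arr}}}(\tilde d) = I_{T_{M_Q}}(\tilde d)
\]
as elements of $H^\bullet(M_Q,\C)$: the source-arrow factors of $Y^{\ab}$ together with the $\Hom(Q_{s(a)}, Q_{t(a)})$-factors of $E_{\mathrm{arr}}$ recombine exactly into the arrow divisors of $Q^{\ab}$, while the Weyl-denominator factors (coming from $\prod_i \GL(r_i)$, common to $Y$ and $M_Q$) are untouched. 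Tensoring in the $E'$-twist, which abelianizes over $M_{Q^{\ab}}$ to the $E_G$-twist, gives $I_{T_Y}(\tilde d)\, I_{E_{\mathrm{arr}}}(\tilde d)\, I_{E'}(\tilde d) = I_{T_X}(\tilde d)$; the signs $(-1)^{\epsilon}$ and the relation $\tilde d \to d$ match on the two sides because $\epsilon$ depends only on the $r_i$ and because the curve-class projection $\NE_1(Y^{\ab}) \to \NE_1(M_Q)$ restricts, on curves lying in $M_{Q^{\ab}}$, to $\NE_1(M_{Q^{\ab}}) \to \NE_1(M_Q)$. Summing over curve classes yields $j_Y^* I_{Z,Y} = j^* I_{X,M_Q}$ and, comparing $z^{-1}$-coefficients, $c_Y = j_Y^* C_Y$ restricts to $c = j^* C$; combined with the previous step this completes the proof.

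The main obstacle I anticipate is precisely this last step: making the change-of-ambient-space identity rigorous at the level of the full (not merely leading-order) I-functions rather than formally. One must control the Novikov directions of $Y^{\ab}$ that do not come from $M_{Q^{\ab}}$ -- the standard subtlety in quantum Lefschetz when the ambient is not a projective space -- which is exactly where the convexity and global generation of $E_{\mathrm{arr}}$ (Remark~\ref{gg}, Proposition~\ref{pro:nef}) are needed, via the functoriality theorem for virtual classes~\cite{KimKreschPantev2003,Coates2014} applied simultaneously on the Abelian (toric) and non-Abelian sides. A secondary, more bookkeeping issue is checking that the deformation-invariance step respects the identification of the ambient part of the cohomology used throughout.
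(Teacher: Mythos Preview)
Your proposal is essentially the same approach as the paper's proof: embed $M_Q$ into the product of Grassmannians $Y$ via Proposition~\ref{prop:zl}, invoke Ciocan-Fontanine--Kim--Sabbah for $Y$, and match the I-functions through the identity $I_{T_Y}(\tilde d)\,I_{E_{\mathrm{arr}}}(\tilde d)=I_{T_{M_Q}}(\tilde d)$ (the paper records this one line above as $I_{T_{M_Q}}(\tilde d)\,I_V(\tilde d)=I_{T_Y}(\tilde d)\,I_{V'}(\tilde d)$, with $V=E_{\mathrm{arr}}$). You have correctly flagged the one genuine technical point: CFKS's Theorem~6.1.2 is stated for representation-theoretic bundles, whereas $E_{\mathrm{arr}}$ is only K-theoretically so, and this needs an extension of their argument via the Coates--Givental twisting operator $\triangle$, using $\triangle(V)\triangle(A_G)=\triangle(B_G)$ for an exact sequence $0\to A_G\to B_G\to V\to 0$.

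The only real difference is packaging. The paper passes through Givental's Lagrangian cone formalism and an equivariant I-function, proving the stronger statement (Theorem~\ref{thm:AnA_enhanced}) that $(-z)j^*I_{X,M_Q}(-z)\in\mathcal{L}_X$ with no Fano hypothesis, and then deduces the $J$-function formula from the uniqueness characterization of $J_X$ on $\mathcal{L}_X$ together with the String Equation. Your route via deformation invariance of $J_Z(0)=J_X(0)$ and direct comparison of $J$-functions is equally valid for the Fano case at hand; the cone formulation buys a cleaner handling of the non-equivariant limit $\lambda\to 0$ (via \cite{Coates2014}) and avoids having to separately check that the deformation respects the ambient-cohomology identifications, but your argument would go through once the K-theoretic extension of CFKS is made precise.
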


\begin{rem}
  Via the Divisor Equation and the String Equation~\cite[\S1.2]{Pandharipande98}, Theorem~\ref{thm:AnA} determines $J_X(\tau)$ for $\tau \in H^0(X) \oplus H^2(X)$.
\end{rem}
\subsection{Proof of Theorem~\ref{thm:AnA}}

Givental has defined~\cite{Givental04,CoatesGivental07} a Lagrangian cone $\mathcal{L}_X$ in the symplectic vector space $H_X := H^*(X,\C) \otimes N(X) \otimes \C(\!(z^{-1})\!)$ that encodes all genus-zero Gromov--Witten invariants of $X$.  Note that $J_X(T) \in H_X$ for all $T$.  The J-function has the property that $(-z) J_X(T,-z)$ is the unique element of $\mathcal{L}_X$ of the form
\[
{-z} + T + O(z^{-1})
\]
and this, together with the expression \eqref{eq:I_asymptotics} for the I-function and the String Equation
\[
J_X(T+c,z) = e^{c/z} J_X(T,z)
\]
shows that Theorem~\ref{thm:AnA} follows immediately from Theorem~\ref{thm:AnA_enhanced} below.  Theorem~\ref{thm:AnA_enhanced} is stronger: it does not require the hypothesis that the quiver flag zero locus $X$ be Fano.

\begin{thm} \label{thm:AnA_enhanced}
  Let $X$ be a quiver flag zero locus given by $(Q,E_G)$, and let $j \colon X \to M_Q$ be the embedding of $X$ into the ambient quiver flag variety.  Then $(-z) j^* I_{X,M_Q}(-z) \in \mathcal{L}_X$.
\end{thm}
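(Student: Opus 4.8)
The plan is to deduce the statement from the known Abelian/non-Abelian Correspondence with bundles for products of Grassmannians, using the embedding of Proposition~\ref{prop:zl} together with Quantum Lefschetz. Proposition~\ref{prop:zl} realizes $M_Q$ as the zero locus inside $Y := \prod_{i=1}^{\rho}\Gr(\tilde s_i,r_i)$ of a regular section of $A := \bigoplus_{a \in Q_1,\, s(a)\neq 0} S^*_{s(a)} \otimes Q_{t(a)}$; since each $Q_i$ and each $S^*_i$ is a quotient of a trivial bundle, $A$ is globally generated and hence convex. As $E_G$ is the restriction to $M_Q$ of the bundle $E'$ on $Y$ attached to the same representation of $G = \prod_i \GL(r_i)$, the composite $X \hookrightarrow M_Q \hookrightarrow Y$ exhibits $X$ as the zero locus of a regular section of the convex bundle $F := E' \oplus A$ on $Y$. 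Although $A$ is not itself a representation bundle of $G$, the exact sequences $0 \to Q^*_{s(a)} \to \mathcal{O}^{\tilde s_{s(a)}} \to S^*_{s(a)} \to 0$ show that it is a $K$-theoretic difference of representation bundles, so the $F$-twisted theory of $Y$ is computed by the $K$-theoretic rule~\eqref{eq:K_theory}.

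I would then produce a point on the $F$-twisted Lagrangian cone $\mathcal{L}^F_Y$ of $Y$. Presenting $Y$ as the quiver flag variety for the quiver with $\tilde s_i$ arrows $0 \to i$, its Abelianization is the toric quiver flag variety $\prod_i(\PP^{\tilde s_i-1})^{r_i}$, on which Givental's toric mirror theorem~\cite{Givental98} — upgraded to the $F$-twisted theory via Quantum Lefschetz for the convex summand $E'$ (whose $E_T$ summands are nef, hence convex, by Proposition~\ref{pro:nef}) and via~\eqref{eq:K_theory} for $A$ — yields a hypergeometric modification $I_{F,Y}$ with $(-z)\,I_{F,Y}(-z) \in \mathcal{L}^F_Y$. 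Since $Y$ is a product of flag manifolds, the Abelian/non-Abelian Correspondence with bundles for $Y$ is known~\cite{CiocanFontanineKimSabbah2008} and identifies $I_{F,Y}$ with the appropriate hypergeometric built from the Abelianization of $Y$; Quantum Lefschetz in cone form — the cone-theoretic version of~\cite[Theorem~1.1]{Coates2014}, which rests on functoriality of virtual classes~\cite{KimKreschPantev2003} — then gives $(-z)\,j'^*I_{F,Y}(-z)\big|_{\lambda=0} \in \mathcal{L}_X$, where $j'\colon X \to Y$ is the embedding and $\lambda$ the equivariant parameter.

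It remains to prove that $j'^*I_{F,Y}\big|_{\lambda=0}$ equals $j^*I_{X,M_Q}$ as a cohomology-valued series, and this is the step I expect to be the main obstacle. On one side, $I_{F,Y}$ is a sum over the curve classes of the Abelianization of $Y$ of products of hypergeometric factors for $T_Y$, for $E'$ and for $A$, together with the Weyl-anti-invariant factor and the Novikov sign rule of~\cite{CiocanFontanineKimSabbah2008}; on the other, $I_{X,M_Q} = \sum_{d \in \NE_1(M_Q)}\sum_{\tilde d \to d}(-1)^{\epsilon(d)}q^d\,I_{T_X}(\tilde d)$, with $I_{T_X}(\tilde d) = I_{T_{M_Q}}(\tilde d)\,I_{E_G}(\tilde d)$ and $I_{T_{M_Q}}$ given by~\eqref{TX}, a sum over the curve classes of $M_{Q^{\ab}}$. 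Reconciling these requires: (i) matching the curve classes and the relations $\tilde\beta \to \beta$, using the compatibility of the Abelianizations of $M_Q$ and $Y$ furnished by Proposition~\ref{prop:zl}; (ii) showing that the $A$-factor of $I_{F,Y}$ combines with the $T_Y$-factor to reproduce $I_{T_{M_Q}}$ after restriction — this is the adjunction $T_{M_Q} = T_Y|_{M_Q} \ominus A$ transported through~\eqref{eq:K_theory}, with the Euler sequence of Proposition~\ref{euler} verifying that both sides agree with~\eqref{TX}; and (iii) checking that the signs $(-1)^{\epsilon(d)}$ and the Weyl-anti-invariant factors coincide, a bookkeeping computation with products over arrows and over roots. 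The only further point requiring care is that $A$ and $T_{M_Q}$ (and, for certain Schur data, $E_G$) are merely $K$-theoretic combinations of representation bundles rather than honest ones; this is harmless because $A$ is a genuine convex bundle and the twist is multiplicative in exact sequences, so~\eqref{eq:K_theory} remains valid throughout.
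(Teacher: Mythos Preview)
Your proposal is correct and follows essentially the same route as the paper: embed $M_Q$ in the product of Grassmannians $Y$ via Proposition~\ref{prop:zl}, apply the Ciocan-Fontanine--Kim--Sabbah result for flag manifolds with bundles, and descend to $\mathcal{L}_X$ via Quantum Lefschetz. Your emphasis is slightly misplaced, though: the I-function matching you flag as the ``main obstacle'' is in fact a one-line cancellation, since $I_{T_{M_Q}}(\tilde d)\,I_{A}(\tilde d) = I_{T_Y}(\tilde d)\,I_{F}(\tilde d)$ directly from the $K$-theoretic adjunction and both Abelianizations share the same torus $T$ and curve lattice; the step that actually needs care---and that you dismiss as ``harmless''---is extending the CKS argument to the bundle $A$, which is only $K$-theoretically representation-theoretic, and the paper handles this by observing that the Quantum Lefschetz twisting operator $\triangle$ satisfies $\triangle(V)\triangle(A_G) = \triangle(B_G)$ for any exact sequence $0 \to A_G \to B_G \to V \to 0$.
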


\begin{proof}

Let $Y=\prod_{i=1}^\rho \Gr(\tilde{s_i},r_i)$, and denote by $Y^{ab}=\prod_{i=1}^\rho (\mathbb{P}^{\tilde{s_i}-1})^{r_i}$ its Abelianization.  In \S\ref{zero loci} we constructed a vector bundle $V$ on $Y$ such that $M_Q$ is cut out of $Y$ by a regular section of $V$:
\[
V=\bigoplus_{i=2}^\rho Q_i \otimes \C^{\tilde{s}_i *}/F_i^*
\]
where $\tilde{s}_i$ is the number of paths from $0$ to $i$ and $F_i =\bigoplus_{t(a)=i} Q_{s(a)}$.  $V$ is globally generated and hence convex. It is not representation theoretic, but it is K-theoretically: the sequence
\[
0 \to F_i^* \otimes Q_i \to (\C^{\tilde{s_i}})^* \otimes Q_i \to (\C^{\tilde{s_i}})^* \otimes Q_i/F_i^* \to 0
\]
is exact.  Let $i \colon M_Q \to Y$ denote the inclusion.

Both $Y$ and $M_Q$ are GIT quotients by the same group; we can therefore canonically identify a representation theoretic vector bundle $E'_G$ on $Y$ such that $E'_G|_{M_Q}$ is $E_G$.  Our quiver flag zero locus $X$ is cut out of $Y$ by a regular section of $V' = V \oplus E'_G$. Note that
$$I_{T_{M_Q}}(\tilde{d}) I_{V}(\tilde{d}) = I_{T_Y}(\tilde{d}) I_{V'}(\tilde{d}).$$ The I-function $I_{X,M_Q}$ defined by considering $X$ as a quiver flag zero locus in $M_Q$ with the bundle $E_G$ then coincides with the pullback $i^* I_{X,Y}$ of the I-function defined by considering $X$ as a quiver flag zero locus in $Y$ with the bundle $V'$. 
 It therefore suffices to prove that
\[
(-z) (i \circ j)^* I_{X,Y}(-z) \in \mathcal{L}_X.
\]

We consider a $\C^*$-equivariant counterpart of the I-function, defined as follows.  For a representation theoretic bundle $W_G$ on $Y$, let $D_1,\dots, D_r$ be the divisors on $Y^{ab}$ giving the split bundle $W_T$, and for $\tilde{d} \in \NE_1(Y^{\ab})$ set 
\[
I^{\C^*}_{W_G}(\tilde{d})=\frac{\prod_{i=1}^r \prod_{m \leq \langle \tilde{d}, D_i \rangle} (\lambda + D_i+m z)}{\prod_{i=1}^r \prod_{m \leq 0} (\lambda + D_i+m z)}.
\]
We extend this definition to bundles on $Y$ -- such as $V'$ -- that are only K-theoretically representation theoretic in the same way as \eqref{eq:K_theory}.  Recalling that
\[
I_{T_Y}(\tilde{d})=\frac{\prod_{i=1}^\rho \prod_{j \neq k} \prod_{m \leq \langle \tilde{d}, D_{ij}-D_{ik} \rangle} (D_{ij}-D_{ik}+m z)}{\prod_{i=1}^\rho \prod_{j \neq k} \prod_{m \leq 0} (D_{ij}-D_{ik}+m z)}
\frac{\prod_{i=1}^\rho \prod_{j=1}^{r_i} \prod_{m \leq 0} (D_{ij}+m z)^{\tilde{s}_i}}{\prod_{i=1}^\rho \prod_{j=1}^{r_i}\prod_{m \leq \langle \tilde{d}, D_{ij} \rangle} (D_{ij}+m z)^{\tilde{s}_i}},
\]
we define
\[
I^{\C^*}_{X,Y} (z) = \sum_{d \in \NE_1(Y)} \sum_{\tilde{d} \to d} (-1)^{\epsilon(d)} q^d I_{T_Y}(\tilde{d}) \, I^{\C^*}_{V'}(\tilde{d}).
\]

The I-function $I_{X,Y}$ can be obtained by setting $\lambda=0$ in $I^{\C^*}_{X,Y}$.  In view of \cite[Theorem~1.1]{Coates2014}, it therefore suffices to prove that 
\[
(-z) I^{\C^*}_{X,Y} (-z) \in \mathcal{L}_{\be, V'}
\]
where $\mathcal{L}_{\be, V'}$ is the Givental cone for the Gromov--Witten theory of $Y$ twisted by the total Chern class $\be$ and the bundle $V'$.

If $V'$ were a representation theoretic bundle, this would follow immediately from the work of Ciocan-Fontanine--Kim--Sabbah: see the proof of Theorem~6.1.2 in \cite{CiocanFontanineKimSabbah2008}.  In fact $V'$ is only K-theoretically representation theoretic, but their argument can be adjusted almost without change to this situation. Suppose that $A_G$ and $B_G$ are homogenous vector bundles,
and that
\[
0 \to A_G \to B_G \to V \to 0
\]
is exact.  Then we can also consider an exact sequence
\[
0 \to A_T \to B_T \to F \to 0
\]
on the Abelianization, and define $V_T:=F.$
Using the notation of the proof of \cite[Theorem~6.1.2]{CiocanFontanineKimSabbah2008}, the point is that 
\[
\triangle(V) \triangle (A_G)=\triangle(B_G)
\]
Here, $\triangle(V)$ is the twisting operator that appears in the Quantum Lefschetz theorem~\cite{CoatesGivental07}. We can then follow the same argument for 
\[
\triangle(B_G)/\triangle(A_G)
\]
After Abelianizing, we obtain $\triangle(B_T)/\triangle(A_T)=\triangle(F)$, and conclude that
\[
(-z) I^{\C^*}_{X,Y} (-z) \in \mathcal{L}_{\be, V'}
\]
as claimed.  This completes the proof.
\end{proof} 

\newpage

\appendix

\section{Computations} \label{sec:search}
\begin{center}
\small{\sc{T. Coates, E. Kalashnikov, A. Kasprzyk}}
\end{center}
In this appendix, we describe the computer search for four dimensional Fano quiver flag zero loci with codimension at most four. Code to perform this and similar analyses, using the computational algebra system Magma~\cite{Magma}, is available at the repository~\cite{codebase}.  A database of Fano quiver flag varieties, which was produced as part of the calculation, is available at the repository~\cite{database}.

\subsection{Classifying Quiver Flag Varieties} \label{sec:classify_ambients}

The first step is to find all Fano quiver flag varieties of dimension at most 8.  A non-negative integer matrix $A=[a_{i,j}]_{0\leq i,j \leq \rho}$ and a dimension vector $\br \in \Z^{\rho+1}_{> 0}$ determine a vertex-labelled directed multi-graph: the $\rho+1$ vertices are labelled by the $r_i$, and the adjacency matrix for the graph is $A$. Such a graph, if it is acyclic with a unique source, and the label of the source is $1$, also determines a quiver flag variety. Two $(A,r)$ pairs can determine the same  graph and hence the same quiver flag varieties. 

\begin{mydef} A pair $(A,r)$ determining a quiver flag variety is in \emph{normal form} if $r$ is increasing and, under all permutations of the $\rho+1$ indices that preserve $r$, the columns of $A$ are lex minimal. 
\end{mydef}

\noindent Two pairs in normal form determine the same quiver flag variety (and hence the same graph) if and only if they are equal. 

Recall that quiver flag varieties are towers of Grassmannians (see \S\ref{sec: tower of Grassmannian bundles}), and that the $i$th step in the tower is given by the relative Grassmannian $\Gr(\mathcal{F}_i, r_i)$, where $\mathcal{F}_i$ is a vector bundle of rank~$s_i.$ Using this construction it is easy to see that if $s_i=r_i$ then this quiver flag variety is equivalent to the quiver flag variety $\tilde{Q}$ with vertex $i$ removed, and one arrow $k \to j$ for every path of the form $k \to i \to j$.  Therefore we can assume that $s_i>r_i$, and hence that every vertex contributes strictly positively to the dimension of the quiver flag variety.  With this constraint, there are only finitely many quiver flag varieties with dimension at most 8, and each such has at most 9 vertices.

The algorithm to build all quiver flag varieties with dimension at most 8 is as follows. Start with the set $S$ of all Grassmannians of dimension at most 8. Given an element of $S$ of dimension less than 8, add one extra labelled vertex and extra arrows into this vertex, in all possible ways such that the dimension of the resulting quiver flag variety is at most $8$. Put these in normal form and include them in $S$.  Repeat until there are no remaining elements of $S$ of dimension less than~$8$.  

In this way we obtain all quiver flag varieties of dimension at most 8. We then compute the ample cone and anti-canonical bundle for each, and discard any which are not Fano. We find 223044 Fano quiver flag varieties of dimension at most $8$; 223017 of dimension $4 \leq d \leq 8$.  Of these 50617 (respectively 50612) are non-toric quiver flag varieties.  

\begin{table}[h]
\label{tab:qfv}
\begin{tabular}{lcccccccc}
  \toprule
  & \multicolumn{8}{c}{$r$}  \\ \cmidrule(l){2-9}
  \multicolumn{1}{c}{$d$} & 1 & 2 & 3 & 4 & 5 & 6 & 7 & 8 \\
  \midrule
1 & 1 &   &   &   &   &   &   &   \\
2 & 2 & 3 &   &   &   &   &   &   \\
3 & 2 & 8 & 11 &   &   &   &   &   \\
4 & 3 & 17 & 44 & 48 &   &   &   &   \\
5 & 2 & 27 & 118 & 262 & 231 &   &   &   \\
6 & 4 & 41 & 264 & 903 & 1647 & 1202 &   &   \\
7 & 2 & 54 & 498 & 2484 & 7005 & 10618 & 6541 &   \\
8 & 4 & 74 & 872 & 5852 & 23268 & 54478 & 69574 & 36880 \\ \bottomrule \\
\end{tabular}
\caption{The number of Fano quiver flag varieties by dimension $d$ and Picard rank $r$}
\end{table}

\begin{rem} 
  In our codebase we define new Magma intrinsics \intrinsic{QuiverFlagVariety(A,r)}, which creates a quiver flag variety from an adjacency matrix $A$ and dimension vector $r$, and
  \begin{center}
    \intrinsic{QuiverFlagVarietyFano(id)},
  \end{center}
  which creates a Fano quiver flag variety, in normal form, from its ID~\cite{codebase, database}.  We assign IDs to Fano quiver flag varieties of dimension at most 8, in the range $\{1\ldots 223044\}$, by placing them in normal form and then ordering them first by dimension, then by Picard rank, then lexicographically by dimension vector, then lexicographically by the columns of the adjacency matrix.  We also define Magma intrinsics \intrinsic{NefCone(Q)}, \intrinsic{MoriCone(Q)}, \intrinsic{PicardLattice(Q)}, and \intrinsic{CanonicalClass(Q)} that compute the nef cone, Mori cone, Picard lattice and canonical class of a quiver flag variety $Q$, and an intrinsic \intrinsic{PeriodSequence(Q,l)} that computes the first $l+1$ terms of the Taylor expansion of the regularised quantum period of $Q$.  See \S\ref{sec:computations} for more details.
\end{rem}

\subsection{The Class of Vector Bundles that We Consider} \label{sec:which_bundles}

We consider all bundles $E$ on a given quiver flag variety that:
\begin{itemize}
\item are direct sums of bundles of the form 
  \begin{equation}
    \label{eq:summand}
    L \otimes S^{\alpha_1}(W_1) \otimes \cdots \otimes S^{\alpha_\rho}(W_\rho)
  \end{equation}
  where each $S^{\alpha_i}$ is a positive Schur power and $L$ is a nef line bundle; and
\item have rank $c$, where $c$ is four less than the dimension of the ambient quiver flag variety.
\end{itemize}
Remark~\ref{gg} shows that positive Schur powers $S^\alpha(W_i)$ are globally generated, and Proposition~\ref{pro:nef} shows that nef line bundles are globally generated.  Since the tensor product of globally generated vector bundles is globally generated, the first condition ensures that $E$ is globally generated.  In particular, therefore, the zero locus $X$ of a generic section of $E$ is smooth.  The second condition ensures that the zero locus $X$, if non-empty, is a fourfold.  Global generation also implies that the bundle $E$ is convex, which allows us to compute the quantum period of $X$ as described in \S\ref{sec:review}.

Consider a summand as in \eqref{eq:summand}.  We can represent the partition $\alpha_i$ as a length $r_i$ decreasing sequence of non-negative integers, and write $L=\bigotimes_{j=1}^\rho (\det W_j)^{a_j}$ where $a_j$ may be negative. Therefore each such summand is determined a length $\rho$ sequence of \emph{generalised partitions}:  the partition (with possibly negative entries) corresponding to index $i$ is $\alpha_i + (a_i,\dots,a_i).$

\begin{rem}
In our codebase we define a new Magma intrinsic 
\begin{center}
  \intrinsic{QuiverFlagBundle(Q,[A1,\ldots,Ak])}
\end{center}
which creates a bundle of the above form, on the quiver flag variety $Q$, from a sequence of generalised partitions $(A1,\ldots,Ak)$.  We also define an intrinsic \intrinsic{FirstChernClass(E)} that computes the first Chern class of such a bundle $E$; intrinsics \intrinsic{Degree(E)} and \intrinsic{EulerNumber(E)} that compute the degree and Euler number\footnote{This is the Euler characteristic of $X$ as a topological space.} of the zero locus $X$ of a generic section of $E$; and intrinsics \intrinsic{HilbertCoefficients(E,l)} and \intrinsic{PeriodSequence(E,l)} that compute the first $l+1$ terms of, respectively, the Hilbert series of $X$ and the Taylor expansion of the regularised quantum period of~$X$.  See \S\ref{sec:computations}.
\end{rem}

\subsection{Classifying Quiver Flag Bundles} \label{sec:classify_bundles}
In this step, we describe the algorithm for determining all bundles on a given quiver flag variety that determine a smooth four-dimensional Fano quiver flag zero locus.
A vector bundle as above is determined by a tuple $(A, r, P)$, where $A$ is an adjacency matrix, $r$ is a dimension vector $r$, and $P=(P_1,\dots,P_k)$ is a sequence where each $P_i$ is a length-$\rho$ sequence of generalised partitions such that the $j$th partition in each $P_i$ is of length $r_j$.  Note that we regard the summands \eqref{eq:summand} in our vector bundles as unordered; also, as discussed above, different pairs $(A,r)$ can determine the same quiver flag variety.  We therefore say that a tuple $(A,r,P)$ is in \emph{normal form} if the pair $(A,r)$ is in normal form, $P$ is in lex order, and under all permutations of the vertices preserving these conditions, the sequence $P$ is lex minimal; we work throughout with tuples in normal form.

Given a Fano quiver flag variety $M(Q,\br)$ of dimension $4+c$, $c \leq 4$, with anti-canonical class $-K_Q$ and nef cone $\Nef(Q)$, we search for all bundles $E$ such that 
\begin{itemize}
\item $E$ is a direct sum of bundles of the form \eqref{eq:summand};
\item $\rk(E)=c$;
\item $-K_Q-c_1(E) \in \Amp(Q)$.
\end{itemize}
The last condition ensures that the associated quiver flag zero locus $X$, if non-empty, is Fano.  We proceed as follows.  We first find all possible summands that can occur; that is, all irreducible vector bundles $E$ of the form \eqref{eq:summand} such that $\rk(E) \leq c$ and  $-K_Q-c_1(E) \in \Amp(Q)$.
Let $\Irr(Q)$ be the set of all such bundles. Write $\Irr(Q)=\Irr(Q)_1 \sqcup \Irr(Q)_2$, where $\Irr(Q)_1$ contains vector bundles of rank strictly larger than $1$, and $\Irr(Q)_2$ contains only line bundles. We then search for two vector bundles $E_1$,~$E_2$ such that $E_i$ is a direct sum of bundles from $\Irr(Q)_i$ and that $E=E_1\oplus E_2$ satisfies the conditions above. 

For each $x \in \Nef(Q)$ such that $-K_Q-x$ is ample, we find all possible ways to write $x$ as
\begin{equation}
  \label{eq:decomposition}
  x=\sum_{i=1}^l a_i
\end{equation}
where the $a_i$ are (possibly repeated) elements of a Hilbert basis for $\Nef(Q)$. There are only finitely many decompositions \eqref{eq:decomposition}; finding them efficiently is a knapsack-type problem that has already been solved~\cite{CoatesKasprzykPrince2015}. For each $\tilde{c} \leq c$ and each partition of the $a_i$ into at most $c/2$ groups $S_1,\dots, S_{s}$, we find all possible choices of $F_1,\dots,F_{s} \in \Irr_1$ such that 
\begin{align*}
  c_1(F_i)=\sum_{j \in S_i} a_j && \rk(F_1)+\cdots+\rk(F_s)=\tilde{c}.
\end{align*}
Set $E_1=F_1 \oplus \cdots \oplus F_s$. Then for each $y \in \Nef(Q)$ such that $-K_Q-x-y$ is ample, we again find all ways of writing 
\[
y=\sum_{j=1}^m b_j
\]
as a sum of Hilbert basis elements. Each partition of the $b_j$ into $c-\tilde{c}$ groups gives a choice of nef line bundles $L_1,\dots,L_{c-\tilde{c}} \in \Irr_2(Q)$, and we set $E_2=\oplus L_j$. 

\begin{rem}
  Treating the higher rank summands $\Irr_2$ and line bundles $\Irr_1$ separately here is not logically necessary, but it makes a huge practical difference to the speed of the search.
\end{rem}

\subsection{Classifying Quiver Flag Zero Loci}

For each of the Fano quiver flag varieties $Q$ of dimension between $4$ and $8$, found in \S\ref{sec:classify_ambients}, we use the algorithm described in \S\ref{sec:classify_bundles} to find all bundles on $Q$ of the form described in \S\ref{sec:which_bundles}.
This produces 10788446 bundles.  Each such bundle $E$ determines a quiver flag zero locus $X$ that is either empty or a smooth Fano fourfold.  We discard any varieties that are empty or disconnected, and for the remainder compute the first fifteen terms of the Taylor expansion of the regularised quantum period of $X$, using Theorem~\ref{thm:AnA}.  For many of the quiver flag zero loci that we find this computation is extremely expensive, so in practice it is essential to use the equivalences described in \S\ref{sec:equivalences} to replace such  quiver flag zero loci by equivalent and more tractable models.  The number of equivalence classes is far smaller than the number of quiver flag zero loci that we found, and so this replaces roughly 10 million calculations, many of which are hard, by around half a million calculations, almost all of which are easy. In this way we find 749 period sequences.  We record these period sequences, together with the construction, Euler number, and degree for a representative quiver flag zero locus, in Appendix~\ref{results} below. \numberofnewFanos~of the period sequences that we find are new. Thus we find at least 141 new four-dimensional Fano manifolds\footnote{To be precise: we find at least 141 four-dimensional Fano manifolds for which the regularised quantum period was not previously known.  The regularised quantum period of a Fano manifold $X$ is expected to completely determine $X$.  See~\cite{CoatesKasprzykPrince2015,CoatesGalkinKasprzykStrangeway2018} for known quantum periods.}.

\begin{rem}
  A computationally cheap sufficient condition for a quiver flag zero locus to be empty arises as follows.   If $W$ is the tautological quotient bundle on $Gr(n,r)$, where $2r-1>n$, then a generic global section of $\wedge^2 W$ or $\Sym^2 W$ has an empty zero locus. Thus if $i$ is a vertex in a quiver $Q$ such that all arrows into $i$ are from the source, and $2r_i-1>n_{0i}=s_i$, then there are no global sections of $\wedge^2 W_i$ or $\Sym^2 W_i$ with non-empty zero locus: to see this, apply Proposition~\ref{prop:zl} to $Q$.
\end{rem}

\subsection{Cohomological Computations for Quiver Flag Zero Loci} \label{sec:computations}

In this section we describe how we compute the degree, Euler characteristic, Hilbert series, and Taylor expansion of the regularised quantum period for quiver flag varieties and quiver flag zero loci. This relies on Martin's integration formula~\cite{Martin2000} and Theorem \ref{thm:AnA}. 

Let $V$ be a smooth projective variety with an action of $G$ on $V$, let $T$ be a maximal torus in $G$, and consider the GIT quotients $V/\!\!/G$ and $V/\!\!/T$ determined by a character of $G$. Let $\pi:V^{ss}(G)/T \to V/\!\!/G$ be the projection and $i:V^{ss}(G)/T \to V^{ss}(T)/T=V/\!\!/T$ be the inclusion. Let $W$ be the Weyl group, and $e=\prod_{\lambda \in {Roots}(G)} c_1(L_\lambda),$ where $L_\lambda$ is the line bundle on $V/\!\!/T$ associated to the character $\lambda$.
\begin{thm}[Martin's Integration Formula, \cite{Martin2000}] For any $a \in H^*(V/\!\!/G, \C)$ and any $\tilde{a} \in H^*(V/\!\!/T, \C)$ satisfying $\pi^*(a)=i^*(\tilde{a})$
\[
\int_{V/\!\!/G} a = \frac{1}{|W|}\int_{V/\!\!/T} \tilde{a} \cup e.
\]
\end{thm}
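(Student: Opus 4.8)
This is Martin's integration formula \cite{Martin2000}, and the plan is to reproduce his argument. Fix a K\"ahler structure on $V$ compatible with the $G$-action, with moment map $\mu\colon V\to\mathfrak g^{*}$, so that $V/\!\!/G$ is identified with $\mu^{-1}(0)/G$ and $V/\!\!/T$ with $\mu_T^{-1}(0)/T$, where $\mu_T$ is $\mu$ composed with the projection $\mathfrak g^{*}\twoheadrightarrow\mathfrak t^{*}$; since $V/\!\!/G$ and $V/\!\!/T$ are assumed smooth we may take $0$ to be a regular value of both $\mu$ and $\mu_T$, with free actions on the zero loci. The object tying the two quotients together is the hybrid quotient $Z:=\mu^{-1}(0)/T$, a \emph{compact} submanifold of $V/\!\!/T$ which is a deformation retract of $V^{ss}(G)/T$ sitting inside it; it carries the two maps of the statement, namely the fibration $\pi\colon Z\to V/\!\!/G$ whose fibre is the flag variety $G/T$, and the inclusion $i\colon Z\hookrightarrow V/\!\!/T$. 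I would split the proof into a fibre integration along $\pi$ and a Poincar\'e-duality computation for $i$.

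For the first step: the relative tangent bundle $T_\pi$ is associated to the principal $G$-bundle $\mu^{-1}(0)\to V/\!\!/G$ via the flag-variety tangent representation, so its Chern roots are $i^{*}c_1(L_\lambda)$ for $\lambda$ ranging over one of the two opposite halves of $\mathrm{Roots}(G)$. Since $\int_{G/T}c_{\mathrm{top}}(T(G/T))=\chi(G/T)=|W|$, integrating along the fibre of $\pi$ gives, for any top-degree class $a$ on $V/\!\!/G$,
\[
\int_{V/\!\!/G}a\;=\;\frac{1}{|W|}\int_{Z}\pi^{*}a\cup c_{\mathrm{top}}(T_\pi).
\]
For the second step: over $V/\!\!/T=\mu_T^{-1}(0)/T$ one has the vector bundle $\mathcal N:=\mu_T^{-1}(0)\times_T(\mathfrak g/\mathfrak t)$, and the component of $\mu$ in the directions complementary to $\mathfrak t^{*}$ descends to a section of $\mathcal N$ whose zero locus is exactly $Z$; this section is transverse along $Z$ because $0$ is a regular value of $\mu$, so $[Z]$ is Poincar\'e dual to $c_{\mathrm{top}}(\mathcal N)$, whose Chern roots are the $c_1(L_\lambda)$ for $\lambda$ in the complementary half of $\mathrm{Roots}(G)$. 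Hence $\int_Z i^{*}\gamma=\int_{V/\!\!/T}\gamma\cup c_{\mathrm{top}}(\mathcal N)$ for every class $\gamma$ on $V/\!\!/T$.

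Now combine the two steps. By the hypothesis $\pi^{*}a=i^{*}\tilde a$ — which is exactly the compatibility built into the homomorphism $\phi$ of Theorem~\ref{martin} — and because the Chern roots of $T_\pi$ are pulled back from $V/\!\!/T$, the integrand $\pi^{*}a\cup c_{\mathrm{top}}(T_\pi)$ of the first display equals $i^{*}\gamma$ for the class $\gamma:=\tilde a\cup\prod_{\lambda}c_1(L_\lambda)$ on $V/\!\!/T$, the product being over the half of $\mathrm{Roots}(G)$ whose $c_1$'s are the Chern roots of $T_\pi$. Feeding this into the second step makes the two complementary half-products assemble into $c_{\mathrm{top}}(T_\pi)\cup c_{\mathrm{top}}(\mathcal N)=\prod_{\lambda\in\mathrm{Roots}(G)}c_1(L_\lambda)=e$, and we obtain $\int_{V/\!\!/G}a=\tfrac1{|W|}\int_{V/\!\!/T}\tilde a\cup e$.

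The main obstacle is the second step. One must verify that the tautological section of $\mathcal N$ vanishes transversally along $Z$, so that $[Z]$ genuinely represents $c_{\mathrm{top}}(\mathcal N)$ with no excess-intersection correction — this is precisely where the regular-value and free-action hypotheses are used — and one must pin down the complex structures and orientations on $T(G/T)$ and on $\mathfrak g/\mathfrak t$ carefully enough that the two half-root products combine to $e$ with coefficient exactly $1$. The individual sign choices (replacing a root $\lambda$ by $-\lambda$) are not canonical, but the final answer is, precisely because $\mathrm{Roots}(G)$ is stable under $\lambda\mapsto-\lambda$, which is why $e$ is naturally written as a product over \emph{all} roots. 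An alternative route, which dispenses with the choice of K\"ahler structure, runs the same bookkeeping inside Kirwan's $G$-equivariant cohomology of $V$; there the work shifts to showing that the contributions of the Hesselink--Kirwan unstable strata of $V^{ss}(T)$ to $\int_{V/\!\!/T}\tilde a\cup e$ vanish. Both versions are carried out in \cite{Martin2000}.
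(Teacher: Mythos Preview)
The paper does not give a proof of this theorem; it is stated as a citation of Martin's result \cite{Martin2000} and used as a black box. Your proposal is a faithful sketch of Martin's own argument, with the two key steps (fibre integration over $G/T$ and the identification of $[Z]$ as the Euler class of the bundle associated to $\mathfrak g/\mathfrak t$) correctly identified, so there is nothing to compare against in the paper itself.
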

\noindent If $a \in H^*(V/\!\!/G, \C)$ and $\tilde{a} \in H^*(V/\!\!/T, \C)$ satisfy $\pi^*(a)=i^*(\tilde{a})$ then we say that $\tilde{a}$ is a lift of $a$.

In our case the Abelianization $V/\!\!/T$ is a smooth toric variety, and the cohomology rings of such varieties, being Stanley--Reisner rings, are easy to work with computationally~\cite{Magma,Sage}.  For example, we can use this to compute the number of components $h^0(X,\mathcal{O}_X)$ of a Fano quiver flag zero locus $X$.  By Kodaira vanishing, $h^0(X,\mathcal{O}_X)=\chi(X)$, and applying the Hirzebruch--Riemann--Roch theorem gives
\begin{equation}
  \label{eq:HRR}
  \chi(\mathcal{O}_X)=\int_X ch(\mathcal{O}_X) \cup Td(T_X)=\int_X Td(T_X).
\end{equation}
We need to find a lift of the Todd class of $T_X$. Writing $T_X$ as a K-theoretic quotient of representation theoretic bundles via the Euler sequence, as in the proof of Theorem \ref{thm:AnA},  gives the lift that we seek; we then use Martin's formula to reduce the integral \eqref{eq:HRR} to an integral in the cohomology ring of the Abelianization.  The same approach allows us to compute the first two terms $\chi(X,{-K_X})$, $\chi(X,{-2K_X})$ of the Hilbert series of $X$ -- which determine the entire Hilbert series, since $X$ is a Fano fourfold -- as well as the degree and Euler characteristic of $X$.  To compute the first few Taylor coefficients of the quantum period of $X$, we combine this approach with the explicit formula in Theorem~\ref{thm:AnA}.

\newpage

\section{Regularized Quantum Periods for Quiver Flag Zero Loci}
\begin{center}
\small{\sc{T. Coates, E. Kalashnikov, A. Kasprzyk}}
\end{center}
\label{results}

\subsection{The Table of Representatives}

As described in Appendix~\ref{sec:search}, we divided the 4-dimensional quiver flag zero loci $X$ that we found into 749 buckets, according to the first 15 terms of the Taylor expansion of the regularised quantum period of $X$.  We refer to these Taylor coefficients as the period sequence.  Table~\ref{tab:descriptions} below gives, for each of the 749 period sequence buckets, a representative quiver flag zero locus $X$ as well as the degree and Euler number of $X$.  (In some cases we do not know that all the quiver flag zero loci in a bucket are isomorphic, but we checked that they all have the same degree, Euler number, and Hilbert series.)  The quiver flag zero locus $X$ is represented by the adjacency matrix and dimension vector of its ambient quiver flag variety $Y = M(Q,\br)$, together with the sequence of generalised partitions that determine a vector bundle $E \to Y$ such that $X$ is the zero locus of a generic section of $E$.  The generalised partitions are written as Young diagrams, with:
\begin{itemize}
\item $\varnothing$ representing the empty Young diagram;
\item a filled Young diagram, such as $\Ylinecolour{white}\Yfillcolour{black}\yng(2)\Yfillcolour{white}\Ylinecolour{black}$, representing the dual to the vector bundle represented by the unfilled Young diagram $\yng(2)$.  
\end{itemize}
Filled Young diagrams that occur always represent line bundles.

The entries in Table~\ref{tab:descriptions} give representatives of each period sequence bucket that are chosen so as to make the computation of geometric data (the period sequence etc.) straightforward\footnote{They are chosen to minimize the quantity $\sum_{i=0}^\rho r_i^2$, which is a rough proxy for the complexity of the Chow ring of the Abelianization.}.  Even though the Table is constructed by considering all four-dimensional Fano manifolds that occur as quiver flag zero loci in codimension up to four, in a four cases there is no tractable representative as a quiver flag zero locus of low codimension.  In these cases the Table contains a representative as a quiver flag zero locus in higher codimension; the reader who prefers models in lower-dimensional ambient spaces should consult Table~\ref{smaller_ambient}.

\begin{table}[h]
  \centering
  \begin{tabular}{cccc}
    \toprule
    Period ID & Adjacency matrix & Dimension vector & Generalized partitions \\ \midrule
    \oddrow \hypertarget{desc:73}{}\hyperlink{ps:73}{$73$}&
$\cimatrix{$\begin{array}{ccccc}
0 & 1 & 1 & 3 & 3 \\
0 & 0 & 1 & 0 & 0 \\
0 & 0 & 0 & 0 & 0 \\
0 & 0 & 0 & 0 & 0 \\
0 & 1 & 0 & 0 & 0 \\\end{array}$}$
&$\cimatrix{$\begin{array}{ccccc}1 & 2 & 2 & 2 & 2 \\\end{array}$}$
& $\left(\emptyset, \yng(1), \yng(1), \emptyset\right)$\\
\evnrow \hypertarget{desc:144}{}\hyperlink{ps:144}{$144$}&
$\cimatrix{$\begin{array}{ccccc}
0 & 3 & 1 & 2 & 3 \\
0 & 0 & 0 & 1 & 0 \\
0 & 0 & 0 & 0 & 0 \\
0 & 0 & 0 & 0 & 0 \\
0 & 0 & 1 & 0 & 0 \\\end{array}$}$
&$\cimatrix{$\begin{array}{ccccc}1 & 1 & 2 & 2 & 2 \\ \end{array}$}$
&$\left( \emptyset, \yng(1), \yng(1), \emptyset\right)$\\
\oddrow \hypertarget{desc:439}{}\hyperlink{ps:439}{$439$}&
$\cimatrix{$\begin{array}{ccc}
0 & 1 & 5 \\
0 & 0 & 0 \\
0 & 1 & 0 \\\end{array}$}$
&$\cimatrix{$\begin{array}{ccc}1 & 4 & 4 \\\end{array}$}$
&$\left(\yng(1,1,1,1), \emptyset\right), \left(\yng(1,1,1,1), \emptyset\right), \left(\yng(1,1,1,1), \emptyset\right), \left(\yng(1,1,1,1), \emptyset\right)$\\
\evnrow \hypertarget{desc:552}{}\hyperlink{ps:552}{$552$}&
$\cimatrix{$\begin{array}{ccc}
0 & 0 & 5\\
0 & 0 & 0\\
0 & 1 & 0\\\end{array}$}$
&$\cimatrix{$\begin{array}{ccc}1 & 2 & 4 \\\end{array}$}$
&$\left(\yng(1), \yng(1,1,1,1)\right), \left(\yng(1,1), \emptyset\right), \left(\yng(1,1),\emptyset\right)$\\ 
\bottomrule \\
  \end{tabular}
  \caption{Representatives for certain Period IDs in codimension at most four}
  \label{smaller_ambient}
\end{table}

\begin{rem}
  The data in Tables~\ref{tab:descriptions} and~\ref{qfv_periods} can also be found, in machine readable form, in the ancillary files that accompany this paper.
\end{rem}

\subsection{The Table of Period Sequences}

Table~\ref{qfv_periods} records the first 8 terms of the period sequence, $\alpha_0,\alpha_1,\ldots,\alpha_7$, for each of the 749 period sequence buckets.  It also records, where they exist, the names of known four-dimensional Fano manifolds which have the same first fifteen terms of the period sequence.  Notation is as follows:
\begin{itemize}
\item $\PP^n$ denotes $n$-dimensional complex projective space;
\item $Q^n$ denotes a quadric hypersurface in $\PP^{n+1}$;
\item $\FI{4}{k}$ is the $k$th four-dimensional Fano manifold of index~$3$, as in \cite[\S5]{CoatesGalkinKasprzykStrangeway2018};
\item $\VV{4}{k}$ is the $k$th four-dimensional Fano manifold of index~$2$ and Picard rank 1, as in \cite[\S6.1]{CoatesGalkinKasprzykStrangeway2018};
\item $\MW{4}{k}$ is the $k$th four-dimensional Fano manifold of index~$2$ and Picard rank at least 2, as in \cite[\S6.2]{CoatesGalkinKasprzykStrangeway2018};
\item $\Obro{4}{k}$ is the $k$th four-dimensional toric Fano manifold, as in \cite[\S7]{CoatesGalkinKasprzykStrangeway2018};
\item $\Str_{k}$ are the Strangeway fourfolds described in \cite[\S8]{CoatesGalkinKasprzykStrangeway2018};
\item $\CKP_{k}$ is the $k$th four-dimensional toric complete intersection, as in~\cite{CoatesKasprzykPrince2015};
\item $\SS_k$ denotes the del~Pezzo surface of degree~$k$;
\item $\VV{3}{k}$ denotes the three-dimensional Fano manifold of Picard rank~$1$, Fano index~$1$, and degree~$k$;
\item $\BB{3}{k}$ denotes the three-dimensional Fano manifold of Picard rank~$1$, Fano index~$2$, and degree~$8k$;
\item $\MM{\rho}{k}$ denotes the $k$th entry in the Mori--Mukai list of three-dimensional Fano manifolds of Picard rank~$\rho$~\cite{MoriMukai1982,MoriMukai1983,MoriMukai1986,MoriMukai2003,MoriMukai2004}.  We use the ordering as in~\cite{CoatesCortiGalkinKasprzyk2016}, which agrees with the original papers of Mori--Mukai except when $\rho=4$.
\end{itemize}

\begin{rem}
It appears from Table~\ref{qfv_periods} as if the period sequences with IDs $72$ and $73$ might coincide. This is not the case. The coefficients $\alpha_8$,~$\alpha_9$, and~$\alpha_{10}$ in these cases are:\\
\begin{center}
\small
\setlength{\extrarowheight}{0.2em}
\begin{tabular}{cccc}
\toprule
Period ID  &$\alpha_{8}$&$\alpha_{9}$&$\alpha_{10}$\\
\midrule
\oddrow \hyperlink{ps:72}{$72$}&$32830$&$212520$&$1190952$\\
\evnrow \hyperlink{ps:73}{$73$}&$32830$&$227640$&$1190952$\\
\bottomrule \\
\end{tabular}
\end{center}
\end{rem}

\begin{rem}
  590 of the period sequences that we find coincide with period sequences for toric complete intersections, at least for the first 15 terms.  579 of these are realised by quiver flag zero loci that are also toric complete intersections.  For the remaining 11 cases -- period sequences with IDs 17, 48, 73, 144, 145, 158, 191, 204, 256, 280, and 282 -- there is no model as a toric complete intersection that is also a quiver flag zero locus in codimension at most four.  In four of these cases -- with IDs 17, 48, 144, and 256 -- the toric complete intersection period sequence is realised by a smooth four-dimensional toric variety.
\end{rem}

\begin{rem}
  An earlier version of this paper omitted two of the period sequences that we find below, due to erroneous hand calculations in special cases.  In this version all computations are performed in software, in a uniform way; we believe that this makes them more likely to be correct.
\end{rem}

\subsection*{Acknowledgements}

EK was supported by the Natural Sciences and Engineering Research Council of Canada, and by the EPSRC Centre for Doctoral Training in Geometry and Number Theory at the Interface, grant number EP/L015234/1.  TC was supported by ERC Consolidator Grant number~682602 and EPSRC Programme Grant EP/N03189X/1.  AK was supported by EPSRC Fellowship grant EP/N022513/1.  The computations that underpin this work were performed on the Imperial College HPC cluster.  We thank Andy Thomas, Matt Harvey, and the Research Computing Service team at Imperial for invaluable technical assistance.

\renewcommand{\baselinestretch}{1}
\setlength{\LTcapwidth}{24cm}
\setlength{\extrarowheight}{0.2em}
\newgeometry{left=1cm,right=1cm,top=2.45cm,bottom=2.45cm}
\begin{landscape}

\end{landscape}
\newgeometry{margin=3cm}

\bibliographystyle{habbrv}
\bibliography{bibliography}

\end{document}